\documentclass[12pt]{article}

\usepackage{amsmath}
\usepackage{amsfonts}
\usepackage{amssymb}
\usepackage{graphicx, color}
\usepackage[colorlinks]{hyperref}
\usepackage{epsfig}
\usepackage{epstopdf} 
\usepackage{mathtools}
\usepackage{comment}
\usepackage[shortlabels]{enumitem}

\newtheorem{theorem}{Theorem}[section]
\newtheorem{lemma}[theorem]{Lemma}

\newtheorem{remark}[theorem]{Remark}
\newtheorem{example}[theorem]{Example}

\newtheorem{definition}[theorem]{Definition}

\newcommand{\recip}{\displaystyle\frac{1}}
\newcommand{\Frac}{\displaystyle\frac}
\newcommand{\nl}{\\[5pt]}

\newcommand{\inR}{\in\R}
\newcommand{\Int}{\displaystyle\int}
\newcommand{\R}{\mathbb{R}}
\newcommand{\CL}{\mathcal{L}}

\newcommand{\tu}{\tilde{u}}
\newcommand{\tv}{\tilde{v}}
\newcommand{\ta}{\tilde{a}}
\newcommand{\tH}{\tilde{H}}
\newcommand{\tF}{\tilde{F}}
\newcommand{\tf}{\tilde{f}}
\newcommand{\tJ}{\tilde{J}}
\newcommand{\tK}{\tilde{K}}

\newcommand{\tsigma}{\tilde{\sigma}}

\usepackage{mathrsfs}
\usepackage{array}

\usepackage{xcolor}

\begin{document}
\title{Quantitative Stability of Generalized $p$-Area Minimizing Surfaces}

\author{ 
    \qquad Amir Moradifam\footnote{Department of Mathematics, University of California, Riverside, California, USA. E-mail: amirm@ucr.edu. Amir Moradifam is supported by NSF grants DMS-1715850 and DMS-1953620.}
    \qquad Gerardo Orozco-Fernandez \footnote{Department of Mathematics, University of California, Riverside, California, USA. E-mail: goroz003@ucr.edu.   }
}

\date{\today}
\smallbreak \maketitle

\begin{abstract}
We study the stability of $p$-area minimizing surfaces in the Heisenberg group under perturbations of the weight function and the drift vector field in generalized least gradient problems of the form
\[
\inf_{w\in BV_0(\Omega)}
\int_\Omega \left(a(x)|Dw+F(x)|+H(x)w\right)\,dx.
\]
Owing to the lack of strict convexity, establishing stability of minimizers is challenging. We derive quantitative stability estimates for minimizers. In particular, under suitable nondegeneracy and geometric assumptions, we obtain $L^1$ and $W^{1,1}$ stability estimates with respect to perturbations of the weight function $a$ and the drift vector field $F$. We further establish unified quantitative stability estimates under simultaneous perturbations of all principal parameters, namely $a$, $F$, and $H$. Numerical simulations illustrating the stability theory are also presented.
\end{abstract}

\section{Introduction}

Least gradient problems and variational models with linear growth have been extensively studied in geometric analysis, the calculus of variations, and inverse problems. Classical least gradient problems arise in the study of functions minimizing the total variation functional and are closely connected to minimal surfaces, geometric measure theory, and free discontinuity problems \cite{G,sternberg_ziemer92,sternberg_ziemer,sternbergZiemer93,MRD}. In recent years, considerable attention has also been devoted to generalized least gradient problems involving anisotropic energies, weighted functionals, and drift terms \cite{AB,And4,Mazon2,DLPT,Gorny}. Such problems naturally appear in conductivity imaging, free boundary problems, and sub-Riemannian geometry.

In the Heisenberg group, minimizers of the $p$-area functional provide a natural analogue of minimal surfaces in Euclidean geometry and play a fundamental role in pseudohermitian and sub-Riemannian analysis \cite{CHMY,CHP,P,FSS,GN}. The associated Euler--Lagrange equation is highly degenerate and exhibits substantial analytical and geometric difficulties. Questions concerning existence, uniqueness, regularity, and geometric structure of $p$-area minimizing surfaces have been studied extensively in \cite{CH,CHY,LM,MR,MR2,PSTV}. In particular, existence and structural properties of minimizers for generalized $p$-area functionals were established in \cite{LM,MR,MR2}, while uniqueness and geometric aspects were investigated in \cite{CH,CHY}.

A closely related class of variational problems appears in conductivity imaging and hybrid inverse problems, where minimizers of weighted least gradient functionals arise naturally in the reconstruction of anisotropic conductivities from interior measurements \cite{HMN,JMN,MNT,MNTa_SIAM,NTT07,NTT08,NTT10,NTT11,ST}. In these problems, the geometry of level sets and the associated current density vector fields play a central role. Motivated by these connections, several works have developed duality and geometric approaches for least gradient type problems \cite{Mo,Mo1,JMN}.

In our earlier work \cite{MO}, we initiated a quantitative stability theory for $p$-area minimizing surfaces in the Heisenberg group with respect to perturbations of the prescribed $p$-mean curvature $H$. More precisely, for the variational problem
\begin{equation}\label{eq:intro1}
\inf_{w\in BV_0(\Omega)}
\int_\Omega \left(a(x)|Dw+F(x)|+H(x)w\right)\,dx,
\end{equation}
we established quantitative estimates describing the dependence of minimizers on perturbations of the forcing term $H$. The main idea in \cite{MO} was to exploit a Rockafellar--Fenchel duality framework in order to associate to every minimizer a canonical flux vector field
\[
J=\sigma(Du+F),
\qquad
\sigma=\frac{a}{|Du+F|},
\]
satisfying
\[
\nabla\cdot J=H
\]
in the sense of distributions. Stability estimates for the flux variable were then transferred to the minimizers themselves through a geometric analysis of level sets combined with the coarea formula. Under suitable structural assumptions on the level sets, quantitative $L^1$ and $W^{1,1}$ stability estimates were obtained.

The purpose of the present paper is to continue and extend the stability theory developed in \cite{MO}. Here we investigate the dependence of minimizers on perturbations of the remaining structural parameters of the model, namely the weight function $a$ and the drift vector field $F$, and ultimately obtain combined stability estimates with respect to simultaneous perturbations of $a$, $F$, and $H$.

More precisely, we study minimizers of \eqref{eq:intro1} in dimensions $n=2,3$, where $\Omega\subset\mathbb{R}^n$ is a bounded Lipschitz domain, the weight $a$ satisfies suitable nondegeneracy assumptions, and $F$ is a prescribed vector field. As in \cite{MO}, the analysis is carried out at the level of the associated flux vector field
\[
J=\sigma(Du+F),
\]
which allows one to separate the variational and geometric aspects of the problem.

We first establish quantitative stability estimates for the flux vector field and minimizers with respect to perturbations of the weight $a$. Under natural boundedness and admissibility assumptions, we prove estimates of the form
\[
\|J-\tilde J\|_{L^1(\Omega)}
\leq
C\|a-\tilde a\|_{L^\infty(\Omega)}^{1/2}.
\]
together with corresponding $L^1$ stability estimates for admissible minimizers. Under additional regularity assumptions and geometric hypotheses on the structure of level sets, we further derive $W^{1,1}$ stability estimates with rate $1/4$.

We then turn to perturbations of the drift vector field $F$. In this setting, the geometry of the level sets interacts directly with the perturbation of the underlying vector field, leading to a more delicate stability mechanism. Assuming that the drift fields are conservative, we establish quantitative stability estimates in terms of $\|F-\tilde F\|_{L^1(\Omega)}$ for the associated fluxes, minimizers, and the corresponding coefficients $\sigma$ and $\tilde\sigma$. The proofs combine duality arguments, weak integration by parts formulas, and geometric estimates along foliations of level sets.

Finally, combining the results obtained here with the stability theory established in \cite{MO}, we derive quantitative stability estimates under simultaneous perturbations of all principal parameters of the model. In particular, we obtain unified $L^1$ and $W^{1,1}$ stability estimates for admissible minimizers with respect to perturbations of $a$, $F$, and $H$.

One of the main features of our approach is that the stability theory is developed through the flux variable rather than directly at the level of minimizers. This viewpoint provides a unified framework which naturally connects variational structure, duality methods, and geometric properties of level sets. The analysis also highlights the role played by admissibility and geometric regularity assumptions in transferring stability from the flux variable to the minimizers themselves.

In addition to the theoretical results, we present numerical simulations based on the convergent algorithm developed in \cite{CMO}. The simulations illustrate the stability of minimizers under noisy perturbations of the data and are consistent with the analytical estimates established in the paper.

The paper is organized as follows. In Section 2 we establish stability results with respect to perturbations of the weight function $a$, including quantitative estimates for the flux vector field, minimizers, and the coefficient $\sigma$. In Section 3 we study stability with respect to perturbations of the drift vector field $F$. Section 4 combines the results of this paper with those obtained in \cite{MO} in order to derive unified stability estimates under simultaneous perturbations of $a$, $F$, and $H$. The final subsection presents numerical simulations illustrating the stability theory.

\section{Stability with respect to the weight $a$}

Let $\Omega \subset \R^n$ be a connected, bounded, open set with Lipschitz boundary, and consider the variational problem
\begin{equation}\label{mainmin0}
    \inf_{w\in H^1_0(\Omega)} \int_\Omega \left(a|Dw+F|+Hw\right).
\end{equation}
More generally, minimization over the affine space
\[
H^1_f(\Omega):=\{w\in H^1(\Omega): w=f \ \text{on}\ \partial\Omega\}
\]
can be reduced to \eqref{mainmin0} through the change of variables $w=w_0+f$, where $w_0\in H^1_0(\Omega)$ and $f\in H^1(\Omega)\cap H^1_f(\Omega)$.

As in \cite{MO}, our analysis is based on the existence of an associated vector field $J$ that determines the direction of $Du+F$ in $\Omega$ for every minimizer $u$. By Rockafellar--Fenchel duality \cite{ET}, the dual problem corresponding to \eqref{mainmin0} takes the form
\begin{equation}
   (D)\hspace{0.5cm}
   \sup \left\{\langle F,b\rangle:\ b\in \mathcal{D}_0 \ \text{and}\ |b|\leq a \ \text{a.e. in}\ \Omega \right\},
\end{equation}
where
\begin{equation}
    \mathcal{D}_0:=
    \left\{
    b\in (L^2(\Omega))^n:
    \int_{\Omega} \nabla u\cdot b + Hu =0,
    \ \text{for all}\ u\in H^1_0(\Omega)
    \right\}.
\end{equation}

Observe that $b\in (L^\infty(\Omega))^n\cap \mathcal{D}_0$ if and only if
\[
\nabla\cdot b = H
\qquad \text{a.e. in}\ \Omega.
\]
We refer the reader to Section 3.1 of \cite{MR} for further details.

\begin{theorem}\label{Structure1}
\cite{MR}
Let $\Omega$ be a bounded domain in $\R^n$, let $F,H\in L^2(\Omega)$, and let $a\in L^2(\Omega)$ be positive. Assume that \eqref{mainmin0} is bounded from below. Then the duality gap is zero and the dual problem $(D)$ admits a solution. In particular, there exists a vector field $J\in \mathcal{D}_0$ satisfying $|J|\leq a$ almost everywhere in $\Omega$ such that
\begin{equation}\label{dualityGap2}
\inf_{u\in H^1_0(\Omega)}
\int_{\Omega}
\left(a|Du+F|+Hu\right)\,dx
=
\langle F,J\rangle.
\end{equation}
Moreover, for every minimizer $u$ of \eqref{mainmin0},
\begin{equation}\label{directionParallel2}
a\,\frac{Du+F}{|Du+F|}
=
J,
\qquad
|Du+F|\text{-a.e. in }\Omega.
\end{equation}
\end{theorem}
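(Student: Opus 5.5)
The plan is to apply the Rockafellar--Fenchel duality theorem of \cite{ET} directly to the primal problem \eqref{mainmin0}, and then to read off the alignment \eqref{directionParallel2} from the extremality relations.

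\textbf{Setting up the duality.} Write the primal functional as $\Phi(\Lambda u)+G(u)$, where:
\begin{itemize}
\item $\Lambda=\nabla:H^1_0(\Omega)\to (L^2(\Omega))^n$;
\item $G(u)=\int_\Omega Hu$ is continuous and linear on $H^1_0(\Omega)$;
\item $\Phi(p)=\int_\Omega a|p+F|$ is convex and finite everywhere on $(L^2)^n$, since $a\in L^2$ and $F\in L^2$. It is Lipschitz: $|\Phi(p)-\Phi(q)|\le \|a\|_{L^2}\|p-q\|_{L^2}$.
\end{itemize}
Because $\Phi$ is continuous at $\Lambda 0=0$ and the infimum is assumed finite, the standard qualification condition of \cite{ET} (Ch.~III, Thm.~4.1) holds. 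Hence
\[
\inf(P)=\max(P^*)
\]
with the maximum attained.

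\textbf{Computing the conjugates.} For the linear term, $G^*(\Lambda^*b)$ is $0$ if $-\Lambda^* b=H$ in $H^{-1}$ and $+\infty$ otherwise. Up to the sign convention $b\mapsto -b$, this constraint is exactly $b\in\mathcal{D}_0$.

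For the other term, a pointwise computation gives
\[
\Phi^*(b)=\sup_p \int \big(b\cdot p - a|p+F|\big)=-\int b\cdot F
\]
when $|b|\le a$ a.e., and $+\infty$ otherwise. This value is obtained by substituting $q=p+F$ and using that $\sup_q\,(b\cdot q-a|q|)$ is $0$ or $+\infty$; measurable selection justifies the pointwise sup. Assembling the signs, the dual problem is $\sup\{\langle F,b\rangle : b\in\mathcal{D}_0,\ |b|\le a\}$, and a maximizer $J$ gives \eqref{dualityGap2}.

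\textbf{Alignment for minimizers.} Let $u$ be any minimizer. Then
\[
\int a|Du+F|+Hu=\langle F,J\rangle .
\]
Since $J\in\mathcal{D}_0$, we have $\int Hu=-\int Du\cdot J$. Substituting,
\[
\int_\Omega \big(a|Du+F|-J\cdot(Du+F)\big)\,dx=0 .
\]
The integrand is nonnegative because $|J|\le a$, so it vanishes a.e. Equality in Cauchy--Schwarz together with $|J|\le a$ then forces $J=a\frac{Du+F}{|Du+F|}$ wherever $Du+F\neq0$, i.e. $|Du+F|$-a.e.

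\textbf{Main obstacle.} I expect the delicate step to be the sign and space bookkeeping in the conjugate computation. In particular, one must check that the constraint set coming from $G^*$ coincides exactly with $\mathcal{D}_0$ under the paper's sign convention, possibly after replacing $J$ by $-J$. The pointwise conjugate formula must also be justified via the interchange of sup and integral for normal integrands.
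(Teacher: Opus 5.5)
Your proposal is correct and follows the route the paper indicates for this cited result, namely Rockafellar--Fenchel duality in the form of Ekeland--Temam with $\Lambda=\nabla$, giving the dual constraint set $\{|b|\le a,\ \nabla\cdot b=H\}$ after the substitution $b=-p^*$, with the alignment \eqref{directionParallel2} read off from $\int_\Omega \big(a|Du+F|-J\cdot(Du+F)\big)\,dx=0$. The two points you flag are unproblematic: the signs do match $\mathcal{D}_0$ under $J=-p^*$, and the conjugate of $\Phi$ needs no measurable selection, since testing with $q=t\chi_E\, b/|b|$ on $E=\{|b|>a\}$ already gives $+\infty$.
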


Observe that $u$ satisfies the Euler--Lagrange equation
\begin{equation}\label{EL}
    \nabla\cdot\left(a\frac{Du+F}{|Du+F|}\right)=H,
    \qquad
    u|_{\partial\Omega}=0.
\end{equation}
Moreover, there exists a vector field $J$, independent of the particular minimizer $u$, such that
\[
a\frac{Du+F}{|Du+F|}
=
J,
\qquad
|Du+F|\text{-a.e. in }\Omega,
\]
with
\[
\nabla\cdot J = H.
\]
Furthermore, one may write
\[
J(x)=\sigma(x)(Du(x)+F(x)),
\qquad
|J|=a,
\qquad
|Du+F|\text{-a.e. in }\Omega,
\]
for some measurable function $\sigma\geq 0$. Consequently,
\[
a
=
|J|
=
\sigma|Du+F|
=
J\cdot\frac{Du+F}{|Du+F|},
\qquad
|Du+F|\text{-a.e. in }\Omega.
\]
Likewise, if $\tilde u$ is a minimizer of
\begin{equation}\label{mainmin1}
    \inf_{w\in BV_0(\Omega)}
    \int_\Omega
    \left(\ta|Dw+F|+H\,w\right),
\end{equation}
where $\ta$ is a perturbation of $a$, then the corresponding quantities $\tilde J$ and $\tilde \sigma$ can be defined analogously.

\begin{remark}
    Note that the primal problem \eqref{mainmin0} may not have a miimizer, but the dual problem (D) always has a minimizer. Throughout the paper we shall assume that \eqref{mainmin0} has a solution. For the existence of minimizers of weighted least gradient problems we refer to \cite{MR, MR2}
\end{remark}

\subsection{Stability of the vector field $J$}

    In this section, we show that the structure of the level sets of minimizers of the least gradient problem \eqref{mainmin0} is stable. The results generalize and are inspired by our earlier work in \cite{LM,MO}. Throughout the paper, we will assume that $a,\ta\in L^\infty(\Omega)$ and
    \begin{equation}\label{mbounds}
        0<m\leq a(x),\ta(x)\leq M,\,\,x\in\Omega,
    \end{equation}
    for some positive constants $m,M.$

    It is well known (see \cite{G,MB}) that there exists a constant $C_\Omega>0$, depending only on $\Omega$, such that
\[
\|u\|_{L^1(\Omega)}
\leq
C_\Omega \int_\Omega |Du|,
\qquad
\text{for all } u\in BV_0(\Omega).
\]
We will make use of the following lemmas.

\begin{lemma}\label{lem:a priori bound}
Assume that $F\in L^1(\Omega)$ satisfies
\[
\|F\|_{L^1(\Omega)}\leq k_1
\]
for some constant $k_1>0$, and suppose that
\begin{equation}\label{Hbound}
\|H\|_{L^\infty(\Omega)}
<
\frac{m}{C_\Omega}.
\end{equation}
Then there exists a constant $C$, depending only on $\Omega$, $m$, and $k_1$, such that
\begin{equation}\label{uppbound}
\max
\left\{
\|Du+F\|_{L^1(\Omega)},
\|D\tilde u+F\|_{L^1(\Omega)}
\right\}
\leq C,
\end{equation}
where $u$ and $\tilde u$ are minimizers of \eqref{mainmin0} and \eqref{mainmin1}, respectively. Moreover, the variational problem \eqref{mainmin1} is bounded from below.
\end{lemma}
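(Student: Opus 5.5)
The plan is to compare the energy of the minimizer $u$ with the energy of the admissible competitor $w\equiv 0\in BV_0(\Omega)$. Set $I_a(w)=\int_\Omega (a|Dw+F|+Hw)$, with $I_{\ta}$ defined analogously. Minimality of $u$ gives
\[
I_a(u)\leq I_a(0)=\int_\Omega a|F|\leq M k_1 .
\]
A lower bound is obtained from \eqref{mbounds} and the Poincaré-type inequality $\|w\|_{L^1}\leq C_\Omega\int_\Omega|Dw|$. Writing $Dw=(Dw+F)-F$, we have $\int_\Omega|Dw|\leq \|Dw+F\|_{L^1}+k_1$, and hence
\[
\Bigl|\int_\Omega Hw\Bigr|\leq \|H\|_{L^\infty}C_\Omega\bigl(\|Dw+F\|_{L^1}+k_1\bigr).
\]
Put $\theta:=\|H\|_{L^\infty}C_\Omega<m$, which is exactly where \eqref{Hbound} enters. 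Then for every $w\in BV_0(\Omega)$,
\[
I_a(w)\geq (m-\theta)\|Dw+F\|_{L^1(\Omega)}-\theta k_1\geq -\theta k_1\geq -m k_1 .
\]
The same computation with $\ta$ in place of $a$ uses only $\ta\geq m$. It therefore shows that \eqref{mainmin1} is bounded from below by $-mk_1$, which is the last assertion.

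Combining the two bounds at $w=u$ gives
\[
(m-\theta)\|Du+F\|_{L^1}\leq Mk_1+mk_1 .
\]
The identical argument at $w=\tu$, using $I_{\ta}(\tu)\leq I_{\ta}(0)\leq Mk_1$, gives the same bound for $\|D\tu+F\|_{L^1}$. This yields \eqref{uppbound} with $C=(M+m)k_1/(m-\theta)$.

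The main obstacle is that this constant depends on $M$ and on the gap $m-\|H\|_\infty C_\Omega$, not only on $\Omega$, $m$, $k_1$ as the statement asserts. I would first try to absorb these dependences by treating $M$ and $H$ as fixed data of the problem. If the statement must hold literally, I would look for a sharper upper comparison that does not bound $\int a|F|$ by $Mk_1$. Short of that, I would state the dependence on $M$ and on the gap explicitly, since the gap is a fixed positive number under \eqref{Hbound}. One also has to interpret $|Dw+F|$ as a measure when $w\in BV$, but the triangle-inequality steps above hold verbatim for the total variation measure.
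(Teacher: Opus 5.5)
Your proof is correct and is essentially the paper's argument. Both compare with the competitor $w\equiv 0$ to get $I(u)\le Mk_1$, and both combine the Poincar\'e inequality $\|w\|_{L^1}\le C_\Omega\int_\Omega|Dw|$ with \eqref{Hbound} to get a coercive lower bound that also shows \eqref{mainmin1} is bounded below. The paper bounds $\|Du\|_{L^1}$ first and then adds $\|F\|_{L^1}$, whereas you bound $\|Du+F\|_{L^1}$ directly; that difference is cosmetic.

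Your caveat about the constant is accurate and is not a defect of your proof. The paper's own bound $\|Du\|_{L^1}\le (C_1+m\|F\|_{L^1})/(m-C_\Omega\|H\|_{L^\infty})$, with $C_1\le Mk_1$, also depends on $M$ and on the gap $m-C_\Omega\|H\|_{L^\infty}$.
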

{\bf Proof.} We have 
\allowdisplaybreaks
\begin{align*}
    \|Du\|_{L^1(\Omega)}-\|F\|_{L^1(\Omega)}
    &\leq\int_\Omega|Du+F|\,dx\nl
    &\leq\recip{m}\int_\Omega a|Du+F|\,dx\nl
    &=\recip{m}\int_\Omega a|Du+F|+Hu\,dx-\recip{m}\int_\Omega Hu\,dx\nl
    &\leq\frac{C_1}{m}+\recip{m}\|H\|_{L^\infty(\Omega)}\|u\|_{L^1(\Omega)}\nl
    &\leq\frac{C_1}{m}+\frac{C_\Omega}{m}\|H\|_{L^\infty(\Omega)}\|Du\|_{L^1(\Omega)}
\end{align*}
where $C_1=\left|\Int_{\Omega}a|Du+F|+Hu\,dx\right|.$ Note that $C_1$ is finite since
\begin{align*}
    Mk_1
    &\geq\int_{\Omega}a|F|\,dx\nl
    &=\Int_{\Omega}a|Dw+F|+Hw\,dx \bigg|_{w\equiv0}\nl
    &\geq\Int_{\Omega}a|Du+F|+Hu\,dx\nl
    &\geq \Int_{\Omega}m|Du|-m|F|+Hu\,dx\nl
    &\geq m\|Du\|_{L^1(\Omega)}-mk_1-\|H\|_{L^\infty(\Omega)}\|u\|_{L^1(\Omega)}\nl
    &\geq \left(m-C_\Omega\|H\|_{L^\infty(\Omega)}\right)\|Du\|_{L^1(\Omega)}-mk_1\nl
    &\geq -mk_1.
\end{align*}
 We conclude that
 $$
    \|Du\|_{L^1(\Omega)}
    \leq\frac{C_1+m\|F\|_{L^1(\Omega)}}{m-C_\Omega\|H\|_{L^\infty(\Omega)}}.
$$
Thus, $\|Du+F\|_{L^1(\Omega)}$ and $\|u\|_{L^1(\Omega)}$ are bounded above by a constant and, by a similar argument, so is $\|D\tu+F\|_{L^1(\Omega)},$ and the proof is complete. \hfill $\Box$

\begin{lemma}\label{lemma a1}
Under the assumptions of Lemma \ref{lem:a priori bound}, there exists a constant
\[
C=C(\Omega,m,k_1),
\]
independent of $u$ and $\tilde u$, such that
\[
\left|
\int_\Omega
\left(a|Du+F|+Hu\right)\,dx
-
\int_\Omega
\left(\tilde a|D\tilde u+F|+H\tilde u\right)\,dx
\right|
\leq
C\|a-\tilde a\|_{L^\infty(\Omega)}.
\]
\end{lemma}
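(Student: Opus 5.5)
We argue by comparing the two minimum values through competitor substitution, using the minimality of $u$ for \eqref{mainmin0} and of $\tilde u$ for \eqref{mainmin1}. Write
\[
I(w)=\int_\Omega \left(a|Dw+F|+Hw\right)dx,\qquad \tilde I(w)=\int_\Omega \left(\tilde a|Dw+F|+Hw\right)dx .
\]
For any admissible $w$ we have the pointwise identity $I(w)-\tilde I(w)=\int_\Omega (a-\tilde a)|Dw+F|\,dx$. Consequently
\[
|I(w)-\tilde I(w)|\leq \|a-\tilde a\|_{L^\infty(\Omega)}\,\|Dw+F\|_{L^1(\Omega)} .
\]

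The key steps, in order, are as follows. First, since $u$ minimizes $I$,
\[
I(u)\leq I(\tilde u)=\tilde I(\tilde u)+\int_\Omega (a-\tilde a)|D\tilde u+F|\,dx\leq \tilde I(\tilde u)+\|a-\tilde a\|_{L^\infty(\Omega)}\|D\tilde u+F\|_{L^1(\Omega)} .
\]
Second, symmetrically, since $\tilde u$ minimizes $\tilde I$,
\[
\tilde I(\tilde u)\leq \tilde I(u)\leq I(u)+\|a-\tilde a\|_{L^\infty(\Omega)}\|Du+F\|_{L^1(\Omega)} .
\]
Combining the two inequalities gives
\[
|I(u)-\tilde I(\tilde u)|\leq \|a-\tilde a\|_{L^\infty(\Omega)}\max\left\{\|Du+F\|_{L^1(\Omega)},\|D\tilde u+F\|_{L^1(\Omega)}\right\}.
\]
Third, we invoke Lemma \ref{lem:a priori bound}, whose bound \eqref{uppbound} controls this maximum by a constant $C=C(\Omega,m,k_1)$. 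This yields the claim.

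One technical point concerns the competitor spaces. Problem \eqref{mainmin0} is posed over $H^1_0$ while \eqref{mainmin1} is posed over $BV_0$. Swapping the minimizers is legitimate if both problems are read over the same class, e.g.\ $BV_0(\Omega)$ with $|Dw+F|$ interpreted as a measure. Alternatively, one can approximate a $BV_0$ competitor by $H^1_0$ functions with convergence of the weighted total variation. This is a standard strict-convergence/Anzellotti-type approximation for continuous weights, and for merely bounded $a$ we simply assume both problems are posed on a common space. I would state this convention explicitly.

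The main obstacle is making sure the constant in the a priori bound is genuinely independent of the particular weights. This means checking that it depends only on $\Omega,m,k_1$ (and on the fixed bound \eqref{Hbound}), rather than on $M$ or on the specific values of $I(u)$. In the proof of Lemma \ref{lem:a priori bound} the constant $C_1$ is controlled by $Mk_1$, so $M$ may quietly enter. I would therefore either track that dependence and allow $C$ to depend on $M$ as well, or reuse the lower/upper chain there to bound $C_1\leq Mk_1+mk_1$.
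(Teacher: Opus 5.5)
Your argument is correct and is the paper's proof. Testing each minimizer in the other functional gives $\int_\Omega (a-\tilde a)|Du+F|\,dx \le I(u)-\tilde I(\tilde u)\le \int_\Omega (a-\tilde a)|D\tilde u+F|\,dx$, and the bound \eqref{uppbound} of Lemma \ref{lem:a priori bound} then finishes. Your side remarks are also accurate and go beyond what the paper checks. The constant genuinely depends on $M$ (through $C_1\le Mk_1$) and on the gap $m-C_\Omega\|H\|_{L^\infty(\Omega)}$. The paper also silently treats the $H^1_0$ and $BV_0$ formulations as interchangeable when it swaps the minimizers.
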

{\bf Proof.}
Arguing as in the proof of Lemma 2.4 in \cite{MO},
\begin{align*}
    \Int_\Omega (a-\ta)|Du+F|\,dx
    &\leq\Int_\Omega a|Du+F|+Hu\,dx-\Int_\Omega \ta|D\tu+F|+H\tu\,dx\nl
    &\leq\Int_\Omega (a-\ta)|D\tu+F|\,dx
\end{align*}
and thus,
    \begin{align*}
    &\left|
        \Int_\Omega a|Du+F|-\ta|D\tu+F|+H(u-\tu)\,dx
    \right|\nl
    &\hspace{0.05in}\leq
    \max
    \left\{
        \|Du+F\|_{L^1(\Omega)},\|D\tu+F\|_{L^1(\Omega)}
    \right\}
    \|a-\ta\|_{L^\infty(\Omega)}.
\end{align*} \hfill $\Box$

Let $\nu_\Omega$ denote the outer unit normal vector to $\partial\Omega.$ Then for every $T\in\left(L^\infty(\Omega)\right)^n$ with $\nabla\cdot T\in L^n(\Omega),$ there exists a unique function $[T,\nu_\Omega]\in L^\infty(\partial\Omega)$ such that
\begin{equation}\label{wibp}
    \int_{\partial\Omega}[T,\nu_\Omega]u\,d\mathcal{H}^{n-1}=\int_\Omega u\nabla\cdot T\,dx+\int_\Omega T\cdot Du\,dx,\hspace{0.1in}u\in C^1(\bar\Omega).
\end{equation}
Moreover, for $u\in BV(\Omega)$ and $T\in\left(L^\infty(\Omega)\right)^n$ with $\nabla\cdot T\in L^n(\Omega),$ the linear functional $u\mapsto (T\cdot Du)$ gives rise to a Radon measure on $\Omega$ and \eqref{wibp} holds for all $u\in BV(\Omega)$ (see \cite{GA1,GA2}).

\begin{lemma}\label{mainlemma} Suppose that $0\leq\sigma(x)\leq\sigma_1$ for some constant $\sigma_1>0.$ Then
$$
    \int_\Omega |J||\tJ|-J\cdot\tJ\,dx\leq C\|a-\ta\|_{L^\infty(\Omega)},
$$
where $C=C(\Omega, m,k_1,\sigma_1)$ is a constant independent of $u$ and $\tu.$
\end{lemma}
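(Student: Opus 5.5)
We follow the strategy of Lemma 2.4 in \cite{MO}: use the weak integration by parts formula \eqref{wibp} together with the divergence constraint $\nabla\cdot J=\nabla\cdot\tJ=H$ and the zero duality gap of Theorem \ref{Structure1}. The idea is to pair $\tJ$ against $D u+F$ and compare the result with the optimal values of both problems.

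\textbf{Step 1: reduce to a pairing.} Since $J=\sigma(Du+F)$ and $|J|=a$ on the support of $|Du+F|$, we have
\[
\int_\Omega |J||\tJ|-J\cdot\tJ\,dx=\int_\Omega \sigma\left(|\tJ||Du+F|-\tJ\cdot(Du+F)\right)dx .
\]
The bracket is nonnegative because $|\tJ|\le\ta$. Using $0\le\sigma\le\sigma_1$ and $|\tJ|\le \ta$, we obtain
\[
\int_\Omega |J||\tJ|-J\cdot\tJ\,dx\le \sigma_1\int_\Omega\left(\ta|Du+F|-\tJ\cdot(Du+F)\right)dx .
\]
Off the support of $|Du+F|$ we take $J=0$ (or $\sigma=0$), so that region contributes nothing.

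\textbf{Step 2: evaluate $\int\tJ\cdot(Du+F)$.} Since $u\in BV_0(\Omega)$ and $\nabla\cdot\tJ=H\in L^\infty$, formula \eqref{wibp} applies, and the boundary term vanishes. Hence $\int_\Omega \tJ\cdot Du=-\int_\Omega Hu$.

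By duality (Theorem \ref{Structure1}) for the perturbed problem, $\langle F,\tJ\rangle=\int_\Omega \ta|D\tu+F|+H\tu$. Therefore
\[
\int_\Omega \tJ\cdot(Du+F)=-\int_\Omega Hu+\int_\Omega\left(\ta|D\tu+F|+H\tu\right).
\]
Substituting this into Step 1 gives
\[
\int_\Omega\left(\ta|Du+F|-\tJ\cdot(Du+F)\right)=\int_\Omega\left(\ta|Du+F|+Hu\right)-\int_\Omega\left(\ta|D\tu+F|+H\tu\right).
\]

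\textbf{Step 3: estimate.} Write $\ta|Du+F|=a|Du+F|+(\ta-a)|Du+F|$. The difference in Step 2 is then at most
\[
\left|\int_\Omega\left(a|Du+F|+Hu\right)-\int_\Omega\left(\ta|D\tu+F|+H\tu\right)\right|+\|a-\ta\|_{L^\infty}\|Du+F\|_{L^1}.
\]
Lemma \ref{lemma a1} bounds the first term by $C\|a-\ta\|_{L^\infty}$. Lemma \ref{lem:a priori bound} bounds the second by $C\|a-\ta\|_{L^\infty}$. Multiplying by $\sigma_1$ yields the claim with $C=C(\Omega,m,k_1,\sigma_1)$.

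\textbf{Main obstacle.} The delicate point is justifying Step 2 when $u\in BV$ rather than $H^1$. Here the measure $\tJ\cdot Du$ must be interpreted through the Anzellotti pairing. In Step 1 one must also identify $\sigma\,\tJ\cdot(Du+F)$ with that pairing on the singular part of $Du$.

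We handle this by applying Step 1 with the measures $|Du+F|$ and $(\tJ\cdot Du)$ directly. The inequality $(\tJ\cdot Du)\le|\tJ||Du|$ holds as measures, so $\ta|Du+F|-(\tJ\cdot(Du+F))$ is a nonnegative measure. The factor $\sigma$ is then bounded by $\sigma_1$ as before. The boundary trace term vanishes because $u$ has zero trace.
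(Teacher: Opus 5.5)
Your proof is correct and follows the paper's argument. You bound the integrand by $\sigma_1\bigl(\ta|Du+F|-\tJ\cdot(Du+F)\bigr)$, evaluate $\int_\Omega\tJ\cdot(Du+F)$ via the weak integration by parts formula \eqref{wibp}, and split $\ta|Du+F|=a|Du+F|+(\ta-a)|Du+F|$ before invoking Lemma \ref{lemma a1} and Lemma \ref{lem:a priori bound}. The only difference is cosmetic: you get $\langle F,\tJ\rangle=\int_\Omega(\ta|D\tu+F|+H\tu)$ directly from the zero duality gap \eqref{dualityGap2}, whereas the paper also integrates by parts against $\tu$ and uses the alignment $\tJ\cdot(D\tu+F)=\ta|D\tu+F|$ from \eqref{directionParallel2}, which yields the same identity.
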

{\bf Proof.}
Note that by the weak integration by parts formula,
\begin{align*}
    \int_\Omega u\nabla\cdot\tJ+\tilde J\cdot Du\,dx
    =\int_{\partial\Omega}[\tJ,\nu_\Omega]u\,dx
    =0
    =\int_{\partial\Omega}[\tJ,\nu_\Omega]\tu\,dx
    =\int_\Omega \tu\nabla\cdot\tJ+\tJ\cdot D\tu\,dx
\end{align*}
and thus,
$$
    \int_\Omega Hu+\tJ\cdot Du\,dx=\int_\Omega H\tu+\tJ\cdot D\tu\,dx.
$$
Similar to the proof of Lemma 2.5 in \cite{MO},
\allowdisplaybreaks
    \begin{align*}
        \int_\Omega |J||\tJ|-J\cdot\tJ\,dx
        &\leq\sigma_1\int_\Omega (\ta-a)|Du+F|\,dx\nl
        &\hspace{0.6in}+\sigma_1\int_{\Omega}a|Du+F|+Hu-\ta|D\tu+F|-H\tu\,dx\nl
        &\leq \sigma_1\big(\|Du+F\|_{L^1(\Omega)}+C\big)\|a-\ta\|_{L^\infty(\Omega)}
    \end{align*}
    where $C=C(\Omega,m,k_1)$ is the constant obtained in Lemma \ref{lemma a1}. By Lemma \ref{lem:a priori bound}, the proof is complete.\hfill $\Box$

Now we are ready to state the main result of this subsection. 
\begin{theorem}\label{thm a1}
    Suppose that $0\leq\sigma(x)\leq\sigma_1$ for some constant $\sigma_1>0.$ Then
$$
    \|J-\tJ\|_{L^1(\Omega)}\leq C\|a-\ta\|_{L^\infty(\Omega)}^{\frac{1}{2}},
$$
where $C=C(\Omega, m,k_1,\sigma_1)$ is a constant independent of $u$ and $\tu.$
\end{theorem}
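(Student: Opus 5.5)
The plan is to turn Lemma \ref{mainlemma}, which controls $\int_\Omega(|J||\tJ|-J\cdot\tJ)\,dx$, into an $L^1$ bound on $J-\tJ$. We use the pointwise identity
\[
|J-\tJ|^2=(|J|-|\tJ|)^2+2\big(|J||\tJ|-J\cdot\tJ\big),
\]
valid a.e. in $\Omega$. Each term on the right is nonnegative, since $|J||\tJ|-J\cdot\tJ\ge 0$ by Cauchy--Schwarz. This splits $\|J-\tJ\|_{L^1}$ into a part coming from the lengths and a part coming from the angle. We handle each part separately and then combine them.

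\emph{Angle part.} By Cauchy--Schwarz in $L^2$ and Lemma \ref{mainlemma},
\[
\int_\Omega \big(2(|J||\tJ|-J\cdot\tJ)\big)^{1/2}dx\le |\Omega|^{1/2}\Big(2\int_\Omega |J||\tJ|-J\cdot\tJ\,dx\Big)^{1/2}\le C\|a-\ta\|_{L^\infty}^{1/2}.
\]

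\emph{Length part.} We need to control $\int_\Omega\big||J|-|\tJ|\big|\,dx$. From Theorem \ref{Structure1} we have $|J|=a$ on $\{|Du+F|\ne0\}$ and $|\tJ|=\ta$ on $\{|D\tu+F|\ne0\}$. On the set where both hold, $\big||J|-|\tJ|\big|\le\|a-\ta\|_{L^\infty}$, which is better than the claimed rate. The difficulty is the degenerate region where $Du+F=0$ or $D\tu+F=0$. There we only know $|J|\le a$ and $|\tJ|\le\ta$, and we cannot assume $|J|=a$.

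This step is the main obstacle, and I would handle it through the structure $J=\sigma(Du+F)$ with $0\le\sigma\le\sigma_1$. This representation means $J=0$ wherever $Du+F=0$. Then on $\{Du+F=0\}$ we have $|J-\tJ|=|\tJ|$. Two cases arise.

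If $D\tu+F\neq0$ at such points, then $|\tJ|=\ta$, and we want to show this set is small. For this we use the energy comparison of Lemma \ref{lemma a1} together with the duality identity $\int_\Omega(\tJ\cdot(Du+F)+Hu)\,dx=\langle F,\tJ\rangle$. Since $\langle F,\tJ\rangle$ equals the minimum energy of \eqref{mainmin1}, we get
\[
\int_\Omega(\ta|Du+F|-\tJ\cdot(Du+F))\,dx\le C\|a-\ta\|_{L^\infty}.
\]
This is the same mechanism as in Lemma \ref{mainlemma}. I would need to check carefully whether it also controls $\int|\tJ|$ on the degenerate set. If it does not, I would rely on the convention implicit in the paper that $J=\sigma(Du+F)$ and $\tJ=\tsigma(D\tu+F)$ hold everywhere. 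In that case $|J|=|\tJ|=0$ on the common degenerate set, and on the remaining sets $|J|=a$ and $|\tJ|=\ta$. Under that convention the length part reduces to $\|a-\ta\|_{L^\infty}|\Omega|$ plus contributions from sets where exactly one of the fields vanishes.

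\emph{Combining.} Pointwise,
\[
|J-\tJ|\le\big||J|-|\tJ|\big|+\big(2(|J||\tJ|-J\cdot\tJ)\big)^{1/2}.
\]
Integrating this and using the two bounds above gives
\[
\|J-\tJ\|_{L^1}\le C\|a-\ta\|_{L^\infty}+C\|a-\ta\|_{L^\infty}^{1/2}.
\]
We may assume $\|a-\ta\|_{L^\infty}\le 2M$, so the linear term is absorbed into the square-root term with a constant depending on $M$. This yields the claimed estimate, with $C$ depending on $\Omega,m,k_1,\sigma_1$ through Lemmas \ref{lem:a priori bound} and \ref{mainlemma}.
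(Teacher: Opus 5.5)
Your length/angle splitting is the paper's route. The paper combines Lemma \ref{mainlemma} with the argument of Theorem 2.6 in \cite{MO}: it writes $\|J-\tJ\|_{L^2}^2=\int_\Omega(|J|-|\tJ|)^2+2\int_\Omega(|J||\tJ|-J\cdot\tJ)$, uses $|J|=a$ and $|\tJ|=\ta$, and then applies Cauchy--Schwarz. Taking square roots pointwise first, as you do, is only a cosmetic difference.

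The gap is in your length part. You end it with ``plus contributions from sets where exactly one of the fields vanishes''. You never bound these, yet in the combining step you treat the whole length part as $O(\|a-\ta\|_{L^\infty})$. Nothing you wrote controls them. The integrand of your energy inequality $\int_\Omega(\ta|Du+F|-\tJ\cdot(Du+F))\,dx\le C\|a-\ta\|_{L^\infty}$ vanishes identically on $\{Du+F=0\}$, so it says nothing about $\tJ$ there. On $\{Du+F=0\}\cap\{D\tu+F\neq0\}$ your convention gives $|J-\tJ|=\ta\ge m$. That contribution is at least $m$ times the measure of this set, and your argument has no way of making that measure small.

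Two observations close the gap.

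\emph{First}, with the paper's definition $\sigma=a/|Du+F|$, the hypothesis $\sigma\le\sigma_1$ already forces $|Du+F|\ge m/\sigma_1$ a.e. Hence $\{Du+F=0\}$ is null and $|J|=a$ a.e., so the degenerate case you worried about is excluded by hypothesis.

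\emph{Second}, you need no nondegeneracy of $\tu$, and you do not need $|\tJ|=\ta$. From $|\tJ|\le\ta$ alone,
\[
|J-\tJ|^2\le a^2+\ta^2-2J\cdot\tJ=(a-\ta)^2+2\big(a\ta-J\cdot\tJ\big),\qquad a\ta-J\cdot\tJ=\sigma\big(\ta|Du+F|-\tJ\cdot(Du+F)\big)\ge0 .
\]
Multiplying your energy inequality by $\sigma_1$ therefore gives $\int_\Omega(a\ta-J\cdot\tJ)\,dx\le\sigma_1C\|a-\ta\|_{L^\infty}$. This is exactly the quantity the proof of Lemma \ref{mainlemma} passes through, since its first step replaces $|\tJ|$ by $\ta$.

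Hence $\|J-\tJ\|_{L^2}^2\le|\Omega|\,\|a-\ta\|_{L^\infty}^2+2\sigma_1C\|a-\ta\|_{L^\infty}$. Cauchy--Schwarz together with $\|a-\ta\|_{L^\infty}\le M$ then gives the claimed estimate, with no case analysis of degenerate sets.
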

{\bf Proof.}
    The proof follows from Lemma \ref{mainlemma} together with an argument similar to that used in the proof of Theorem 2.6 in \cite{MO}.
\hfill $\Box$

\subsection{$L^1$ stability of the minimizers}

In this section, we prove the stability of minimizers of the least gradient problem \eqref{mainmin0} in $L^1(\Omega)$. Since minimizers of \eqref{mainmin0} are not unique in general, additional assumptions on the weights $a$, $\tilde a$, and on the associated minimizers are required in order to obtain stability results.

\begin{definition}
Let $\sigma_0,\sigma_1>0$ be fixed constants. We say that
\[
u\in C^1(\overline{\Omega})
\]
is \emph{admissible} if $u$ is a minimizer of \eqref{mainmin0} and there exists a function $\sigma\in C(\Omega)$ satisfying
\[
0<\sigma_0\leq \sigma \leq \sigma_1,
\]
such that
\[
J=\sigma(\nabla u+F)
\]
and
\[
m\leq |J|\leq M,
\]
where the positive constants $m$ and $M$ are given in \eqref{mbounds}. The notion of admissibility for $\tilde u$ is defined analogously.
\end{definition}

As in \cite{MO}, for the remainder of the paper, we will assume that $F$ is a conservative vector field $\big($i.e. $F=\nabla f$ for some function $f\in C^1(\Omega)\big).$ Furthermore, assume $f\in C(\bar\Omega)$ and on $\bar\Omega,$ define $v:=u+f$ and $\tv:=\tu+f.$ Then $v-\tv=u-\tu.$

  We will prove our results in dimension $n=2$ and then extend them to dimension $n=3.$ Let $v\in C^1(\bar\Omega)$ with $|\nabla v|>0$ almost everywhere in $\Omega.$ By the regularity result of De Giorgi, (Theorem 4.11 in \cite{EG}) it follows that almost all level sets of $v$ are $C^1$ hypersurfaces. For $n=2,$ we will furthermore assume that the length of level sets of $v$ in $\Omega$ is uniformly bounded, i.e.
  \begin{equation}\label{crvbnd}
      \sup\limits_{t\inR}\int_{\{v=t\}\cap\Omega}\,dl=K<\infty.
  \end{equation}

The following result follows from an argument along the same lines as that of Theorem 3.3 in \cite{MO}.

  \begin{theorem}\label{thm a2}
      Let $n=2,$ and suppose $u$ and $\tu$ are admissible with $u|_{\partial\Omega}=0=\tu|_{\partial\Omega}.$ If $v$ satisfies \eqref{crvbnd} then
    $$
        \|u-\tu\|_{L^1(\Omega)}\leq C\|a-\ta\|_{L^\infty(\Omega)}^{\frac{1}{2}},
    $$
    for some constant $C(\Omega, m,M,k_1,K,\sigma_0,\sigma_1)$ independent of $\tu$ and $\tsigma.$
  \end{theorem}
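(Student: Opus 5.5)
The strategy is to transfer the flux estimate of Theorem \ref{thm a1} to the minimizers through the geometry of the level sets of $v=u+f$, following the scheme of Theorem 3.3 in \cite{MO}. Since $v-\tv=u-\tu$, it suffices to bound $\|v-\tv\|_{L^1(\Omega)}$.

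\textbf{Step 1: controlling the unit normals.} Admissibility gives $J=\sigma\nabla v$ and $\tJ=\tsigma\nabla\tv$ with $\sigma,\tsigma\geq\sigma_0>0$. Hence $|\nabla v|\geq m/\sigma_1>0$, and the same lower bound holds for $\nabla\tv$. The unit normals therefore satisfy
\[
\frac{\nabla v}{|\nabla v|}=\frac{J}{|J|},\qquad \frac{\nabla\tv}{|\nabla\tv|}=\frac{\tJ}{|\tJ|}.
\]
Using $|J|,|\tJ|\geq m$, we obtain the pointwise bound
\[
\left|\frac{J}{|J|}-\frac{\tJ}{|\tJ|}\right|\leq \frac{2}{m}|J-\tJ|.
\]
Combined with Theorem \ref{thm a1}, this gives
\[
\left\|\frac{\nabla v}{|\nabla v|}-\frac{\nabla\tv}{|\nabla\tv|}\right\|_{L^1(\Omega)}\leq C\|a-\ta\|_{L^\infty(\Omega)}^{1/2}.
\]

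\textbf{Step 2: a pointwise bound on $|v-\tv|$.} Fix $x\in\Omega$ with $w:=v-\tv\neq0$ at $x$; by symmetry assume $w(x)>0$. Follow the integral curve of $\nabla w$ (or a suitably chosen path) from $x$ toward $\partial\Omega$, where $w=0$, since $u=\tu=0$ there. Along the way, compare the increments of $v$ and $\tv$. Because both gradients have magnitudes comparable to their fluxes ($|\nabla v|=|J|/\sigma\in[m/\sigma_1,M/\sigma_0]$), the change in $w$ is controlled by $|J-\tJ|$ up to the factors $\sigma,\tsigma$. More precisely, one can write
\[
\nabla w=\frac{J}{\sigma}-\frac{\tJ}{\tsigma}=|\nabla v|\frac{J}{|J|}-|\nabla\tv|\frac{\tJ}{|\tJ|},
\]
and use the direction estimate from Step 1.

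\textbf{Step 3: integrating via the coarea formula.} Rather than working pointwise, integrate over level sets. Apply the coarea formula to $v$:
\[
\int_\Omega |w|\,dx=\int_\R\int_{\{v=t\}}\frac{|w|}{|\nabla v|}\,dl\,dt.
\]
On each level curve $\{v=t\}$, the function $\tv$ deviates from the constant $t$ only through the tangential derivative of $\tv$ along the curve. That tangential derivative equals $|\nabla\tv|$ times the sine of the angle between the normals, so it is bounded by $\frac{M}{\sigma_0}\cdot\frac{2}{m}|J-\tJ|$.

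This is where the curve-length bound \eqref{crvbnd} enters. Each level curve of $v$ that meets $\Omega$ has endpoints on $\partial\Omega$, where $w=0$. Integrating the tangential derivative from such an endpoint therefore gives
\[
\sup_{\{v=t\}}|w|\leq C\int_{\{v=t\}}|J-\tJ|\,dl.
\]
Multiplying by the length $K$ and integrating in $t$ via the coarea formula again (using $|\nabla v|\leq M/\sigma_0$) yields
\[
\|w\|_{L^1(\Omega)}\leq C K\|J-\tJ\|_{L^1(\Omega)}\leq C\|a-\ta\|_{L^\infty(\Omega)}^{1/2}.
\]

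\textbf{Main obstacle.} The hard step is the claim that $w$ vanishes at an endpoint of each level curve. This requires every component of $\{v=t\}$ to reach $\partial\Omega$, which I expect to follow from $|\nabla v|>0$: a closed level curve would enclose an interior extremum of $v$, and the nonvanishing gradient rules this out. I would also need to check that the set of values $t$ for which the curves are not $C^1$ has measure zero. This follows from De Giorgi's theorem and Sard-type reasoning, and it is needed to apply the fundamental theorem of calculus along each curve.
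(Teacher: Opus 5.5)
Your Step 3 is essentially the paper's argument. The paper follows the scheme of Theorem 3.3 in \cite{MO}, which it writes out here for Theorem \ref{thm F2}: apply the coarea formula for $v$, integrate $u-\tu$ along each component of $\{v=t\}$ from a boundary point where it vanishes, use the length bound $K$, apply the coarea formula again with $|\nabla v|\le M/\sigma_0$, and finish with Theorem \ref{thm a1}.

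The only difference is cosmetic. The paper uses $\nabla v\cdot\gamma'=0$ to insert the factor $\sigma/\tsigma$ and obtains the exact identity $\frac{d}{ds}(u-\tu)(\gamma(s))=\tsigma^{-1}(J-\tJ)\cdot\gamma'$. This gives the bound $|J-\tJ|/\sigma_0$ directly, whereas your angle-between-normals estimate costs an extra factor $2M/m$.

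You should drop Step 2. Along a general path, the normal part of $\nabla w$ is essentially $|\nabla v|-|\nabla\tv|=a/\sigma-\ta/\tsigma$, and this is not controlled by $|J-\tJ|$. Only the tangential directions of the level curves of $v$ are controlled.

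Your ``main obstacle'' is a genuine issue, and the paper does not address it; it simply takes $\gamma(0)\in\partial\Omega$. Your extremum argument works only if $\Omega$ is simply connected, so that the region enclosed by a closed level curve lies in $\Omega$. Without such a hypothesis the statement is false. Here is a counterexample on an annulus:
\begin{itemize}
\item Take $\Omega=\{1<|x|<2\}$, $a=\ta\equiv 1$, $f=|x|$, $F=x/|x|$, $H=1/|x|$. The hypotheses of Lemma \ref{lem:a priori bound} hold with $C_\Omega=3/4$.
\item Both $u\equiv 0$ and $\tu=\epsilon\sin(\pi(|x|-1))$ with $\epsilon\pi\le 1/2$ are minimizers, calibrated by $J=\tJ=x/|x|$.
\item Both are admissible with $\sigma_0=1/2$, $\sigma_1=2$, $m=M=1$.
\item The level sets of $v=|x|$ are circles of length at most $4\pi$, so \eqref{crvbnd} holds.
\item Yet $\|u-\tu\|_{L^1(\Omega)}>0=\|a-\ta\|_{L^\infty(\Omega)}$.
\end{itemize}
So you must assume explicitly that $\Omega$ is simply connected, or that every component of $\{v=t\}\cap\Omega$ reaches $\partial\Omega$. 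Under that assumption, \eqref{crvbnd} gives each component finite length. The arc-length parametrization then extends to endpoints on $\partial\Omega$, where $u-\tu=0$ by continuity.

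Finally, you do not need De Giorgi or a Sard-type argument. Since $v\in C^1$ and $|\nabla v|\ge m/\sigma_1>0$ everywhere in $\Omega$, every level set is a $C^1$ curve by the implicit function theorem.
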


The following result follows from an argument similar to that used in the proof of Theorem 3.5 in \cite{MO}.

\begin{theorem}\label{thm a3}
    Let $n=3$ and suppose that $u$ and $\tu$ are admissible with $u|_{\partial\Omega}=\tu|_{\partial\Omega}=0.$ Suppose that the level sets of $v$ can be foliated to one-dimensional curves as in Definition 3.4 in \cite{MO}. Then
$$
    \|u-\tu\|_{L^1(\Omega)}\leq C\|a-\ta\|_{L^\infty(\Omega)}^{1/2}
$$
for some constant $C(\Omega, m,M,k_1,K,\sigma_0,\sigma_1,c_g,C_g)$ independent of $\tu$ and $\tsigma.$
\end{theorem}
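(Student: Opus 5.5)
Our plan is to transfer the flux estimate of Theorem \ref{thm a1} to the minimizers by integrating along the one-dimensional curves of the foliation, exactly as in the three-dimensional argument of \cite{MO} (Theorem 3.5 there). We set $v=u+f$ and $\tv=\tu+f$, so that $v-\tv=u-\tu$ and $\nabla v=\nabla u+F$, $\nabla\tv=\nabla\tu+F$. Admissibility gives $J=\sigma\nabla v$ and $\tJ=\tsigma\nabla\tv$ with $\sigma_0\le\sigma,\tsigma\le\sigma_1$ and $m\le|J|,|\tJ|\le M$. Hence $|\nabla v|\ge m/\sigma_1>0$ and $|\nabla v|\le M/\sigma_0$, and the same bounds hold for $\tv$.

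\textbf{Pointwise comparison of directions.} Write $\nu=\nabla v/|\nabla v|=J/|J|$ and $\tilde\nu=\tJ/|\tJ|$. Then
\[
|\nu-\tilde\nu|\le \frac{2|J-\tJ|}{\max(|J|,|\tJ|)}\le \frac{2}{m}|J-\tJ|,
\]
so Theorem \ref{thm a1} yields $\|\nu-\tilde\nu\|_{L^1(\Omega)}\le C\|a-\ta\|_{L^\infty(\Omega)}^{1/2}$. Next, for $w=v-\tv=u-\tu$ we have $\nabla w\cdot \tau=\nabla\tv\cdot\tau$ for every unit vector $\tau$ tangent to a level set $\{v=t\}$. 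Moreover,
\[
|\nabla\tv\cdot\tau|=|\nabla\tv|\,|\tilde\nu\cdot\tau|=|\nabla\tv|\,|(\tilde\nu-\nu)\cdot\tau|\le \frac{M}{\sigma_0}|\nu-\tilde\nu|.
\]
Thus the tangential derivative of $w$ along level sets of $v$ is controlled pointwise by $|\nu-\tilde\nu|$.

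\textbf{Integration along the foliation.} By Definition 3.4 of \cite{MO}, each level set $\{v=t\}$ is foliated by curves $\gamma$. Each curve starts on $\partial\Omega$, where $w=0$, and has length bounded in terms of the geometric constants. The foliation also carries Jacobian bounds $c_g\le\cdot\le C_g$ relating surface measure on $\{v=t\}$ to the product of arclength along $\gamma$ and a transversal parameter. For $x\in\gamma$ we integrate from the boundary endpoint to get
\[
|w(x)|\le \int_\gamma |\nabla w\cdot\gamma'|\,dl\le C\int_\gamma|\nu-\tilde\nu|\,dl.
\]
Integrating over $\gamma$ (length bounded by $K$), then over the transversal parameter via the Jacobian bounds, gives
\[
\int_{\{v=t\}}|w|\,d\mathcal H^{2}\le C\int_{\{v=t\}}|\nu-\tilde\nu|\,d\mathcal H^2.
\]
Finally we apply the coarea formula in the form $\int_\Omega g\,dx=\int_\R\int_{\{v=t\}} g/|\nabla v|\,d\mathcal H^2\,dt$, using $m/\sigma_1\le|\nabla v|\le M/\sigma_0$ on both sides. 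This gives
\[
\|w\|_{L^1(\Omega)}\le C\|\nu-\tilde\nu\|_{L^1(\Omega)}\le C\|a-\ta\|_{L^\infty(\Omega)}^{1/2},
\]
with $C$ depending only on $\Omega,m,M,k_1,K,\sigma_0,\sigma_1,c_g,C_g$.

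\textbf{Main obstacle.} The delicate step is the foliation step. We must ensure that every point of $\{v=t\}$ is joined to $\partial\Omega$ by a leaf of controlled length. We must also ensure that the change of variables between surface measure and (arclength)$\times$(transversal parameter) has Jacobian bounded above and below uniformly in $t$, so that the Fubini-type exchange is legitimate. This is precisely what the hypotheses of Definition 3.4 in \cite{MO} supply, so we will verify that the argument there uses only the structure of $v$ and the pointwise bound on the tangential derivative. It must not use the specific perturbation (of $H$) treated in \cite{MO}. Once that is checked, the only input specific to the present setting is Theorem \ref{thm a1}, and the rate $1/2$ is inherited unchanged.
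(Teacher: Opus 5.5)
Your proposal is correct and follows the same route as the paper. The paper defers to Theorem 3.5 of \cite{MO}, and its structure is spelled out in the proof of Theorem \ref{thm F3}: coarea in $v$, a second coarea in $g_t$ with $c_g\le|\nabla g_t|\le C_g$, integration from the boundary along each leaf of length at most $K$, a return to $\Omega$ by coarea, and finally Theorem \ref{thm a1}. The only difference is cosmetic: the paper bounds the tangential derivative directly by $|\nabla\tv\cdot\gamma'|=|(\tJ-J)\cdot\gamma'|/\tsigma\le|J-\tJ|/\sigma_0$ using $J\cdot\gamma'=0$, whereas you detour through the unit normals $\nu,\tilde\nu$, which only changes the constant (also note $\nabla w\cdot\tau=-\nabla\tv\cdot\tau$, a harmless sign slip).
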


\subsection{$W^{1,1}$ stability of the minimizers}

In this section, we prove the stability of the minimizers of \eqref{mainmin0} in $W^{1,1}.$ As mentioned in Section 2.2, in general, \eqref{mainmin0} does not have unique minimizers, so to prove stability results, it is natural to expect stronger assumptions on the minimizers.

The following two stability result can be established by an argument similar to that used in the proof of Theorem 4.3 and 4.4 in \cite{MO}, and we omit the details for brevity.

\begin{theorem}\label{thm a4}
    Let $n=2$ and suppose that $u$ and $\tu$ are admissible with $u|_{\partial\Omega}=\tu|_{\partial\Omega}=0.$ Suppose $F\in C^2(\bar\Omega),$ $H\in C^1(\bar\Omega),$ $\sigma,\tsigma\in C^2(\bar\Omega)$ and
    \begin{equation}\label{FHineq}
    \|F\|_{C^2(\Omega)},\|H\|_{C^1(\Omega)}\leq k_2
\end{equation}
and
\begin{equation}\label{sigmaineq}
    \|\sigma\|_{C^2(\Omega)},\|\tsigma\|_{C^2(\Omega)}\leq\sigma_2
\end{equation}
    for some $k_2,\sigma_2>0.$ If $v$ satisfies \eqref{crvbnd} and the level sets of $v$ are well-structured in the sense of Definition 4.2 in \cite{MO}, then
\begin{equation}
    \|\nabla u-\nabla\tu\|_{L^1(\Omega)}\leq C\|\,a-\ta\|_{L^\infty(\Omega)}^{1/4},
\end{equation}
for some constant $C=C(\Omega,m,M,k_1,k_2,K,\tK,\sigma_0,\sigma_1,\sigma_2)$ independent of $\tu$ and $\tsigma.$
\end{theorem}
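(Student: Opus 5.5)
The plan is to write $\nabla u-\nabla\tu=\nabla v-\nabla\tv$ and use the flux representation $\nabla v=J/\sigma$, $\nabla\tv=\tJ/\tsigma$, which holds pointwise for admissible minimizers. This gives
\[
\nabla u-\nabla\tu=\frac{J-\tJ}{\sigma}+\tJ\Big(\frac{1}{\sigma}-\frac{1}{\tsigma}\Big),
\]
so that
\[
\|\nabla u-\nabla\tu\|_{L^1(\Omega)}\le \frac{1}{\sigma_0}\|J-\tJ\|_{L^1(\Omega)}+\frac{M}{\sigma_0^2}\|\sigma-\tsigma\|_{L^1(\Omega)}.
\]
By Theorem \ref{thm a1}, the first term is already $O(\|a-\ta\|_{L^\infty}^{1/2})$. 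The whole task therefore reduces to a stability estimate for the coefficient, of the form $\|\sigma-\tsigma\|_{L^1}\le C\|a-\ta\|_{L^\infty}^{1/4}$. This is the analogue of the $\sigma$-stability step behind Theorem 4.3 in \cite{MO}.

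To estimate $\sigma-\tsigma$, I would use the divergence equations. Since $\nabla\cdot J=\nabla\cdot\tJ=H$, we have $\nabla\cdot(\sigma\nabla v-\tsigma\nabla\tv)=0$. Expanding,
\[
\nabla\sigma\cdot\nabla v+\sigma\Delta v=\nabla\tsigma\cdot\nabla\tv+\tsigma\Delta\tv .
\]
Along each level curve of $v$ (a $C^1$ curve in $\Omega$ of length at most $K$ by \eqref{crvbnd}), the well-structured hypothesis of Definition 4.2 in \cite{MO} should let me reach every point of $\Omega$ by an integral curve of $\nabla v$ (the flow transversal to level sets) emanating from $\partial\Omega$, with controlled length. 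Along such a curve, $\sigma|\nabla v|$ satisfies a transport-type ODE,
\[
\frac{d}{ds}\sigma=\frac{1}{|\nabla v|}\nabla\sigma\cdot\nabla v,
\]
whose right-hand side is $H-\sigma\Delta v$ up to normalization. The same holds for $\tsigma$ along $\tv$. Subtracting and applying Gronwall, the difference $\sigma-\tsigma$ is bounded by its boundary values plus integrals of $|J-\tJ|$ and of $|\nabla(\nabla v-\nabla\tv)|$. The $C^2$ bounds \eqref{FHineq}, \eqref{sigmaineq} control the coefficients uniformly. Alternatively, one can use $|J|=\sigma|\nabla v|$ together with $\bigl||J|-|\tJ|\bigr|\le|J-\tJ|$ and an $L^1$ interpolation inequality. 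Either route converts the second-derivative term into something controlled by $\|v-\tv\|_{L^1}$ and $\|J-\tJ\|_{L^1}$ times $C^2$ norms.

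The intended source of the rate $1/4$ is an interpolation of Gagliardo–Nirenberg type in $L^1$:
\[
\|\nabla w\|_{L^1}\le C\|w\|_{L^1}^{1/2}\|w\|_{W^{2,1}}^{1/2},
\]
applied to $w=u-\tu$ or to $\sigma-\tsigma$. Here $\|w\|_{W^{2,1}}$ is bounded by $k_2,\sigma_2$ and the admissibility constants, since $\nabla v=J/\sigma$ has second derivatives bounded by these data. Combined with Theorem \ref{thm a2}, which gives $\|u-\tu\|_{L^1}\le C\|a-\ta\|_{L^\infty}^{1/2}$, this yields $\|\nabla u-\nabla\tu\|_{L^1}\le C\|a-\ta\|^{1/4}$. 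This interpolation might even bypass the $\sigma$ step if $u,\tu\in W^{2,1}$ with uniform bounds. The hard point is exactly that uniform $W^{2,1}$ (or $C^2$) bound on $u$ and $\tu$, which is where the well-structured hypothesis and the $C^2$ bounds on $\sigma,\tsigma,F$ are needed.

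I expect the main obstacle to be justifying the uniform second-order control near critical points and near $\partial\Omega$, where $|\nabla v|$ may degenerate. I would handle this as in \cite{MO}, using the lower bound $|\nabla v|=|J|/\sigma\ge m/\sigma_1$, which admissibility provides everywhere and which makes the level-set foliation uniformly nondegenerate.
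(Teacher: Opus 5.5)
Your opening reduction $\|\nabla u-\nabla\tu\|_{L^1}\le\sigma_0^{-1}\|J-\tJ\|_{L^1}+M\sigma_0^{-2}\|\sigma-\tsigma\|_{L^1}$ is correct. However, it shifts the whole difficulty onto $\sigma-\tsigma$, and neither of your ways of estimating that quantity works. In the paper the implication runs the other way: Theorem \ref{thm a6} is deduced from Theorem \ref{thm a4}. Under admissibility $|J|=a$ and $|\tJ|=\ta$, so $\sigma-\tsigma=a/|\nabla v|-\ta/|\nabla\tv|$ is, up to $O(|a-\ta|)$, the quantity $|\nabla v|-|\nabla\tv|$ you are trying to bound. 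For the same reason your ``alternative'' via $\bigl||J|-|\tJ|\bigr|\le|J-\tJ|$ is circular.

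The transport/Gronwall route does not close either, for three reasons.
First, Definition 4.2 of \cite{MO} controls level curves of $v$ and their variations $\gamma_t$ (through $\tK$ and $B_{x,h}$), not integral curves of $\nabla v$.
Second, the identity $\nabla\tsigma\cdot\nabla\tv=H-\tsigma\Delta\tv$ holds along flow lines of $\tv$. Subtracting along flow lines of $v$ therefore produces $\tsigma\,\Delta(v-\tv)$ and $\nabla\tsigma\cdot\nabla(v-\tv)$, i.e.\ the unknown and its second derivatives.
Third, the initial values of $\sigma-\tsigma$ on $\partial\Omega$ are uncontrolled, since $u=\tu=0$ there fixes only tangential derivatives.

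The interpolation on $w=u-\tu$ is the viable part of your plan, but its key input is not justified as written. The hypotheses bound $\sigma$, not derivatives of $J$, so the claim that $\nabla v=J/\sigma$ has second derivatives bounded by the data is unavailable. Neither the well-structured hypothesis nor $|\nabla v|\ge m/\sigma_1$ yields second-order control.

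What does give it is the Euler--Lagrange equation, read as a uniformly elliptic equation with $\sigma$ frozen as a coefficient. From $\nabla\cdot(\sigma\nabla u)=H-\nabla\cdot(\sigma F)$ you get $\Delta u=\bigl(H-\nabla\cdot(\sigma F)-\nabla\sigma\cdot\nabla u\bigr)/\sigma$. Combined with $|\nabla u|\le M/\sigma_0+k_2$, this bounds $\|\Delta u\|_{L^\infty}$, and likewise $\|\Delta\tu\|_{L^\infty}$, by a constant depending on $k_2,\sigma_0,\sigma_2,M$. Since $w\in H^1_0(\Omega)$,
\[
\|\nabla w\|_{L^2(\Omega)}^2=-\int_\Omega w\,\Delta w\,dx\le\|\Delta w\|_{L^\infty(\Omega)}\|w\|_{L^1(\Omega)}.
\]
Cauchy--Schwarz plus Theorem \ref{thm a2} then give $\|\nabla u-\nabla\tu\|_{L^1}\le C\|a-\ta\|_{L^\infty}^{1/4}$. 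This avoids the global $W^{2,1}$ bound, and hence any boundary regularity, that your Gagliardo--Nirenberg version would need.

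Once repaired, your argument differs genuinely from the paper's. The paper follows Theorem 4.3 of \cite{MO} (compare the proof of Theorem \ref{thm F4}). It writes difference quotients of $\tu-u$ as line integrals of $G=(\tJ-J)/\tsigma$ along the well-structured level curves of $v$. Integrating with \eqref{crvbnd} and the coarea formula gives $\|\nabla u-\nabla\tu\|_{L^1}\le C(\|J-\tJ\|_{L^1}+\|\nabla G\|_{L^1})$. It then halves the rate of Theorem \ref{thm a1} via $\|\nabla G_i\|_{L^1}\le C(\|G_i\|_{L^1}^{1/2}+\|G_i\|_{L^1})$, which requires $C^2$-type control of $J,\tJ$. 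Your route instead halves the rate of Theorem \ref{thm a2}, needs only $\Delta u,\Delta\tu\in L^\infty$, and does not use the well-structured hypothesis at all.
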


\begin{theorem}\label{thm a5}
    Let $n=3$ and suppose that $u$ and $\tu$ are admissible with $u|_{\partial\Omega}=\tu|_{\partial\Omega}=0.$ Suppose $F\in C^2(\bar\Omega),$ $H\in C^1(\bar\Omega),$ $\sigma,\tsigma\in C^2(\bar\Omega)$ and satisfy \eqref{FHineq} and \eqref{sigmaineq}. If $v$ satisfies \eqref{crvbnd}, the level sets of $v$ can be foliated to one-dimensional curves in the sense of Definition 3.4 in \cite{MO}, and the level sets of $v$ are well-structured in the sense of Definition 4.2 in \cite{MO}, then
\begin{equation}
    \|\nabla u-\nabla\tu\|_{L^1(\Omega)}\leq C\|\,a-\ta\|_{L^\infty(\Omega)}^{1/4},
\end{equation}
for some constant $C=C(\Omega,m,M,k_1,k_2,K,\tK,\sigma_0,\sigma_1,\sigma_2)$ independent of $\tu$ and $\tsigma.$
\end{theorem}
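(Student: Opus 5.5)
We reduce Theorem \ref{thm a5} to the two-dimensional statement, Theorem \ref{thm a4}, by using the foliation of the level sets of $v$. This is how Theorem 4.4 of \cite{MO} is obtained from Theorem 4.3 there. The chain of estimates is the same as in the two-dimensional case:
\[
\|\nabla u-\nabla\tu\|_{L^1(\Omega)}\;\lesssim\;\|\sigma-\tsigma\|_{L^1(\Omega)}+\|J-\tJ\|_{L^1(\Omega)}\;\lesssim\;\|a-\ta\|^{1/4}_{L^\infty(\Omega)}.
\]

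\emph{Step 1: split the gradient difference.} Since $J=\sigma(\nabla u+F)$ and $\tJ=\tsigma(\nabla\tu+F)$, with $\sigma,\tsigma\ge\sigma_0$, we have pointwise
\[
\nabla u-\nabla\tu=\frac{J}{\sigma}-\frac{\tJ}{\tsigma}
=\frac{J-\tJ}{\sigma}+\tJ\,\frac{\tsigma-\sigma}{\sigma\tsigma}.
\]
Using $|\tJ|\le M$, this gives
\[
\|\nabla u-\nabla\tu\|_{L^1}\le \sigma_0^{-1}\|J-\tJ\|_{L^1}+M\sigma_0^{-2}\|\sigma-\tsigma\|_{L^1}.
\]
The first term is bounded by $C\|a-\ta\|^{1/2}_{L^\infty}$ by Theorem \ref{thm a1}. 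This is at most $C'\|a-\ta\|^{1/4}_{L^\infty}$ once we note that $\|a-\ta\|_{L^\infty}\le 2M$. It therefore remains to show
\[
\|\sigma-\tsigma\|_{L^1(\Omega)}\le C\|a-\ta\|^{1/4}_{L^\infty(\Omega)}.
\]

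\emph{Step 2: transport along the foliating curves.} Taking the divergence of $J=\sigma\nabla v$ and using $\nabla\cdot J=H$ gives
\[
\nabla\sigma\cdot\nabla v+\sigma\Delta v=H,
\]
and similarly for $\tsigma$ and $\tv$. On the other hand, $\sigma|\nabla v|=|J|$ and $\tsigma|\nabla\tv|=|\tJ|$, so $\sigma-\tsigma$ can be recovered from $|J|-|\tJ|$ together with $|\nabla v|-|\nabla\tv|$. Following the well-structured assumption (Definition 4.2 of \cite{MO}), we integrate the difference of the transport equations along the one-dimensional curves that foliate the level sets of $v$ (Definition 3.4 of \cite{MO}). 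Each curve starts from $\partial\Omega$, where the data are controlled. The $C^2$ bounds \eqref{FHineq}--\eqref{sigmaineq} make the coefficients Lipschitz, so a Gr\"onwall-type estimate along each curve bounds $|\sigma-\tsigma|$ there by the integral of $|J-\tJ|$ plus derivative terms. The derivative terms are interpolated: the $L^1$ smallness of $u-\tu$ (Theorem \ref{thm a3}, rate $1/2$) is combined with the uniform $C^2$ bounds, which costs a square root. This is the source of the exponent $1/4$. Integrating over the curves via the coarea formula, with the Jacobian of the foliation bounded between $c_g$ and $C_g$ and the length bound \eqref{crvbnd}, gives the $L^1$ estimate.

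\emph{Main obstacle.} The hardest step is checking that the three-dimensional foliation keeps the Jacobians in the coarea/Fubini change of variables uniformly bounded, so that the per-curve Gr\"onwall bounds integrate to an $L^1$ bound with constants depending only on the listed data. This is exactly where the argument departs from the planar one. We would handle it by parametrizing $\Omega$ by (level value, leaf parameter, arclength) and bounding the determinant above and below by $c_g$ and $C_g$. Once that is done, every remaining step follows the proof of Theorem 4.4 in \cite{MO}, with Lemma \ref{mainlemma} and Theorem \ref{thm a1} used in place of their $H$-perturbation counterparts.
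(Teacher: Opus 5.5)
Your Step 1 identity is correct, but it reduces the theorem to $\|\sigma-\tsigma\|_{L^1(\Omega)}\le C\|a-\ta\|_{L^\infty(\Omega)}^{1/4}$. In the paper that estimate is Theorem \ref{thm a7}, and it is \emph{deduced from} the present theorem. So everything rests on Step 2, and Step 2 does not work. The equation $\nabla\sigma\cdot\nabla v+\sigma\Delta v=H$ transports $\sigma$ along $\nabla v$, i.e.\ \emph{across} the level sets. The curves of Definitions 3.4 and 4.2 in \cite{MO} lie \emph{inside} the level sets, where $\gamma'\cdot\nabla v=0$. Along them the equation gives no ODE for $\sigma\circ\gamma$, so there is nothing to apply Gr\"onwall to. Switching to the characteristics of $\nabla v$ does not help either. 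First, $\tsigma$ is transported along $\nabla\tv$, so the difference of the two equations contains $\nabla\tsigma\cdot\nabla(v-\tv)$ and $\tsigma\,\Delta(v-\tv)$: the very gradient difference you want to bound, plus its second derivatives. Second, $\sigma-\tsigma$ is not controlled on $\partial\Omega$. There $u=\tu=0$ fixes only tangential derivatives, while $\sigma=a/|\nabla v|$ depends on the normal one. The interpolation remark does not rescue this, for three reasons. The bounds \eqref{FHineq}--\eqref{sigmaineq} concern $F,\sigma,\tsigma$, not $u,\tu$. A difference of second derivatives cannot be made small from $C^2$ bounds. And if you did have uniform $W^{2,1}$ bounds on $u-\tu$, interpolating them against Theorem \ref{thm a3} would give the claim at once, making Steps 1--2 superfluous.

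The missing idea is to use the leaves to represent $\tu-u$ itself rather than $\sigma$. Let $x=\gamma(s_0)$ lie on a leaf with $\gamma(0)=y\in\partial\Omega$. Then $u(y)=\tu(y)=0$, $v$ is constant on $\gamma$, and $J\cdot\gamma'=\sigma\nabla v\cdot\gamma'=0$, so
\[
\tu(x)-u(x)=\tv(x)-\tv(y)=\int_0^{s_0}G(\gamma(s))\cdot\gamma'(s)\,ds,\qquad G:=\frac{\tJ-J}{\tsigma}.
\]
Differentiate in a direction $h$ through the neighbouring leaves $\gamma_t$ of Definition 4.2 in \cite{MO}, using $|\gamma_t'-\gamma'|\le\tK t$ and $(\gamma_t-\gamma)/t\to B_{x,h}$. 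This bounds $|\nabla\tu-\nabla u|(x)$ by line integrals over $\gamma$ of $|J-\tJ|$ and $|\nabla G_i|$. The length bound $K$, the two-step coarea formula over $v$ and $g_\tau$ with $c_g\le|\nabla g_\tau|\le C_g$, and $m/\sigma_1\le|\nabla v|\le M/\sigma_0$ then give
\[
\|\nabla u-\nabla\tu\|_{L^1(\Omega)}\le C\Big(\|J-\tJ\|_{L^1(\Omega)}+\sum_{i=1}^3\|\nabla G_i\|_{L^1(\Omega)}\Big).
\]
The exponent $1/4$ comes from the interpolation $\|\nabla G_i\|_{L^1}\le C(\|G_i\|_{L^1}^{1/2}+\|G_i\|_{L^1})$ of Lemma 2.14 in \cite{MO} (cf.\ Lemma \ref{lemma F3}). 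It is applied to $G$ rather than to $u-\tu$, together with $\|G\|_{L^1}\le\sigma_0^{-1}\|J-\tJ\|_{L^1}\le C\|a-\ta\|_{L^\infty}^{1/2}$ from Theorem \ref{thm a1}. This is the proof of Theorem \ref{thm F5} with $\tF=F$. The $\sigma$-estimate then follows as a corollary, and the Jacobian bookkeeping you flag as the main obstacle is routine.
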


\begin{theorem}\label{thm a6}
Let $n=2$ and suppose that $u$ and $\tu$ are admissible with $u|_{\partial\Omega}=\tu|_{\partial\Omega}=0.$ Suppose $F\in C^2(\bar\Omega),$ $H\in C^1(\bar\Omega),$ $\sigma,\tsigma\in C^2(\bar\Omega)$ and satisfy \eqref{FHineq} and \eqref{sigmaineq}. If $v$ satisfies \eqref{crvbnd} and the level sets of $v$ are well-structured in the sense of Definition 4.2 in \cite{MO}, then
\begin{equation}
    \|\sigma-\tsigma\|_{L^1(\Omega)}\leq C\|\,a-\ta\|_{L^\infty(\Omega)}^{1/4},
\end{equation}
for some constant $C=C(\Omega,m,M,k_1,k_2,K,\tK,\sigma_0,\sigma_1,\sigma_2)$ independent of $\tu$ and $\tsigma.$
\end{theorem}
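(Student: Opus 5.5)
The plan is to recover $\sigma$ algebraically from quantities whose stability is already known. By admissibility, $J=\sigma(\nabla u+F)$ and $\tJ=\tsigma(\nabla\tu+F)$ hold pointwise in $\Omega$, with $\sigma,\tsigma\in[\sigma_0,\sigma_1]$ and $m\le|J|,|\tJ|\le M$. Taking norms gives
\[
\sigma=\frac{|J|}{|\nabla u+F|},\qquad \tsigma=\frac{|\tJ|}{|\nabla\tu+F|}.
\]
The denominators are bounded below: $|\nabla u+F|=|J|/\sigma\ge m/\sigma_1>0$, and likewise for $\tu$. So the map $(j,p)\mapsto |j|/|p|$ is Lipschitz on the relevant region $\{m\le |j|\le M,\ |p|\ge m/\sigma_1\}$.

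Writing
\[
\sigma-\tsigma=\frac{|J|-|\tJ|}{|\nabla u+F|}+|\tJ|\,\frac{|\nabla\tu+F|-|\nabla u+F|}{|\nabla u+F|\,|\nabla\tu+F|},
\]
we obtain pointwise
\[
|\sigma-\tsigma|\le \frac{\sigma_1}{m}\,|J-\tJ|+\frac{M\sigma_1^2}{m^2}\,|\nabla u-\nabla\tu|,
\]
where the second term uses $\bigl||\nabla\tu+F|-|\nabla u+F|\bigr|\le|\nabla u-\nabla\tu|$, since $F$ is the same for both problems.

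Integrating over $\Omega$, the $J$ term is controlled by Theorem \ref{thm a1} as $C\|a-\ta\|_{L^\infty(\Omega)}^{1/2}$. The gradient term is controlled by Theorem \ref{thm a4}, whose hypotheses ($n=2$, admissibility, zero boundary data, \eqref{FHineq}, \eqref{sigmaineq}, \eqref{crvbnd}, well-structured level sets) coincide with those of the present statement; it gives $C\|a-\ta\|_{L^\infty(\Omega)}^{1/4}$.

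To combine the two exponents, split into cases. If $\|a-\ta\|_{L^\infty(\Omega)}\le1$, then the square root is at most the fourth root. Otherwise $\|a-\ta\|_{L^\infty(\Omega)}>1$, and the trivial bound $\|\sigma-\tsigma\|_{L^1(\Omega)}\le 2\sigma_1|\Omega|$ is dominated by $2\sigma_1|\Omega|\,\|a-\ta\|^{1/4}$. The constant then depends only on the listed parameters.

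The only delicate point is the lower bound on $|\nabla u+F|$ and $|\nabla\tu+F|$, which makes the quotient Lipschitz. This follows directly from the admissibility bounds $|J|\ge m$ and $\sigma\le\sigma_1$, so I expect no real obstacle. The substance lies entirely in Theorem \ref{thm a4}, which is taken as given.
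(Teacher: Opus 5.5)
Your proposal is correct and follows essentially the same route as the paper. The paper adapts Theorem 4.6 of \cite{LM2} (compare the proof of Theorem \ref{thm F6}): it writes $\sigma=a/|\nabla v|$ and $\tsigma=\ta/|\nabla\tv|$, splits the difference algebraically using $|\nabla v|,|\nabla\tv|\ge m/\sigma_1$, and invokes Theorem \ref{thm a4} for the gradient term. The only difference is that you keep $|J|,|\tJ|$ in the numerator and appeal to Theorem \ref{thm a1}. Since $|J|=a$ and $|\tJ|=\ta$ a.e.\ for admissible minimizers, that term is in fact bounded directly by $\frac{\sigma_1}{m}|\Omega|\,\|a-\ta\|_{L^\infty(\Omega)}$, so your detour is harmless but unnecessary.
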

{\bf Proof.}
    The proof follows from Theorem \ref{thm a4} and proceeds by adapting the argument used in the proof of Theorem 4.6 in \cite{LM2}, with $u$ and $\tu$ replaced by $v$ and $\tv$, respectively.
\hfill$\Box$

Similarly, the above theorem continues to hold in dimension $n=3$, as a consequence of Theorem \ref{thm a5} combined with arguments and calculations analogous to those appearing in the proof of Theorem 4.6 in \cite{LM2}.

\begin{theorem}\label{thm a7}
Let $n=3$ and suppose that $u$ and $\tu$ are admissible with $u|_{\partial\Omega}=\tu|_{\partial\Omega}=0.$ Suppose $F\in C^2(\bar\Omega),$ $H\in C^1(\bar\Omega),$ $\sigma,\tsigma\in C^2(\bar\Omega)$ and satisfy \eqref{FHineq} and \eqref{sigmaineq}. If $v$ satisfies \eqref{crvbnd}, the level sets of $v$ can be foliated to one-dimensional curves in the sense of Definition 3.4 in \cite{MO}, and the level sets of $v$ are well-structured in the sense of Definition 4.2 in \cite{MO}, then
\begin{equation}
    \|\sigma-\tsigma\|_{L^1(\Omega)}\leq C\|\,a-\ta\|_{L^\infty(\Omega)}^{1/4},
\end{equation}
for some constant $C=C(\Omega,m,M,k_1,k_2,K,\tK,\sigma_0,\sigma_1,\sigma_2)$ independent of $\tu$ and $\tsigma.$
\end{theorem}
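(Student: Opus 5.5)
We plan to deduce Theorem \ref{thm a7} from Theorem \ref{thm a5}, using the algebraic identity linking $\sigma$, $a$ and $\nabla v$. By admissibility, $J=\sigma\nabla v$ with $v=u+f$, and $|J|=a$ wherever $\nabla v\neq 0$. Hence
\[
\sigma=\frac{a}{|\nabla v|},\qquad \tsigma=\frac{\ta}{|\nabla\tv|}
\]
on the set where the gradients are nonzero, which has full measure since $|\nabla v|>0$ a.e. Moreover, the bounds $m\le |J|\le M$ and $\sigma_0\le\sigma\le\sigma_1$ give
\[
\frac{m}{\sigma_1}\le |\nabla v|\le \frac{M}{\sigma_0},
\]
and the same bounds hold for $|\nabla\tv|$. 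In particular the gradients are bounded away from zero and infinity uniformly.

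We then split
\[
\sigma-\tsigma=\frac{a-\ta}{|\nabla v|}+\ta\,\frac{|\nabla\tv|-|\nabla v|}{|\nabla v||\nabla\tv|}.
\]
Integrating over $\Omega$ and using the two-sided gradient bounds, we get
\[
\|\sigma-\tsigma\|_{L^1(\Omega)}\le \frac{\sigma_1}{m}|\Omega|\,\|a-\ta\|_{L^\infty(\Omega)}+\frac{M\sigma_1^2}{m^2}\,\|\nabla v-\nabla\tv\|_{L^1(\Omega)}.
\]
Since $\nabla v-\nabla\tv=\nabla u-\nabla\tu$, Theorem \ref{thm a5} bounds the last term by $C\|a-\ta\|_{L^\infty(\Omega)}^{1/4}$.

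For the first term, $\|a-\ta\|_{L^\infty(\Omega)}\le 2M$, so
\[
\|a-\ta\|_{L^\infty(\Omega)}\le (2M)^{3/4}\,\|a-\ta\|_{L^\infty(\Omega)}^{1/4}.
\]
Combining the two bounds gives the claimed rate, with a constant depending only on the listed parameters. This is the same mechanism as in Theorem 4.6 of \cite{LM2}, with $u,\tu$ replaced by $v,\tv$.

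The main point needing care is that $|J|=a$ only holds $|\nabla v|$-a.e., and that admissibility must yield a pointwise lower bound on $|\nabla v|$. The lower bound comes directly from $|J|\ge m$ and $\sigma\le\sigma_1$, so the main obstacle is really just verifying that the hypotheses of Theorem \ref{thm a5} are met. These are exactly the hypotheses of Theorem \ref{thm a7}: the foliation, the well-structured level sets, and \eqref{crvbnd}. All the genuinely geometric difficulty is therefore inherited from Theorem \ref{thm a5}.
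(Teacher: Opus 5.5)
Your proposal is correct and follows essentially the same route as the paper, which deduces the result from Theorem~\ref{thm a5} together with the algebraic argument of Theorem 4.6 in \cite{LM2}, with $u,\tu$ replaced by $v,\tv$. This is the same computation the paper writes out in the proof of Theorem~\ref{thm F6}; your additional term $(a-\ta)/|\nabla v|$, bounded by a constant times $\|a-\ta\|_{L^\infty(\Omega)}^{1/4}$, is exactly what is needed here because the weight itself is perturbed.
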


\section{Stability with respect to the vector field $F$}
In this section, we establish stability results for minimizers of \eqref{mainmin0} with respect to perturbations of the vector field $F$. More precisely, we study how minimizers vary under suitable perturbations of the drift term in the functional and obtain quantitative estimates describing the dependence of minimizers on the underlying vector field. Let us start by proving the following lemma.

\begin{lemma}\label{lemma F1} The following inequality holds: 
$$
    \left|
        \int_\Omega a|Du+F|+Hu\,dx
        -\int_\Omega a|D\tu+\tF|+H\tu\,dx
    \right|
    \leq M\|F-\tF\|_{L^1(\Omega)}.
$$
\end{lemma}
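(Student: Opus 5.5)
The plan is the same two-sided comparison of minimal values used in Lemma \ref{lemma a1}. Here $u$ minimizes the functional with drift $F$, and $\tu$ minimizes the functional with the same weight $a$ and forcing $H$ but with drift $\tF$. Since both problems are posed over the same admissible class ($BV_0(\Omega)$, or $H^1_0(\Omega)$) with the same boundary condition, each minimizer is a legitimate competitor for the other problem.

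For the upper bound, test the $F$-problem with $\tu$. Minimality of $u$ gives
\begin{align*}
\int_\Omega a|Du+F|+Hu\,dx
&\leq \int_\Omega a|D\tu+F|+H\tu\,dx\\
&\leq \int_\Omega a|D\tu+\tF|+H\tu\,dx+\int_\Omega a|F-\tF|\,dx,
\end{align*}
where the second line uses the pointwise triangle inequality $|D\tu+F|\leq|D\tu+\tF|+|F-\tF|$. Since $a\leq M$ by \eqref{mbounds}, the last integral is at most $M\|F-\tF\|_{L^1(\Omega)}$.

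For the lower bound, interchange the roles. Test the $\tF$-problem with $u$ and use $|Du+\tF|\leq|Du+F|+|F-\tF|$. This gives the reverse inequality with the same error term $M\|F-\tF\|_{L^1(\Omega)}$. Combining the two one-sided bounds yields the absolute-value estimate in Lemma \ref{lemma F1}.

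The only point needing care is that the triangle inequality is applied to measures when $Du$ is merely $BV$. The identity $|D\tu+F| \leq |D\tu+\tF|+|F-\tF|\,dx$ holds as an inequality of Radon measures, because $F-\tF\in L^1$ is absolutely continuous. Integrating against the bounded weight $a$ is therefore legitimate. I also need both minimal values to be finite, so that the subtractions make sense. This follows as in Lemma \ref{lem:a priori bound}: the competitor $w\equiv0$ bounds each value above by $M\|F\|_{L^1}$ or $M\|\tF\|_{L^1}$, and the coercivity argument there bounds it below. With these justifications in place the statement follows, and unlike Lemma \ref{lemma a1} the constant does not even depend on the a priori bound \eqref{uppbound}.
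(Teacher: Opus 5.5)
Your proposal is correct and follows the paper's proof. The paper chains the same two competitor substitutions (testing the $\tF$-problem with $u$ and the $F$-problem with $\tu$) with the pointwise bound $\bigl||Dw+F|-|Dw+\tF|\bigr|\le|F-\tF|$ and $a\le M$ into one string of inequalities, and your remarks on the measure-valued triangle inequality and finiteness of the minimal values spell out points the paper leaves implicit.
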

{\bf Proof.}
\begin{align*}
    -M\|F-\tF\|_{L^1(\Omega)}
    &\leq\Int_\Omega -a|F-\tF|\,dx\nl
    &\leq\Int_\Omega a\big(|Du+F|-|Du+\tF|\big)\,dx\nl
    &=\Int_\Omega a|Du+F|+Hu\,dx-\Int_\Omega a|Du+\tF|+H u\,dx\nl
    &\leq\Int_\Omega a|Du+F|+Hu\,dx-\Int_\Omega a|D\tu+\tF|+H\tu\,dx\nl
    &\leq\Int_\Omega a|D\tu+F|+H\tu\,dx-\Int_\Omega a|D\tu+\tF|+H\tu\,dx\nl
    &=\Int_\Omega a\big(|D\tu+F|-|D\tu+\tF|\big)\,dx\nl
    &\leq\Int_\Omega a|F-\tF|\,dx\nl
    &\leq M\|F-\tF\|_{L^1(\Omega)}.
\end{align*}
Thus,
$$
    \left|
        \Int_\Omega a|Du+F|+Hu\,dx-\Int_\Omega a|D\tu+F|+\tH\tu\,dx
    \right|\nl
    \leq M\|F-\tF\|_{L^1(\Omega)}.
$$
\hfill $\Box$

\begin{lemma}\label{lemma F2} Suppose that $0\leq\sigma(x)\leq\sigma_1$ for some constant $\sigma_1>0.$ Then
$$
    \int_\Omega |J||\tJ|-J\cdot\tJ\,dx\leq 2M\sigma_1\|F-\tF\|_{L^1(\Omega)}.
$$
\end{lemma}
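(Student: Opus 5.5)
Here $J=\sigma(Du+F)$ with $|J|=a$ on the support of $|Du+F|$, and $\tJ$ is the dual field for the problem with drift $\tF$ (same $a$, same $H$), so $\nabla\cdot\tJ=H$ and $|\tJ|\leq a$. The plan mirrors the proof of Lemma~\ref{mainlemma}. The pointwise identity to start from is
\[
|J||\tJ|-J\cdot\tJ=\sigma\big(|Du+F||\tJ|-(Du+F)\cdot\tJ\big)\leq\sigma\big(a|Du+F|-(Du+F)\cdot\tJ\big),
\]
which uses $|\tJ|\leq a$. Where $Du+F=0$ the left side vanishes. Since $0\leq\sigma\leq\sigma_1$ and the bracket is nonnegative (because $|\tJ|\leq a$), integration gives
\[
\int_\Omega |J||\tJ|-J\cdot\tJ\,dx\leq\sigma_1\int_\Omega a|Du+F|-\tJ\cdot(Du+F)\,dx.
\]

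Next I would handle $\int_\Omega\tJ\cdot Du$ by the weak integration by parts formula \eqref{wibp}, exactly as in Lemma~\ref{mainlemma}. Since $u,\tu$ vanish on $\partial\Omega$ and $\nabla\cdot\tJ=H$,
\[
\int_\Omega \tJ\cdot Du+Hu\,dx=0=\int_\Omega \tJ\cdot D\tu+H\tu\,dx .
\]
This gives
\[
\int_\Omega a|Du+F|-\tJ\cdot(Du+F)=\int_\Omega a|Du+F|+Hu-\tJ\cdot F ,
\]
and the same identity with $\tu$ yields $\int_\Omega \tJ\cdot\tF=\int_\Omega \tJ\cdot(D\tu+\tF)+H\tu$. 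On the support of $|D\tu+\tF|$ we have $\tJ\cdot(D\tu+\tF)=a|D\tu+\tF|$, so
\[
\int_\Omega \tJ\cdot \tF=\int_\Omega a|D\tu+\tF|+H\tu
\]
(equivalently, this is the zero duality gap \eqref{dualityGap2} for the perturbed problem). Hence the right side above equals
\[
\Big(\int_\Omega a|Du+F|+Hu-\int_\Omega a|D\tu+\tF|+H\tu\Big)+\int_\Omega\tJ\cdot(\tF-F).
\]

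The first bracket is at most $M\|F-\tF\|_{L^1(\Omega)}$ by Lemma~\ref{lemma F1}. The second term is bounded by $\int_\Omega|\tJ||F-\tF|\leq M\|F-\tF\|_{L^1(\Omega)}$. Summing and multiplying by $\sigma_1$ gives $2M\sigma_1\|F-\tF\|_{L^1(\Omega)}$, which is the claim.

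The main obstacle is justifying the identity $\int_\Omega \tJ\cdot(D\tu+\tF)=\int_\Omega a|D\tu+\tF|$ when $D\tu$ is only a measure. I would appeal to the Anzellotti pairing $(\tJ\cdot D\tu)$ of \cite{GA1,GA2}, combined with Theorem~\ref{Structure1}, or more simply use the duality identity \eqref{dualityGap2} for the perturbed problem. The integration by parts also needs $\nabla\cdot\tJ=H\in L^n$, which holds under the standing boundedness of $H$.
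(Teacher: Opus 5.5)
Your proof is correct and follows essentially the same route as the paper: the pointwise bound via $|\tJ|\leq a$ and $0\leq\sigma\leq\sigma_1$, weak integration by parts with $\nabla\cdot\tJ=H$, the duality identity for the $\tF$-problem, and then Lemma~\ref{lemma F1} together with $|\tJ|\leq a$ on the leftover term, arriving exactly at the second line of the paper's chain. Your explicit term $\int_\Omega\tJ\cdot(\tF-F)\,dx$ is in fact the right way to reach that line, since the paper's displayed intermediate bound $\sigma_1\int_\Omega a|Du+F|+Hu-a|D\tu+F|-H\tu\,dx$ is nonpositive ($u$ minimizes the unperturbed functional and $\tu$ is a competitor) and so is a misprint, whereas your derivation lands on the valid second line directly.
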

{\bf Proof.}
Similar to the proof of Lemma 2.5 in \cite{MO},
\allowdisplaybreaks
    \begin{align*}
        \int_\Omega |J||\tJ|-J\cdot\tJ\,dx
        &\leq\sigma_1\int_\Omega a|Du+F|+Hu-a|D\tu+F|-H\tu\,dx\nl
        &\leq\sigma_1\int_\Omega a|Du+F|+Hu-a|D\tu+\tF|-H\tu+a|F-\tF|\,dx\nl
        &\leq 2M\sigma_1\|F-\tF\|_{L^1(\Omega)}
    \end{align*}
    where we have used Lemma \ref{lemma F1}.
\hfill $\Box$

The proof of the following theorem is based on an adaptation of the argument used in Theorem 2.6 of \cite{MO}.

\begin{theorem}\label{thm F1}
    Suppose that $0\leq\sigma(x)\leq\sigma_1$ for some constant $\sigma_1>0.$ Then
$$
    \|J-\tJ\|_{L^1(\Omega)}\leq (4M\sigma_1|\Omega|)^{1/2}\|F-\tF\|_{L^1(\Omega)}^{\frac{1}{2}}.
$$
\end{theorem}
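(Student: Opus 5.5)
The plan is to convert the bound of Lemma \ref{lemma F2} on $\int_\Omega |J||\tJ|-J\cdot\tJ\,dx$ into an $L^1$ bound on $J-\tJ$, by way of the pointwise identity
\[
|J-\tJ|^2=|J|^2+|\tJ|^2-2J\cdot\tJ=(|J|-|\tJ|)^2+2\big(|J||\tJ|-J\cdot\tJ\big).
\]
In the $F$-perturbation the weight $a$ is the same for both problems. Hence, on the set where both $Du+F$ and $D\tu+\tF$ are nonzero, we have $|J|=|\tJ|=a$, and the first term vanishes. In that case
\[
|J-\tJ|^2 = 2\big(|J||\tJ|-J\cdot\tJ\big)\quad\text{a.e. in }\Omega.
\]
This is the setting of Theorem 2.6 in \cite{MO}, where $|J|=|\tJ|$ was likewise used.

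Since $|J||\tJ|-J\cdot\tJ\ge 0$ pointwise, I would first apply Cauchy--Schwarz:
\[
\|J-\tJ\|_{L^1(\Omega)}\le |\Omega|^{1/2}\Big(\int_\Omega |J-\tJ|^2\,dx\Big)^{1/2}
= |\Omega|^{1/2}\Big(2\int_\Omega \big(|J||\tJ|-J\cdot\tJ\big)\,dx\Big)^{1/2}.
\]
Next I would insert the bound $2M\sigma_1\|F-\tF\|_{L^1(\Omega)}$ from Lemma \ref{lemma F2}. This gives
\[
\|J-\tJ\|_{L^1(\Omega)}\le (4M\sigma_1|\Omega|)^{1/2}\|F-\tF\|_{L^1(\Omega)}^{1/2},
\]
which is exactly the constant in the statement.

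The main obstacle is justifying $|J|=|\tJ|$ almost everywhere. Theorem \ref{Structure1} only gives $|J|=a$ on the support of $|Du+F|$; elsewhere we only know $|J|\le a$. On the set where $Du+F=0$ or $D\tu+\tF=0$, the identity gives only $|J-\tJ|^2\le (|J|-|\tJ|)^2+2(|J||\tJ|-J\cdot\tJ)$. There I would follow \cite{MO} and use the representation $J=\sigma(Du+F)$ with $|J|=a$ that the paper adopts after Theorem \ref{Structure1}, together with its analogue for $\tJ$ with the same $a$. Under the admissibility framework ($m\le|J|,|\tJ|\le M$ with $J=\sigma(\nabla u+F)$), the gradients are nonvanishing, so this set is null. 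If that fails in general, the fallback is to state the result under the same standing convention as Theorem 2.6 of \cite{MO}, namely $|J|=|\tJ|=a$ a.e.
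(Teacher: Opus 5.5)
This is essentially the paper's route: Theorem~\ref{thm F1} is obtained by adapting Theorem~2.6 of \cite{MO}, i.e.\ combining $|J-\tJ|^2=2(|J||\tJ|-J\cdot\tJ)$ (when $|J|=|\tJ|=a$) with Cauchy--Schwarz and Lemma~\ref{lemma F2}, which gives exactly $(4M\sigma_1|\Omega|)^{1/2}$. The obstacle you flag can be closed without admissibility or any condition on $\tsigma$: reading $\sigma=a/|Du+F|$, the bound $\sigma\le\sigma_1$ gives $|Du+F|\ge m/\sigma_1>0$ a.e., hence $|J|=a$ a.e., so $|\tJ|\le a$ alone yields $|J-\tJ|^2\le 2(a|J|-J\cdot\tJ)$ pointwise, and $\int_\Omega(a|J|-J\cdot\tJ)\,dx=\int_\Omega\sigma\bigl(a|Du+F|-(Du+F)\cdot\tJ\bigr)\,dx\le\sigma_1\int_\Omega\bigl(a|Du+F|-(Du+F)\cdot\tJ\bigr)\,dx\le 2M\sigma_1\|F-\tF\|_{L^1(\Omega)}$ by the same computation as in Lemma~\ref{lemma F2}.
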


\subsection{$L^1$ stability of the minimizers}

Similar to Section 2.2, we will assume that $F$ and $\tF$ are conservative vector
fields (i.e. $F=\nabla f$ and $\tF=\nabla\tf$ for some functions $f,\tf\in C^1(\Omega)\cap C(\bar\Omega).$ On $\bar\Omega,$ define $v:=u+f$ and $\tv:=\tu+\tf.$

\begin{theorem}\label{thm F2}
      Let $n=2,$ and suppose $u$ and $\tu$ are admissible with $u|_{\partial\Omega}=0=\tu|_{\partial\Omega}.$ If $v$ satisfies \eqref{crvbnd} then
    $$
        \|u-\tu\|_{L^1(\Omega)}\leq 
        C\cdot\max\left( \|f-\tf\|_{W^{1,1}(\Omega)}^{\frac{1}{2}},\,\|f-\tf\|_{W^{1,1}(\Omega)}\right)
    $$
    for some constant $C=C(\Omega,m, M,K,\sigma_0,\sigma_1)$ independent of $f,\,\tf,F,\text{ and }\tF.$
\end{theorem}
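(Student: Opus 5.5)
We follow the strategy of Theorem 3.3 in \cite{MO}: first pass to $v=u+f$ and $\tv=\tu+\tf$, then use the flux estimate of Theorem \ref{thm F1} along the level sets of $v$, and finally return to $u-\tu$.

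\textbf{Step 1 (reduction to $v-\tv$).} Since
\[
u-\tu=(v-\tv)-(f-\tf),
\]
we have
\[
\|u-\tu\|_{L^1(\Omega)}\le \|v-\tv\|_{L^1(\Omega)}+\|f-\tf\|_{L^1(\Omega)}.
\]
The second term is at most $\|f-\tf\|_{W^{1,1}(\Omega)}$. It therefore suffices to bound $\|v-\tv\|_{L^1(\Omega)}$ by a multiple of $\|F-\tF\|_{L^1}^{1/2}+\|f-\tf\|_{W^{1,1}}$. This explains the maximum of the two powers in the statement.

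\textbf{Step 2 (flux controls gradient directions).} Admissibility gives $\nabla v=J/\sigma$ and $\nabla\tv=\tJ/\tsigma$, with $|J|,|\tJ|\ge m$. Hence the unit normals $\nu=J/|J|$ and $\tilde\nu=\tJ/|\tJ|$ satisfy
\[
|\nu-\tilde\nu|\le \tfrac{2}{m}|J-\tJ|.
\]
Theorem \ref{thm F1} then gives
\[
\|\nu-\tilde\nu\|_{L^1(\Omega)}\le C\|F-\tF\|_{L^1(\Omega)}^{1/2}.
\]

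\textbf{Step 3 (level-set integration).} Let $w=v-\tv$. On $\partial\Omega$ we have $w=f-\tf$, so $w$ no longer vanishes there. For each $t$, integrate $\nabla w\cdot\nu$ along the $C^1$ level curve $\{v=t\}$. Since $\nabla v$ is parallel to $\nu$, we get
\[
\nabla w\cdot\nu=|\nabla v|-\nabla\tv\cdot\nu=|\nabla v|-|\nabla\tv|\,\nu\cdot\tilde\nu.
\]
As in \cite{MO}, the plan is to rewrite $w$ on $\{v=t\}$ by integrating its tangential derivative $\nabla w\cdot\tau=-\nabla\tv\cdot\tau$ along the curve from an endpoint on $\partial\Omega$. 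Since $\nabla\tv\parallel\tilde\nu$,
\[
|\nabla\tv\cdot\tau|\le|\nabla\tv|\,|\tilde\nu-\nu|\le \tfrac{M}{\sigma_0}|\tilde\nu-\nu|.
\]
For a.e.\ $t$ this yields, using the length bound \eqref{crvbnd},
\[
|w(x)|\le |(f-\tf)(x_t)|+\tfrac{M}{\sigma_0}\int_{\{v=t\}}|\nu-\tilde\nu|\,dl ,
\]
where $x_t\in\partial\Omega$ is the endpoint of the curve. (A closed level curve needs a base point instead, which we expect to handle as in \cite{MO}.) Integrating over the curve, which has length at most $K$, and then over $t$ via the coarea formula with weight $|\nabla v|\ge m/\sigma_1$, converts the curve integrals into volume integrals. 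The $\nu-\tilde\nu$ term then contributes at most $C\|\nu-\tilde\nu\|_{L^1}$.

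\textbf{Main obstacle: the boundary term.} The hard part is the contribution $\int |(f-\tf)(x_t)|\,dt$ of the boundary trace. We plan to bound it by $C\|f-\tf\|_{L^1(\partial\Omega)}$, using $|\nabla v|\le M/\sigma_0$ to control how the level parameter $t$ moves along $\partial\Omega$. The trace inequality then gives
\[
\|f-\tf\|_{L^1(\partial\Omega)}\le C_\Omega\|f-\tf\|_{W^{1,1}(\Omega)}.
\]
Making this parametrization of $\partial\Omega$ by $t$ rigorous is the delicate step: its Jacobian must be controlled using only $|\nabla v|\le M/\sigma_0$ and \eqref{crvbnd}.

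\textbf{Assembly.} Combining the estimates, and using $\|F-\tF\|_{L^1}\le\|f-\tf\|_{W^{1,1}}$, gives
\[
\|u-\tu\|_{L^1(\Omega)}\le C\left(\|f-\tf\|_{W^{1,1}}^{1/2}+\|f-\tf\|_{W^{1,1}}\right)\le 2C\max(\cdot,\cdot),
\]
which is the claimed bound.
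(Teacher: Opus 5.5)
Your outline is sound, but at the decisive step it takes a different and harder route than the paper. The paper never integrates $w=v-\tv$ along level curves. On a component $\Gamma_t$ of $\{v=t\}\cap\Omega$, parametrized by arc length from $\gamma(0)\in\partial\Omega$, it integrates $h(s)=u(\gamma(s))-\tu(\gamma(s))$, which satisfies $h(0)=0$ because $u=\tu=0$ on $\partial\Omega$. Using $\frac{J}{\tsigma}\cdot\gamma'=\frac{\sigma}{\tsigma}\nabla v\cdot\gamma'=0$ and $\nabla\tv=\tJ/\tsigma$ gives $h'=\frac{J-\tJ}{\tsigma}\cdot\gamma'-(F-\tF)\cdot\gamma'$. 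Hence $\max_{\Gamma_t}|u-\tu|\le\frac{1}{\sigma_0}\int_{\Gamma_t}|J-\tJ|\,dl+\int_{\Gamma_t}|F-\tF|\,dl$. After the coarea formula the new $|F-\tF|$ term costs only $\frac{KM}{\sigma_0}\|F-\tF\|_{L^1(\Omega)}$, and no boundary term ever arises.

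Your ``main obstacle'' is therefore an artifact of working with $v-\tv$, whose boundary values $f-\tf$ need not vanish. It can still be closed, and only an upper bound on the Jacobian is needed. Your pointwise estimate gives $\int_{\{v=t\}\cap\Omega}|w|\,dl\le K\bigl(\max_{\partial\Omega\cap\{v=t\}}|f-\tf|+\frac{M}{\sigma_0}\int_{\{v=t\}\cap\Omega}|\nu-\tilde\nu|\,dl\bigr)$, which also avoids any bookkeeping of which endpoint belongs to which component. Bound the maximum by the sum over $\partial\Omega\cap\{v=t\}$. Then apply the one-dimensional area formula to $v|_{\partial\Omega}$, whose tangential derivative satisfies $|\partial_\tau v|\le M/\sigma_0$ since $v\in C^1(\bar\Omega)$. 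This yields
\[
\int_\R\sum_{x\in\partial\Omega\cap\{v=t\}}|(f-\tf)(x)|\,dt=\int_{\partial\Omega}|f-\tf|\,|\partial_\tau v|\,d\mathcal{H}^1\le\frac{M}{\sigma_0}\,\|f-\tf\|_{L^1(\partial\Omega)},
\]
after which the trace inequality finishes as you intended.

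Comparing the two: the paper's choice exploits the common zero boundary data and needs neither a trace theorem nor any control of $v$ along $\partial\Omega$. In fact, applying the coarea formula directly to $|\nabla v|\,|u-\tu|$ shows that the $f-\tf$ terms are removable in the paper's route. That leaves a bound in $\|F-\tF\|_{L^1(\Omega)}$ alone, as one expects since the functional only sees $F$ and $\tF$. Your level-set step would tolerate nonzero boundary values of $u-\tu$, since they would simply join the boundary term. Here, though, it buys nothing, and it produces a bound depending on the values of $f-\tf$ on $\partial\Omega$, which are fixed only up to the arbitrary additive constants in $f$ and $\tf$.
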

{\bf Proof.}
    The argument follows the same strategy as the proof of Theorem 2.10 in \cite{MO}. Since $u$ is admissible,
    $$
        0<\frac{m}{\sigma_1}\leq|\nabla u+F|=|\nabla v|
    $$
    for all $x\in\Omega.$ From the coarea formula, it follows that
    \begin{equation}\label{i2}
        \frac{m}{\sigma_1}\int_\Omega|u-\tu|-|f-\tf|\,dx\leq
        \frac{m}{\sigma_1}\int_\Omega|v-\tv|\,dx\leq\int_\Omega|\nabla v||v-\tv|\,dx=\int_\R\int_{\{v=t\}\cap\Omega}|v-\tv|\,dl\,dt.
    \end{equation}
    Let $\Gamma_t$ be a connected component of $\{x\in\Omega:v(x)=t\}$ and let $\gamma:[0,L]\to\Gamma_t$ be a path parameterized by the arc length of $\Gamma_t$ with $\gamma(0)\in\partial\Omega.$ Now define $h:[0,L]\to\R$ by
    $$
        h(s)=u(\gamma(s))-\tu(\gamma(s)).
    $$
    Then $h(0)=0.$ Since $v$ is constant on $\Gamma_t,$ then $0=\frac{d}{ds}v(\gamma(s))=\nabla v(\gamma(s))\cdot\gamma'(s)$ on $\Gamma_t.$ Therefore,
    \begin{align*}
        h'(s)
        &=\nabla u(\gamma(s))\cdot\gamma'(s)-\nabla \tu(\gamma(s))\cdot\gamma'(s)\nl
        &=\nabla v(\gamma(s))\cdot\gamma'(s)-\nabla \tv(\gamma(s))\cdot\gamma'(s)-\big(F(\gamma(s))-\tF(\gamma(s))\big)\cdot\gamma'(s)\nl
        &=\frac{\sigma(\gamma(s))}{\tsigma(\gamma(s))}\nabla v(\gamma(s))\cdot\gamma'(s)-\nabla \tv(\gamma(s))\cdot\gamma'(s)-\big(F(\gamma(s))-\tF(\gamma(s))\big)\cdot\gamma'(s)\nl
        &=\frac{J(\gamma(s))-\tJ(\gamma(s))}{\tsigma(\gamma(s))}\cdot\gamma'(s)-\big(F(\gamma(s))-\tF(\gamma(s))\big)\cdot\gamma'(s).
    \end{align*}
    Note that there exists $x_t^*$ on $\Gamma_t$ such that
    $$
        |u(x_t^*)-\tu(x_t^*)|=\max\limits_{x\in\Gamma_t}|u(x)-\tu(x)|.
    $$
    Furthermore, there exists $s_0\in[0,L]$ such that $x_t^*=\gamma(s_0).$ Then
    \begin{align*}
        |u(x_t^*)-\tu(x_t^*)|
        &=|h(s_0)|\nl
        &=\left|\int_0^{s_0}h'(r)\,dr\right|\nl
        &\leq\recip{\sigma_0}\int_0^L|J(\gamma(r))-\tJ(\gamma(r))|\,dr+\int_0^L|F(\gamma(r))-\tF(\gamma(r))|\,dr\nl
        &=\recip{\sigma_0}\int_{\Gamma_t}|J-\tJ|\,dl+\int_{\Gamma_t}|F-\tF|\,dl.
    \end{align*}
    Thus,
    \begin{align*}
        \int_{\Gamma_t}|v-\tv|\,dl
        &\leq\int_{\Gamma_t}|u-\tu|+|f-\tf|\,dl\nl
        &\leq |u(x_t^*)-\tu(x_t^*)|\int_{\Gamma_t}\,dl+\int_{\Gamma_t}|f-\tf|\,dl\nl
        &\leq K\left(\recip{\sigma_0}\int_{\Gamma_t}|J-\tJ|\,dl+\int_{\Gamma_t}|F-\tF|\,dl\right)+\int_{\Gamma_t}|f-\tf|\,dl
    \end{align*}
    and therefore,
    $$
        \int_{\{v=t\}\cap\Omega}|v-\tv|\,dl
        \leq\frac{K}{\sigma_0}\int_{\{v=t\}\cap\Omega}|J-\tJ|\,dl+\max(K,1)\int_{\{v=t\}\cap\Omega}|F-\tF|+|f-\tf|\,dl.
    $$
    By the coarea formula and Theorem \ref{thm F1},
    \begin{align*}
        &\int_\R\int_{\{v=t\}\cap\Omega}|v-\tv|\,dl\,dt\nl
        &\leq\frac{K}{\sigma_0}\int_\R\int_{\{v=t\}\cap\Omega}|J-\tJ|\,dl\,dt+\max(K,1)\Int_\R\int_{\{v=t\}\cap\Omega}|F-\tF|+|f-\tf|\,dl\,dt\nl
        &=\frac{K}{\sigma_0}\int_\Omega|\nabla v||J-\tJ|\,dx+\max(K,1)\Int_{\Omega}|\nabla v|\big(|F-\tF|+|f-\tf|\big)\,dx\nl
        &\leq\frac{KM}{\sigma_0^2}\int_\Omega|J-\tJ|\,dx+\frac{\max(K,1)\cdot M}{\sigma_0}\Int_{\Omega}|F-\tF|+|f-\tf|\,dx\nl
        &\leq\frac{KM(4M\sigma_1|\Omega|)^{1/2}}{\sigma_0^2}\|F-\tF\|_{L^1(\Omega)}^{\frac{1}{2}}+\frac{\max(K,1)\cdot M}{\sigma_0}\|f-\tf\|_{W^{1,1}(\Omega)}\nl
        &\leq C \left( \|f-\tf\|_{W^{1,1}(\Omega)}^{\frac{1}{2}} +\|f-\tf\|_{W^{1,1}(\Omega)}\right).
    \end{align*}
    for some constant $C=C(\Omega,M,K,\sigma_0,\sigma_1).$ By \eqref{i2}, the proof is complete.
    \hfill$\Box$

\begin{theorem}\label{thm F3}
    Let $n=3$ and suppose that $u$ and $\tu$ are admissible with $u|_{\partial\Omega}=\tu|_{\partial\Omega}=0.$ Suppose that the level sets of $v$ can be foliated to one-dimensional curves as in Definition 3.4 in \cite{MO}. Then
$$
    \|u-\tu\|_{L^1(\Omega)}\leq C\cdot\max\left( \|f-\tf\|_{W^{1,1}(\Omega)}^{\frac{1}{2}},\,\|f-\tf\|_{W^{1,1}(\Omega)}\right)
$$
for some constant $C(\Omega, m,M,K,\sigma_0,\sigma_1,c_g,C_g)$ independent of $\tu$ and $\tsigma.$
\end{theorem}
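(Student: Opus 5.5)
The plan is to adapt the proof of Theorem \ref{thm F2} to three dimensions, in the same way Theorem 3.5 of \cite{MO} adapts the two-dimensional argument. The foliation hypothesis (Definition 3.4 in \cite{MO}) is used to reduce the two-dimensional level sets of $v$ to one-dimensional curves.

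\textbf{Step 1: reduce to a coarea integral.} Admissibility gives $|\nabla v|=|\nabla u+F|\geq m/\sigma_1>0$. Exactly as in \eqref{i2}, the coarea formula then yields
\[
\frac{m}{\sigma_1}\int_\Omega \big(|u-\tu|-|f-\tf|\big)\,dx\leq\int_\R\int_{\{v=t\}\cap\Omega}|v-\tv|\,d\mathcal{H}^2\,dt .
\]

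\textbf{Step 2: foliate each level set.} For a.e.\ $t$, the level set $\{v=t\}\cap\Omega$ is a $C^1$ surface. By Definition 3.4 in \cite{MO}, it is foliated by curves $\gamma_{t,\tau}$ with the following properties:
\begin{itemize}
\item each curve starts on $\partial\Omega$ and has length bounded by $K$;
\item the surface measure disintegrates as $d\mathcal{H}^2=\rho\,dl\,d\tau$, with the density $\rho$ (the Jacobian of the foliation map) satisfying $c_g\leq\rho\leq C_g$.
\end{itemize}
Along each curve, set $h(s)=(u-\tu)(\gamma(s))$, so that $h(0)=0$. Since $\nabla v\cdot\gamma'=0$ along the curve, the same computation as in Theorem \ref{thm F2} gives
\[
h'(s)=\frac{J-\tJ}{\tsigma}\cdot\gamma'-(F-\tF)\cdot\gamma'.
\]
Consequently
\[
\sup_{\gamma}|u-\tu|\leq\frac{1}{\sigma_0}\int_\gamma|J-\tJ|\,dl+\int_\gamma|F-\tF|\,dl,
\]
and therefore
\[
\int_\gamma|v-\tv|\,dl\leq \frac{K}{\sigma_0}\int_\gamma|J-\tJ|\,dl+K\int_\gamma|F-\tF|\,dl+\int_\gamma|f-\tf|\,dl.
\]

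\textbf{Step 3: integrate back over the foliation.} Integrate the curve estimate in $\tau$ with respect to $\rho\,d\tau$. Using the bounds $c_g\leq\rho\leq C_g$ to pass between $\rho\,dl\,d\tau$ and $dl\,d\tau$, this gives
\[
\int_{\{v=t\}\cap\Omega}|v-\tv|\,d\mathcal{H}^2\leq C\int_{\{v=t\}\cap\Omega}\big(|J-\tJ|+|F-\tF|+|f-\tf|\big)\,d\mathcal{H}^2,
\]
with $C=C(K,\sigma_0,c_g,C_g)$. Now integrate in $t$ and apply the coarea formula once more. The weight that appears is $|\nabla v|=|J|/\sigma\leq M/\sigma_0$. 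Theorem \ref{thm F1} bounds $\|J-\tJ\|_{L^1(\Omega)}$ by $C\|F-\tF\|_{L^1(\Omega)}^{1/2}\leq C\|f-\tf\|_{W^{1,1}(\Omega)}^{1/2}$. The remaining terms are bounded by $\|f-\tf\|_{W^{1,1}(\Omega)}$. Combining this with Step 1 and absorbing $\int_\Omega|f-\tf|$ into the $W^{1,1}$ norm gives the claimed bound with the maximum of the two powers.

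\textbf{Main obstacle.} The delicate step is the disintegration in Step 3. We must check that the foliation structure of Definition 3.4 in \cite{MO} controls the Jacobian uniformly in $t$, so that the constant depends only on $c_g$ and $C_g$. We must also check that every leaf actually reaches $\partial\Omega$, so that $h(0)=0$ is available on each curve. I would take both facts directly from the setup of Theorem 3.5 in \cite{MO}; that setup transfers verbatim here because the only new ingredient, the term $(F-\tF)\cdot\gamma'$, is handled pointwise along each curve.
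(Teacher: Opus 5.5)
Your proposal is correct and follows essentially the same route as the paper: a coarea formula in $v$, a second coarea on each level surface along the leaves $\{v=t\}\cap\{g_t=r\}$, the one-dimensional estimate from Theorem \ref{thm F2} on each leaf, and then both coarea steps run in reverse before invoking Theorem \ref{thm F1}. The only slip is cosmetic: in the paper's setup the disintegration density is $\rho=1/|\nabla g_t|$ with $c_g\leq|\nabla g_t|\leq C_g$, so the bounds should read $1/C_g\leq\rho\leq 1/c_g$, which leaves the dependence of the constant unchanged.
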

{\bf Proof.}
    The proof proceeds along the same lines as that of Theorem \ref{thm F2}. Since $u$ is admissible, then
    \begin{equation}\label{II1}
        \frac{m}{\sigma_1}\int_\Omega|u-\tu|-|f-\tf|\,dx\leq\frac{m}{\sigma_1}\int_\Omega|v-\tv|\,dx
        \leq\int_\Omega|\nabla v||v-\tv|\,dx
        =\int_\R\int_{\{v=t\}\cap\Omega}|v-\tv|\,dS\,dt
    \end{equation}
    by the coarea formula. Consider $g_t$ from Definition 3.4 in \cite{MO}. By the coarea formula, 
    \begin{align}\label{II2}
    \begin{split}
        \int_\R\int_{\{v=t\}\cap\Omega}|v-\tv|\,dS\,dt
        &=\int_\R\int_\R\int_{\{v=t\}\cap\{g_t=r\}\cap\Omega}\recip{|\nabla g_t|}|v-\tv|\,dl\,dr\,dt\nl
        &\leq\recip{c_g}\int_\R\int_\R\int_{\{v=t\}\cap\{g_t=r\}\cap\Omega}|v-\tv|\,dl\,dr\,dt.
    \end{split}
    \end{align}
    Similar to the proof of Theorem \ref{thm F2},
    \begin{align}\label{II3}
    \begin{split}
        &\int_{\{v=t\}\cap\{g_t=r\}\cap\Omega}|v-\tv|\,dl\nl
        &\leq K\left(\recip{\sigma_0}\int_{\{v=t\}\cap\{g_t=r\}\cap\Omega}|J-\tJ|\,dl+\int_{\{v=t\}\cap\{g_t=r\}\cap\Omega}|F-\tF|\,dl\right)+\int_{\{v=t\}\cap\{g_t=r\}\cap\Omega}|f-\tf|\,dl.
    \end{split}
    \end{align}
    By the coarea formula and Theorem \ref{thm F1},
    \begin{align}\label{II4}
    \begin{split}
        \int_\R\int_\R\int_{\{v=t\}\cap\{g_t=r\}\cap\Omega}|J-\tJ|\,dl\,dr\,dt
        &=\int_\R\int_{\{v=t\}\cap\Omega}|\nabla g_t||J-\tJ|\,dS\,dt\\
        &\leq C_g\int_\R\int_{\{v=t\}\cap\Omega}|J-\tJ|\,dS\,dt\\
        &=C_g\int_\Omega|\nabla v||J-\tJ|\,dx\\
        &\leq\frac{C_g M}{\sigma_0}\int_\Omega|J-\tJ|\,dx\\
        &\leq\frac{C_g M}{\sigma_0}(4M\sigma_1|\Omega|)^{1/2}\|F-\tF\|_{L^1(\Omega)}^{\frac{1}{2}}.
    \end{split}
    \end{align}
    Similarly,
    \begin{equation}\label{II5}
        \int_\R\int_\R\int_{\{v=t\}\cap\{g_t=r\}\cap\Omega}|F-\tF|\,dl\,dr\,dt
        \leq\frac{C_g M}{\sigma_0}\|F-\tF\|_{L^1(\Omega)}
    \end{equation}
    and
    \begin{equation}\label{II6}
        \int_\R\int_\R\int_{\{v=t\}\cap\{g_t=r\}\cap\Omega}|f-\tf|\,dl\,dr\,dt
        \leq\frac{C_g M}{\sigma_0}\|f-\tf\|_{L^1(\Omega)}
    \end{equation}
    By \eqref{II1} through \eqref{II6}, the proof is complete.
    \hfill$\Box$

\subsection{$W^{1,1}$ stability of the minimizers}

In this section, we prove the stability of the minimizers of \eqref{mainmin0} in $W^{1,1}$ with respect to $F$. 

\begin{lemma}\label{lemma F3}
    Let $n=2,3,$ and suppose that $u,\tu\in H_0^1(\Omega)$ are admissible. Suppose $\partial\Omega$ is of class $C^3,$ $F,\tF\in C^2(\bar\Omega),$ $H\in C^1(\bar\Omega),$ and $\sigma,\tsigma\in C^{2,1}(\bar\Omega)$ 
\begin{equation}\label{FHineq3}
    \|F\|_{C^2(\Omega)},\|\tF\|_{C^2(\Omega)},\|H\|_{C^1(\Omega)}\leq k_2
\end{equation}
and
\begin{equation}\label{sigmaineq3}
    \|\sigma\|_{C^2(\Omega)},\|\tsigma\|_{C^2(\Omega)}\leq\sigma_2
\end{equation}
for some $k_2,\sigma_2>0.$ Let
\begin{equation}\label{G2}
    G(x)=\frac{\tJ(x)-J(x)}{\tsigma(x)}+F(x)-\tF(x),\,x\in\Omega,
\end{equation}
with $G=(G_1,G_2)$ for $n=2$ and $G=(G_1,G_2,G_3)$ for $n=3.$ Then
$$
    \|\nabla G_i\|_{L^1(\Omega)}\leq C\big(\|J-\tJ\|_{L^1(\Omega)}^{1/2}+\|F-\tF\|_{L^1(\Omega)}^{1/2}\big),\hspace{0.1in}1\leq i\leq n
$$
for some constant $C\left(\Omega,M,k_2,\sigma_0,\sigma_2\right).$
\end{lemma}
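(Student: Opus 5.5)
Each component $G_i$ is bounded in a strong norm, and the target is a square-root rate in terms of $\|G_i\|_{L^1}$, controlled by $\|J-\tJ\|_{L^1}$ and $\|F-\tF\|_{L^1}$. This points to the interpolation inequality
\[
\|\nabla g\|_{L^1(\Omega)}\leq C\|g\|_{L^1(\Omega)}^{1/2}\|g\|_{W^{2,1}(\Omega)}^{1/2}
\]
(or a Gagliardo--Nirenberg type inequality $\|\nabla g\|_{L^1}\le C(\|g\|_{L^1}^{1/2}\|D^2g\|_{L^1}^{1/2}+\|g\|_{L^1})$) on the $C^3$ domain $\Omega$, applied to $g=G_i$. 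This is presumably how the analogous lemma in \cite{MO} (Lemma 4.1 there) was proved. The $C^3$ boundary and the $C^{2,1}$ regularity of $\sigma,\tsigma$ are what make the $W^{2,1}$ (indeed $C^2$) norm of $G$ available up to the boundary.

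First I bound $\|G_i\|_{L^1}$. Since $\tsigma\geq\sigma_0$,
\[
\|G_i\|_{L^1(\Omega)}\leq \frac{1}{\sigma_0}\|J-\tJ\|_{L^1(\Omega)}+\|F-\tF\|_{L^1(\Omega)}.
\]
Second, I bound $\|G_i\|_{W^{2,1}}$, or even $\|G_i\|_{C^2}$, by a constant $C(\Omega,M,k_2,\sigma_0,\sigma_2)$. For this I rewrite $G$ using $J=\sigma(\nabla u+F)$ and $\tJ=\tsigma(\nabla\tu+\tF)$:
\[
G=\nabla\tu+\tF-\frac{\sigma}{\tsigma}(\nabla u+F)+F-\tF=\nabla \tu-\frac{\sigma}{\tsigma}(\nabla u+F)+F .
\]
The quotient $\sigma/\tsigma$ has $C^2$ norm controlled by $\sigma_0$ and $\sigma_2$, and $F$ is controlled by $k_2$. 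It remains to control $\nabla u$ and $\nabla \tu$ in $C^2$, i.e.\ $u,\tu$ in $C^3(\bar\Omega)$. Here I use the Euler--Lagrange equation from Theorem \ref{Structure1}. Since $\nabla\cdot J=H$, the function $u$ solves the uniformly elliptic linear Dirichlet problem
\[
\nabla\cdot(\sigma\nabla u)=H-\nabla\cdot(\sigma F),\qquad u|_{\partial\Omega}=0,
\]
with $\sigma\in C^{2,1}$, $\sigma\geq\sigma_0$. The right-hand side lies in $C^{0,1}$ (in fact $C^1$) with norm bounded by $k_2$ and $\sigma_2$. Schauder theory on the $C^3$ domain gives $\|u\|_{C^{2,\alpha}}\leq C$. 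Differentiating the equation, or applying higher-order Schauder estimates with coefficients in $C^{2,1}$ and data in $C^{1,\alpha}$, upgrades this to $C^{3,\alpha}$. The maximum principle, or the bound $|J|\le M$, controls the lower-order term $\|u\|_{C^0}$. The same argument applies to $\tu$ with $\tsigma,\tF$.

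Combining the two steps with the interpolation inequality, and using $\sqrt{x+y}\le\sqrt x+\sqrt y$, gives
\[
\|\nabla G_i\|_{L^1}\le C\big(\|J-\tJ\|_{L^1}^{1/2}+\|F-\tF\|_{L^1}^{1/2}\big).
\]
If the interpolation is applied without lower-order terms, an additive $\|G_i\|_{L^1}$ could appear. It is absorbed because $\|G_i\|_{L^1}$ is bounded: $|J|,|\tJ|\le M$ gives $\|G_i\|_{L^1}\le 2M|\Omega|/\sigma_0+2k_2|\Omega|$, so $\|G_i\|_{L^1}\le C\|G_i\|_{L^1}^{1/2}$.

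The main obstacle is the uniform $C^3$ (or $W^{3,1}$) bound on $u,\tu$ near the boundary. The stated hypotheses involve only $C^2$ norms of $\sigma$ and $F$, so the higher Schauder estimate is borderline and may require the full $C^{2,1}$ assumption, with a constant depending on $\|\sigma\|_{C^{2,1}}$. If that fails, I would fall back on an interpolation requiring only $W^{2,1}$ control of $G$, and hence only $W^{3,1}$ (or $C^{2,1}$) control of $u$. That would follow from $C^{2,\alpha}$ Schauder estimates plus one differentiated equation in $L^p$ form, i.e.\ $W^{3,p}$ Calderón--Zygmund estimates.
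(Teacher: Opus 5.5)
Your proposal is correct and takes the paper's route. The paper invokes, from Lemma 2.14 of \cite{MO}, the estimate $\|\nabla G_i\|_{L^1(\Omega)}\leq C_1\big(\|G_i\|_{L^1(\Omega)}^{1/2}+\|G_i\|_{L^1(\Omega)}\big)$, which is what your interpolation-plus-regularity step produces; it then bounds $\|G_i\|_{L^1}$ by $\sigma_0^{-1}\|J-\tilde J\|_{L^1}+\|F-\tilde F\|_{L^1}$ and absorbs the linear term via $\|J-\tilde J\|_{L^1}\le 2M|\Omega|$ and $\|F-\tilde F\|_{L^1}\le 2k_2|\Omega|$, exactly as you do.

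One caveat: $H$ and $\nabla\cdot F$ are only $C^1$, so the data are not $C^{1,\alpha}$ and your $C^{3,\alpha}$ upgrade is unavailable. The argument is carried by your fallback: $W^{3,p}$ (or $H^3$) elliptic estimates give a uniform $W^{2,1}$ bound on $G_i$, which is all the interpolation needs.
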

{\bf Proof.}
Similar to the proof of Lemma 2.14 in \cite{MO},
    \begin{equation}\label{k1}
        \|\nabla G_i\|_{L^1(\Omega)}\leq C_1\left(\|G_i\|_{L^1(\Omega)}^{1/2}+\|G_i\|_{L^1(\Omega)}\right)
    \end{equation}
    for some $C_1$ depending on $k_2, \sigma_0,\sigma_2,$ and $\Omega.$ Combining \eqref{k1} with
    \begin{align*}
        \|G_i\|_{L^1(\Omega)}
        &\leq\frac{\|J-\tJ\|_{L^1(\Omega)}}{\sigma_0}+\|F-\tF\|_{L^1(\Omega)}\nl
        &\leq\frac{(2M|\Omega|)^{1/2}\|J-\tJ\|_{L^1(\Omega)}^{1/2}}{\sigma_0}+(2k_2|\Omega|)^{1/2}\|F-\tF\|_{L^1(\Omega)}^{1/2},
    \end{align*}
    we obtain the desired result.
\hfill$\Box$

\begin{theorem}\label{thm F4}
    Let $n=2$ and suppose that $u$ and $\tu$ are admissible with $u|_{\partial\Omega}=\tu|_{\partial\Omega}=0.$ Suppose $F,\tF\in C^2(\bar\Omega),\,H\in C^1(\bar\Omega),\,\sigma,\tsigma\in C^2(\bar\Omega)$ and satisfy \eqref{FHineq3} and \eqref{sigmaineq3}. If the level sets of $v$ are well-structured in the sense of Definition 4.2 in \cite{MO}, then
\begin{equation}
    \|\nabla u-\nabla\tu\|_{L^1(\Omega)}\leq C\|F-\tF\|_{L^1(\Omega)}^{1/4},
\end{equation}
for some constant $C=C(\Omega,m,M,k_2,K,\tK,\sigma_0,\sigma_1,\sigma_2)$ independent of $\tu$ and $\tsigma.$
\end{theorem}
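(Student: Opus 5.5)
The plan is to follow the proof of Theorem 4.3 in \cite{MO}, working with the vector field $G$ of Lemma \ref{lemma F3} and the level-set structure of $v=u+f$. First we relate $\nabla u-\nabla\tu$ to $G$. Since $J=\sigma\nabla v$ and $\tJ=\tsigma\nabla\tv$, we have
\[
\nabla u-\nabla\tu=\nabla v-\nabla\tv-(F-\tF)=\frac{J}{\sigma}-\frac{\tJ}{\tsigma}-(F-\tF).
\]
We write this as
\[
\nabla u-\nabla\tu=-G+J\Big(\frac1\sigma-\frac1\tsigma\Big).
\]
The first term is controlled in $L^1$ by $\|J-\tJ\|_{L^1}/\sigma_0+\|F-\tF\|_{L^1}$, which by Theorem \ref{thm F1} is at most $C\|F-\tF\|_{L^1}^{1/2}$. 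Everything therefore reduces to bounding $\sigma-\tsigma$ in $L^1$, or equivalently the component of $\nabla u-\nabla\tu$ normal to the level sets of $v$.

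For the tangential part we use the computation from the proof of Theorem \ref{thm F2}. Along an arc-length parametrized level curve $\gamma$ of $v$, the function $h=u-\tu$ satisfies $h'(s)=-G(\gamma(s))\cdot\gamma'(s)$. So the tangential derivative of $u-\tu$ along $\{v=t\}$ is exactly $-G\cdot\tau$, where $\tau=\nabla^\perp v/|\nabla v|$, and its $L^1$ norm is bounded as above.

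For the normal part we use the well-structured hypothesis (Definition 4.2 in \cite{MO}), which gives uniform control of the geometry of the level curves and of transversal curves across them. The normal derivative $\partial_\nu(u-\tu)$ with $\nu=\nabla v/|\nabla v|$ can then be recovered from the boundary condition $u-\tu=0$ on $\partial\Omega$ together with the derivatives of the tangential data. Concretely, differentiating $G\cdot\tau$ in the normal direction, or using the compatibility $\partial_\nu(\partial_\tau w)-\partial_\tau(\partial_\nu w)=$ (curvature terms)$\cdot\nabla w$ for $w=u-\tu$, we integrate along level curves starting at the boundary point where the normal derivative is pinned down. This bounds $\|\partial_\nu(u-\tu)\|_{L^1}$ by a constant, depending on $K,\tK$, the structure constants and $\sigma_2$, times $\|\nabla G_i\|_{L^1}+\|G\|_{L^1}$. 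Lemma \ref{lemma F3} gives $\|\nabla G_i\|_{L^1}\le C(\|J-\tJ\|_{L^1}^{1/2}+\|F-\tF\|_{L^1}^{1/2})$, and Theorem \ref{thm F1} gives $\|J-\tJ\|_{L^1}^{1/2}\le C\|F-\tF\|_{L^1}^{1/4}$. The $\|F-\tF\|_{L^1}^{1/2}$ term is dominated by $C\|F-\tF\|_{L^1}^{1/4}$ because $\|F-\tF\|_{L^1}\le 2k_2|\Omega|$. Combining the tangential and normal bounds via the coarea formula, with $m/\sigma_1\le|\nabla v|\le M/\sigma_0$, yields the claimed rate $1/4$.

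The main obstacle is the normal-derivative step. We must check that the well-structured level-set argument of \cite{MO}, which was written for $F=\tF$, still works when $\tv=\tu+\tf$ has level sets that differ from those of $v$. This is why we organise everything along the level sets of $v$ only, and let $F-\tF$ enter solely through $G$, which Lemma \ref{lemma F3} already accounts for. We also need the constants from the curvature of the level curves of $v$ to be controlled by $\sigma_2$, $k_2$ and the structure constants.
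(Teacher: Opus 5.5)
Your proposal is viable, but it takes a genuinely different route from the paper. Your reduction is correct: $\nabla u-\nabla\tu=-G+J\big(\frac{1}{\sigma}-\frac{1}{\tsigma}\big)$, and the derivative of $w=u-\tu$ along the level curves of $v$ is $-G\cdot\tau$. So only the normal component (equivalently $\sigma-\tsigma$) requires $\nabla G$.

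The paper never splits into tangential and normal parts. Since $\tu-u$ vanishes at the boundary endpoint $y$ of the level curve $\gamma$ through $x$, it writes $(\tu-u)(x)=\int_0^{s_0}G(\gamma(s))\cdot\gamma'(s)\,ds$. It does the same at $x+th$ along the nearby level curve $\gamma_t$ from Definition 4.2 of \cite{MO}, and then differentiates in $t$. This gives, for every direction $h$ at once,
\[
|\nabla\tu(x)-\nabla u(x)|\le\int_\gamma\Big(\frac{\tK}{\sigma_0}|J-\tJ|+\tK|F-\tF|\Big)\,dl+\|B\|_{L^\infty(\Omega)}\int_\gamma\big(|\nabla G_1|+|\nabla G_2|\big)\,dl .
\]
The coarea formula, Lemma \ref{lemma F3} and Theorem \ref{thm F1} then finish exactly as in your last step. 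The Dirichlet condition enters only through $(\tu-u)(y_t)=0$, and all the geometry is packaged in $\tK$ and $\|B\|_{L^\infty}$. The endpoint contribution $G(y)\cdot B_{x,h}(0)$ is never isolated: it sits inside $\int_0^{s_0}G\cdot\frac{\gamma_t'-\gamma'}{t}\,ds$, which is bounded by $\tK\int_\gamma|G|$.

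Your transport argument can be closed, but the step you flag carries all the content, and it needs four ingredients that the paper's first-variation formula avoids.

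(i) The commutator $[\nu,\tau]w$ and $\partial_\nu(G\cdot\tau)$ involve the curvatures of the level curves and gradient lines of $v$, hence $\nabla^2 v$. You must bound these separately, e.g. by Schauder estimates for $\nabla\cdot(\sigma\nabla u)=H-\nabla\cdot(\sigma F)$, using the $C^3$ boundary and $C^{2,1}$ coefficients.

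(ii) $[\nu,\tau]w$ contains $\partial_\nu w$ itself. So along each level curve you get a linear ODE for $\partial_\nu w$, and you need Gronwall, which costs a factor $e^{CK}$.

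(iii) The boundary value is neither zero nor free. Since $\nabla w(y)\parallel\nu_\Omega(y)$, one has $\partial_\nu w(y)=-(G\cdot\tau)(y)\,\frac{\nu_\Omega\cdot\nu}{\nu_\Omega\cdot\tau}(y)$. You therefore need a uniform lower bound on $|\nu_\Omega\cdot\tau|$ at the endpoints. This must be extracted from the well-structured hypothesis, e.g. from $\nabla v(y)\cdot B_{x,h}(0)=\nabla v(x)\cdot h$ with $h=\nu(x)$. That identity gives $|\nu_\Omega\cdot\tau|(y)\ge m\sigma_0/(M\sigma_1\|B\|_{L^\infty})$.

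(iv) After the coarea formula, these endpoint values produce a term of size $\int_{\partial\Omega}|G|\,|\nabla v|\,d\mathcal{H}^1$. This is not among the quantities you list. It is controlled by the trace inequality $\|G\|_{L^1(\partial\Omega)}\le C_\Omega\|G\|_{W^{1,1}(\Omega)}$, after which Lemma \ref{lemma F3} and Theorem \ref{thm F1} give the rate $1/4$.

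In exchange, your route makes explicit that the tangential part is controlled at rate $1/2$. It also shows that Theorems \ref{thm F4} and \ref{thm F6} are essentially equivalent. The paper's argument is shorter and handles all directions and the boundary condition in one stroke.
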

{\bf Proof.}
    Fix $x\in\Omega$ and $h\in S^1$ and define
    $$
        \CL(x,h)
        :=(\nabla\tu(x)-\nabla u(x))\cdot h
        =\lim\limits_{t\to0}\Frac{[\tu(x+th)-u(x+th)]-[\tu(x)-u(x)]}{t}.
    $$
    Since the level sets of $v$ reach the boundary $\partial\Omega,$ there exist $y,y_t\in\partial\Omega$ such that
    $$
        u(x)=v(x)-f(x)=v(y)-f(x)=u(y)+f(y)-f(x)=\tu(y)+f(y)-f(x)=f(y)-f(x),
    $$
    and
    \begin{align*}
        u(x+th)=v(x+th)-f(x+th)=v(y_t)-f(x+th)
        &=u(y_t)+f(y_t)-f(x+th)\nl
        &=\tu(y_t)+f(y_t)-f(x+th)\nl
        &=f(y_t)-f(x+th).
    \end{align*}
    Thus,
    \begin{align*}
        &[\tu(x+th)-u(x+th)]-[\tu(x)-u(x)]\nl
        &=[\tu(x+th)-\tu(y_t)]-[\tu(x)-\tu(y)]+[f(x+th)-f(x)]-[f(y_t)-f(y)].
    \end{align*}
    Consider $\gamma$ and $\gamma_t,$ curves passing through $x$ and $x+th,$ as in Definition 4.2 in \cite{MO} with $\gamma(0)=y$ and $\gamma_t(0)=y_t.$ Suppose $\gamma(s_0)=x$ and $\gamma_t(s_0)=x+th$ (where $\gamma_t$ is reparametrized if necessary). Then
    \begin{align*}
        &[\tu(x+th)-u(x+th)]-[\tu(x)-u(x)]\nl
        &=[\tu(x+th)-\tu(y_t)]-[\tu(x)-\tu(y)]+[f(x+th)-f(y_t)+f(x)-f(y)]\nl
        &=[\tu(\gamma_t(s_0))-\tu(\gamma_t(0))]-[\tu(\gamma(s_0))-\tu(\gamma(0))]+[f(\gamma_t(s_0))-f(\gamma_t(0))+f(\gamma(s_0))-f(\gamma(0))]\nl
        &=\int_0^{s_0}\nabla\tu(\gamma_t(s))\cdot\gamma_t'(s)\,ds-\int_0^{s_0}\nabla\tu(\gamma(s))\cdot\gamma'(s)\,ds\nl
        &\hspace{0.2in}+\int_0^{s_0}F(\gamma_t(s))\cdot\gamma_t'(s)\,ds-\int_0^{s_0}F(\gamma(s))\cdot\gamma'(s)\,ds\nl
        &=\int_0^{s_0}\nabla\tv(\gamma_t(s))\cdot\gamma_t'(s)\,ds-\int_0^{s_0}\nabla\tv(\gamma(s))\cdot\gamma'(s)\,ds\nl
        &\hspace{0.2in}+\int_0^{s_0}\big(F(\gamma_t(s))-\tF(\gamma_t(s))\big)\cdot\gamma_t'(s)\,ds-\int_0^{s_0}\big(F(\gamma(s))-\tF(\gamma(s))\big)\cdot\gamma'(s)\,ds.
    \end{align*}
    Thus,
    \begin{align*}
        &\CL(x,h)=\lim\limits_{t\to0}\recip{t}\bigg(\int_0^{s_0}\big(\nabla\tv(\gamma_t(s))+F(\gamma_t(s))-\tF(\gamma_t(s))\big)\cdot\gamma_t'(s)\,ds\nl
        &\hspace{1.5in}-\int_0^{s_0}\big(\nabla\tv(\gamma(s))+F(\gamma(s))-\tF(\gamma(s))\big)\cdot\gamma'(s)\,ds\bigg).
    \end{align*}
    On $\Gamma,$ we have $\frac{d}{ds}(v(\gamma(s)))
    =\nabla v(\gamma(s))\cdot\gamma'(s)
    =0
    =\frac{\sigma(\gamma(s))}{\tsigma(\gamma(s))}\nabla v(\gamma(s))\cdot\gamma'(s)
    =\frac{J(\gamma(s))}{\tsigma(\gamma(s))}\cdot\gamma'(s).$ Thus, $\frac{J(\gamma(s))}{\tsigma(\gamma(s))}\cdot\gamma'(s)=0.$ Similarly, on $\Gamma_t,$ we have
    $
        \frac{J(\gamma_t(s))}{\tsigma(\gamma_t(s))}\cdot\gamma_t'(s)=0.
    $
    Therefore,
    \begin{align*}
        \CL(x,h)
        &=\lim\limits_{t\to0}\recip{t}\Bigg(\int_0^{s_0}\Bigg(\frac{\tJ(\gamma_t(s))}{\tsigma(\gamma_t(s))}+F(\gamma_t(s))-\tF(\gamma_t(s))\Bigg)\cdot\gamma_t'(s)\,ds\nl
        &\hspace{1in}-\int_0^{s_0}\Bigg(\frac{\tJ(\gamma(s))}{\tsigma(\gamma(s))}+F(\gamma(s))-\tF(\gamma(s))\Bigg)\cdot\gamma'(s)\,ds\Bigg)\nl
        &=\lim\limits_{t\to0}\recip{t}\Bigg(\int_0^{s_0}\Bigg(\frac{\tJ(\gamma_t(s))-J(\gamma_t(s))}{\tsigma(\gamma_t(s))}+F(\gamma_t(s))-\tF(\gamma_t(s))\Bigg)\cdot\gamma_t'(s)\,ds\nl
        &\hspace{1in}-\int_0^{s_0}\Bigg(\frac{\tJ(\gamma(s))-J(\gamma(s))}{\tsigma(\gamma(s))}+F(\gamma(s))-\tF(\gamma(s))\Bigg)\cdot\gamma'(s)\,ds\Bigg)
    \end{align*}
    Define
    $$
        G(x):=\frac{\tJ(x)-J(x)}{\tsigma(x)}+F(x)-\tF(x),\,x\in\Omega.
    $$
    Then
    $$
        \CL(x,h)
        =\lim\limits_{t\to0}\,\recip{t}\left(\int_0^{s_0}G(\gamma_t(s))\cdot\gamma_t'(s)\,ds-\int_0^{s_0}G(\gamma(s))\cdot\gamma'(s)\,ds\right)=\lim\limits_{t\to0}\mathcal{G}(t)
    $$
    where
    \begin{equation}\label{CG2}
        \mathcal{G}(t):=\recip{t}\int_0^{s_0}\left(G(\gamma_t(s))-G(\gamma(s))\right)\cdot\gamma_t'(s)\,ds
        +\recip{t}\int_0^{s_0}G(\gamma(s))\cdot(\gamma_t'(s)-\gamma'(s))\,ds.
    \end{equation}
    By Definition 4.2 in \cite{MO}, there exists a positive constant $\tK$ such that $\left|\Frac{\gamma_t'(s)-\gamma'(s)}{t}\right|\leq \tK.$ Thus, we obtain the following bound for the second integral in \eqref{CG2}:
    \begin{equation}\label{estG2F}
        \left|\recip{t}\int_0^{s_0}G(\gamma(s))\cdot(\gamma_t'(s)-\gamma'(s))\,ds\right|\leq\int_0^L\frac{\tK}{\sigma_0}|\tJ(\gamma(s))-J(\gamma(s))|+\tK|F(\gamma(s))-\tF(\gamma(s))|\,ds.
    \end{equation}
    Next, by Definition 4.2 in \cite{MO},
    $$
        \lim\limits_{t\to0}\frac{\gamma_t(s)-\gamma(s)}{t}=B_{x,h}(s).
    $$
    If $G=(G_1,G_2),$ then for $i\in\{1,2\},$
    $$
        \lim\limits_{t\to0}\frac{G_i(\gamma_t(s))-G_i(\gamma(s))}{t}
        =\lim\limits_{t\to0}\frac{G_i(\gamma(s)+tB_{x,h}(s))-G_i(\gamma(s))}{t}
        =\nabla G_i(\gamma(s))\cdot B_{x,h}(s).
    $$
    Thus, we estimate the first integral in \eqref{CG2} by
    \begin{align}
        &\lim\limits_{t\to 0}\recip{t}\int_0^{s_0}\big(G(\gamma_t(s))-G(\gamma(s))\big)\cdot\gamma_t'(s)\,ds\nonumber\nl
        &=\int_0^{s_0}\big(\nabla G_1(\gamma(s))\cdot B_{x,h}(s), \nabla G_2(\gamma(s))\cdot B_{x,h}(s)\big)\cdot\gamma_t'(s)\,ds\nonumber\nl
        &\leq\|B\|_{L^\infty(\Omega)}\int_0^{s_0}\big(|\nabla G_1(\gamma(s)|+|\nabla G_2(\gamma(s)|\big)\,|\gamma'(s)|\,ds\nonumber\nl
        &\leq\|B\|_{L^\infty(\Omega)}\int_0^L\big(|\nabla G_1(\gamma(s)|+|\nabla G_2(\gamma(s)|\big)\,ds\label{estG1F}
    \end{align}
    By \eqref{estG2F} and \eqref{estG1F}, we conclude that 
    \begin{align*}
        |\nabla\tu(x)-\nabla u(x)|
        \leq\sup\limits_{h\in S^1}\CL(x,h)
        &\leq\int_0^L\frac{\tK}{\sigma_0}|\tJ(\gamma(s))-J(\gamma(s))|+\tK|F(\gamma(s))-\tF(\gamma(s))|\,ds\nl
        &\hspace{0.6in}+\|B\|_{L^\infty(\Omega)}\int_0^L\big(|\nabla G_1(\gamma(s))|+|\nabla G_2(\gamma(s))|\big)\,ds.
    \end{align*}
    Thus,
    $$
        \int_\Gamma|\nabla\tu-\nabla u|\,dl
        \leq K\tK\int_\Gamma\recip{\sigma_0}|J-\tJ|+|F-\tF|\,dl+K\|B\|_{L^\infty(\Omega)}\int_\Gamma\big(|\nabla G_1|+|\nabla G_2|\big)\,dl. 
    $$
    Therefore, for each $\tau\inR,$
    \begin{align*}
        \int_{\{v=\tau\}\cap\Omega}|\nabla\tu-\nabla u|\,dl
        &\leq K\tK\int_{\{v=\tau\}\cap\Omega}\recip{\sigma_0}|J-\tJ|+|F-\tF|\,dl\nl
        &\hspace{0.6in}+K\|B\|_{L^\infty(\Omega)}\int_{\{v=\tau\}\cap\Omega}\big(|\nabla G_1|+|\nabla G_2|\big)\,dl.
    \end{align*}
    and therefore,
    \begin{align}\label{K1}
    \begin{split}
        \int_{\{v=\tau\}\cap\Omega}|\nabla\tv-\nabla v|\,dl
        &\leq\int_{\{v=\tau\}\cap\Omega}|\nabla\tu-\nabla u|+|F-\tF|\,dl\nl
        &\leq\int_{\{v=\tau\}\cap\Omega}\frac{K\tK}{\sigma_0}|J-\tJ|+(K\tK+1)|F-\tF|\,dl\nl
        &\hspace{0.6in}+K\|B\|_{L^\infty(\Omega)}\int_{\{v=\tau\}\cap\Omega}\big(|\nabla G_1|+|\nabla G_2|\big)\,dl.
    \end{split}
    \end{align}
    By \eqref{K1} and the coarea formula,
    \begin{align*}
        \frac{m}{\sigma_1}\|\nabla v-\nabla\tv\|_{L^1(\Omega)}
        &\leq \int_{\Omega}|\nabla v|\,|\nabla v-\nabla\tv|\,dx\nl
        &=\int_\R\int_{\{v=\tau\}\cap\Omega} |\nabla v-\nabla\tv|\,dl\,d\tau\nl
        &\leq\int_\R\int_{\{v=\tau\}\cap\Omega}\frac{K\tK}{\sigma_0}|J-\tJ|+(K\tK+1)|F-\tF|\,dl\,d\tau\nl
        &\hspace{0.6in}+K\|B\|_{L^\infty(\Omega)}\int_\R\int_{\{v=\tau\}\cap\Omega}\big(|\nabla G_1|+|\nabla G_2|\big)\,dl\,d\tau\nl
        &\leq\frac{M}{\sigma_0}\int_\R\int_{\{v=\tau\}\cap\Omega}\frac{K\tK}{\sigma_0}\frac{|J-\tJ|}{|\nabla v|}+(K\tK+1)\frac{|F-\tF|}{|\nabla v|}\,dl\,d\tau\nl
        &\hspace{0.8in}+\frac{KM\|B\|_{L^\infty(\Omega)}}{\sigma_0}\int_\R\int_{\{v=\tau\}\cap\Omega}\frac{|\nabla G_1|+|\nabla G_2|}{|\nabla v|}\,dl\,d\tau\nl
        &=\frac{M}{\sigma_0}\int_\Omega\frac{K\tK}{\sigma_0}|J-\tJ|+(K\tK+1)|F-\tF|\,dx\nl
        &\hspace{0.8in}+\frac{KM\|B\|_{L^\infty(\Omega)}}{\sigma_0}\int_\Omega \big(|\nabla G_1|+|\nabla G_2|\big)\,dx.
    \end{align*}
    Lastly,
    $$
        \int_\Omega \big(|\nabla G_1|+|\nabla G_2|\big)\,dx\leq 2C_1\big(\|J-\tJ\|_{L^1(\Omega)}^{1/2}+\|F-\tF\|_{L^1(\Omega)}^{1/2}\big)
    $$
    where $C_1=C_1\left(\Omega,M,k_2,\sigma_0,\sigma_2\right)$ is the constant obtained from Lemma \ref{lemma F3}. By Theorem \ref{thm F1} and the triangle inequality, we obtain the desired result. \hfill$\Box$

\begin{theorem}\label{thm F5}
    Let $n=3$ and suppose that $u$ and $\tu$ are admissible with $u|_{\partial\Omega}=\tu|_{\partial\Omega}=0.$ Suppose $F,\tF\in C^2(\bar\Omega),\,H\in C^1(\bar\Omega),\,\sigma,\tsigma\in C^2(\bar\Omega)$ and satisfy \eqref{FHineq3} and \eqref{sigmaineq3}. If the level sets of $v$ can be foliated to one-dimensional curves in the sense of Definition 3.4 in \cite{MO}, and the level sets of $v$ are well-structured in the sense of Definition 4.2 in \cite{MO} then
\begin{equation}
    \|\nabla u-\nabla\tu\|_{L^1(\Omega)}\leq C\|\,F-\tF\|_{L^1(\Omega)}^{1/4},
\end{equation}
for some constant $C=C(\Omega,m,M,k_2,K,\tK,\sigma_0,\sigma_1,\sigma_2)$ independent of $\tu$ and $\tsigma.$
\end{theorem}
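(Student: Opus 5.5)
The plan is to run the two–dimensional argument of Theorem \ref{thm F4} inside each leaf of the foliation of Definition 3.4 in \cite{MO}, and then reassemble the leaves by a double application of the coarea formula, exactly as Theorem \ref{thm F3} upgrades Theorem \ref{thm F2}.

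\emph{Pointwise bound.} Fix $x\in\Omega$ and $h\in S^2$, and define $\CL(x,h)=(\nabla\tu(x)-\nabla u(x))\cdot h$ as a difference quotient. The level set of $v$ through $x$ reaches $\partial\Omega$, and it is foliated by the curves $\{v=t\}\cap\{g_t=r\}$. We take $\gamma$ to be the leaf curve through $x$ and $\gamma_\tau$ the curve through $x+\tau h$, both starting on $\partial\Omega$ and parametrized as in Definition 4.2 of \cite{MO}. Since $u=\tu=0$ on $\partial\Omega$ and $v$ is constant along these curves, the same computation as in Theorem \ref{thm F4} goes through verbatim. One uses $J\cdot\gamma'=0$ along level curves of $v$ and writes $\nabla\tv=\tJ/\tsigma$. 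This expresses
\[
\CL(x,h)=\lim_{\tau\to0}\frac1\tau\left(\int_0^{s_0}G(\gamma_\tau)\cdot\gamma_\tau'\,ds-\int_0^{s_0}G(\gamma)\cdot\gamma'\,ds\right),
\]
with $G$ as in \eqref{G2}, now with three components. Splitting as in \eqref{CG2} and using $|\gamma_\tau'-\gamma'|\le\tK\tau$ and $(\gamma_\tau-\gamma)/\tau\to B_{x,h}$, we get
\[
|\nabla\tu(x)-\nabla u(x)|\le\int_\gamma\Big(\frac{\tK}{\sigma_0}|J-\tJ|+\tK|F-\tF|+\|B\|_{L^\infty}\sum_{i=1}^3|\nabla G_i|\Big)dl.
\]
The right-hand side is constant along the curve. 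Integrating over that curve, whose length is at most $K$, bounds
\[
\int_{\{v=t\}\cap\{g_t=r\}}|\nabla\tv-\nabla v|\,dl
\]
by $K$ times the same line integral, plus the extra line integral of $|F-\tF|$.

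\emph{Reassembly.} Admissibility gives $|\nabla v|\ge m/\sigma_1$ and $|\nabla v|\le M/\sigma_0$. We then chain the following steps, as in \eqref{II1}--\eqref{II6}:
\begin{enumerate}[(i)]
\item Write $\frac{m}{\sigma_1}\|\nabla v-\nabla\tv\|_{L^1}\le\int_\R\int_{\{v=t\}}|\nabla v-\nabla\tv|\,dS\,dt$.
\item Apply the coarea formula in $g_t$ on each level surface, with $|\nabla g_t|\ge c_g$.
\item Insert the leafwise bound.
\item Undo both coarea formulas using $|\nabla g_t|\le C_g$ and $|\nabla v|\le M/\sigma_0$.
\end{enumerate}
This yields
\[
\|\nabla v-\nabla\tv\|_{L^1(\Omega)}\le C\Big(\|J-\tJ\|_{L^1}+\|F-\tF\|_{L^1}+\sum_{i=1}^3\|\nabla G_i\|_{L^1}\Big).
\]
Lemma \ref{lemma F3}, which is stated for $n=3$, bounds $\|\nabla G_i\|_{L^1}$ by $C(\|J-\tJ\|_{L^1}^{1/2}+\|F-\tF\|_{L^1}^{1/2})$. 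Theorem \ref{thm F1} gives $\|J-\tJ\|_{L^1}\le C\|F-\tF\|_{L^1}^{1/2}$, so each term is at most $C\|F-\tF\|_{L^1}^{1/4}$. The lower-order terms are absorbed using the boundedness $\|F-\tF\|_{L^1}\le 2k_2|\Omega|$, which gives $\|F-\tF\|\le C\|F-\tF\|^{1/4}$. Finally, $\nabla u-\nabla\tu=\nabla v-\nabla\tv-(F-\tF)$ and the triangle inequality finish the proof.

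\emph{Main obstacle.} The delicate step is the pointwise one: we must ensure that the curves $\gamma_\tau$ through $x+\tau h$ can be chosen inside the foliation with $|\gamma_\tau'-\gamma'|\le\tK\tau$ and a bounded first-order variation field $B_{x,h}$, for every direction $h\in S^2$ and not just tangential ones. For $h$ transverse to $\{v=v(x)\}$, the curve $\gamma_\tau$ lies on a different level set and a different leaf $g_t=r$. I would rely on Definition 4.2 of \cite{MO}, read as applying to the one-dimensional leaves in the $n=3$ setting, to supply exactly these properties. The limit interchange is then justified by the $C^2$ bounds \eqref{FHineq3} and \eqref{sigmaineq3}, which make $G\in C^1$, together with dominated convergence.
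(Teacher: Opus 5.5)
Your proposal is correct and follows essentially the same route as the paper: the leafwise analogue on $\{v=t\}\cap\{g_t=r\}$ of the pointwise estimate from Theorem \ref{thm F4}, then two applications of the coarea formula using $c_g\le|\nabla g_t|\le C_g$ and $m/\sigma_1\le|\nabla v|\le M/\sigma_0$, followed by Lemma \ref{lemma F3}, Theorem \ref{thm F1}, the bound $\|F-\tF\|_{L^1(\Omega)}\le 2k_2|\Omega|$, and the triangle inequality. The step you flag as the main obstacle is the existence of the variation curves and of $B_{x,h}$ for every direction $h$ via Definition 4.2 of \cite{MO}; the paper relies on exactly the same assumption implicitly when it says its argument is similar to that of Theorem \ref{thm F4}.
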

{\bf Proof.}
    Similar to the proof of Theorem \ref{thm F4}, we can conclude that
    \begin{align}\label{K2}
    \begin{split}
        \int_{V_{\tau,r}}|\nabla v-\nabla\tv|\,dl
        &\leq\int_{V_{\tau,r}}\frac{K\tK}{\sigma_0}|J-\tJ|+(K\tK+1)|F-\tF|\,dl\nl
        &\hspace{0.6in}+K\|B\|_{L^\infty(\Omega)}\int_{V_{\tau,r}}\big(|G_1|+|G_2|+|G_3|\big)\,dl.
    \end{split}
    \end{align}
    where $V_{\tau,r}:=\{v=\tau\}\cap\{g_\tau=r\}\cap\Omega$ (see Definiton 3.4 in \cite{MO}) and $G=(G_1,G_2,G_3)$ is defined in \eqref{G2}. It follows from \eqref{K2} and the coarea formula that
    \allowdisplaybreaks
    \begin{align*}
        \frac{m}{\sigma_1}\|\nabla v-\nabla\tv\|_{L^1(\Omega)}
        &\leq\frac{MC_g}{\sigma_0c_g}\int_\R\int_\R\int_{V_{\tau,r}}\frac{K\tK}{\sigma_0}\cdot\frac{|J-\tJ|}{|\nabla g_\tau|\,|\nabla v|}+(K\tK+1)\frac{|F-\tF|}{|\nabla g_\tau|\,|\nabla v|}\,dl\,dr\,d\tau\nl
        &\hspace{0.44in}+\frac{K\|B\|_{L^\infty(\Omega)}}{c_g}\cdot C_g\cdot\frac{M}{\sigma_0}\int_\R\int_\R\int_{V_{\tau,r}}\frac{|\nabla G_1|+|\nabla G_2|+|\nabla G_3|}{|\nabla g_\tau|\,|\nabla v|}\,dl\,dr\,d\tau\nl
        &=\frac{MC_g}{\sigma_0c_g}\int_\Omega\frac{K\tK}{\sigma_0}|J-\tJ|+(K\tK+1)|F-\tF|\,dx\nl
        &\hspace{0.44in}+\frac{KMC_g\|B\|_{L^\infty(\Omega)}}{\sigma_0c_g}\int_\Omega\big(|\nabla G_1|+|\nabla G_2|+|\nabla G_3|\big)\,dx.
    \end{align*}
    Lastly,
    $$
        \int_\Omega \big(|\nabla G_1|+|\nabla G_2|+|\nabla G_3|\big)\,dx\leq 3C_1\Big(\|J-\tJ\|_{L^1(\Omega)}^{1/2}+(2k_2|\Omega|)^{1/4}\|F-\tF\|_{L^1(\Omega)}^{1/4}\Big)
    $$
    where $C_1=C_1\left(\Omega,M,k_2,\sigma_0,\sigma_2\right)$ is the constant obtained from Lemma \ref{lemma F3}.
    By Theorem \ref{thm F1} and the triangle inequality, we obtain the desired result.
    \hfill$\Box$

\begin{theorem}\label{thm F6}
Let $n=2$ and suppose that $u$ and $\tu$ are admissible with $u|_{\partial\Omega}=\tu|_{\partial\Omega}=0.$ Suppose $F,\tF\in C^2(\bar\Omega),\,H\in C^1(\bar\Omega),\,\sigma,\tsigma\in C^2(\bar\Omega)$ and satisfy \eqref{FHineq3} and \eqref{sigmaineq3}. If the level sets of $v$ are well-structured in the sense of Definition 4.2 in \cite{MO}, then
\begin{equation}
    \|\sigma-\tsigma\|_{L^1(\Omega)}\leq C\|\,F-\tF\|_{L^1(\Omega)}^{1/4},
\end{equation}
for some constant $C=C(\Omega,m,M,k_2,K,\tK,\sigma_0,\sigma_1,\sigma_2)$ independent of $\tsigma.$
\end{theorem}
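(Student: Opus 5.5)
Since $J=\sigma\nabla v$ and $\tJ=\tsigma\nabla\tv$, with $v=u+f$ and $\tv=\tu+\tf$, admissibility gives pointwise $|J|\ge m$ and $|\tJ|\ge m$. It also gives
\[
\sigma=\frac{|J|}{|\nabla v|},\qquad \tsigma=\frac{|\tJ|}{|\nabla\tv|},\qquad |\nabla v|\ge \frac{m}{\sigma_1},\qquad |\nabla\tv|\ge \frac{m}{\sigma_1}.
\]
I will compare $\sigma$ and $\tsigma$ through this quotient representation. The plan follows the proof of Theorem 4.6 in \cite{LM2}, with $u,\tu$ replaced by $v,\tv$. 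Writing
\[
\sigma-\tsigma=\frac{|J|-|\tJ|}{|\nabla v|}+|\tJ|\,\frac{|\nabla\tv|-|\nabla v|}{|\nabla v|\,|\nabla\tv|},
\]
we get the pointwise bound
\[
|\sigma-\tsigma|\le \frac{\sigma_1}{m}|J-\tJ|+\frac{M\sigma_1^2}{m^2}\,|\nabla v-\nabla\tv| .
\]
Integrating over $\Omega$ yields
\[
\|\sigma-\tsigma\|_{L^1(\Omega)}\le C\big(\|J-\tJ\|_{L^1(\Omega)}+\|\nabla v-\nabla\tv\|_{L^1(\Omega)}\big),
\]
with $C=C(m,M,\sigma_1)$.

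The first term is handled by Theorem \ref{thm F1}, which gives $\|J-\tJ\|_{L^1}\le C\|F-\tF\|_{L^1}^{1/2}$. For the second, I use $\nabla v-\nabla\tv=(\nabla u-\nabla\tu)+(F-\tF)$. The proof of Theorem \ref{thm F4} in fact bounds $\|\nabla v-\nabla\tv\|_{L^1}$ directly, by $C(\|J-\tJ\|_{L^1}+\|F-\tF\|_{L^1}+\|\nabla G_i\|_{L^1})$. Lemma \ref{lemma F3} then controls $\|\nabla G_i\|_{L^1}$ by $\|J-\tJ\|^{1/2}+\|F-\tF\|^{1/2}$, which is of order $\|F-\tF\|_{L^1}^{1/4}$. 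So either I quote Theorem \ref{thm F4} and add $\|F-\tF\|_{L^1}$ via the triangle inequality, or I reuse that intermediate bound.

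The last step is converting all powers to the single exponent $1/4$. Since $F,\tF$ are bounded in $C^2$ by $k_2$, we have $\|F-\tF\|_{L^1}\le 2k_2|\Omega|$. Hence $\|F-\tF\|^{1/2}\le (2k_2|\Omega|)^{1/4}\|F-\tF\|^{1/4}$, and similarly $\|F-\tF\|\le(2k_2|\Omega|)^{3/4}\|F-\tF\|^{1/4}$. This produces a constant depending only on $\Omega,m,M,k_2,K,\tK,\sigma_0,\sigma_1,\sigma_2$, as claimed.

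The step I expect to need care is the pointwise lower bound on $|\nabla\tv|$. It requires that $\tu$ itself be admissible, so that $\tsigma\le\sigma_1$ and $|\tJ|\ge m$; this is part of the hypotheses. Beyond that I see no real obstacle: the remaining work is bookkeeping of constants, and the only nontrivial input is the gradient estimate already proved in Theorem \ref{thm F4}.
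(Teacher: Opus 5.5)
Your argument is correct and is essentially the paper's: a pointwise bound on $|\sigma-\tsigma|$ using $|\nabla v|,|\nabla\tv|\ge m/\sigma_1$, then Theorem \ref{thm F4}, the triangle inequality $|\nabla v-\nabla\tv|\le|\nabla u-\nabla\tu|+|F-\tF|$, and $\|F-\tF\|_{L^1(\Omega)}\le 2k_2|\Omega|$ to reduce all powers to $1/4$. The only difference is your extra term $(|J|-|\tJ|)/|\nabla v|$, which vanishes identically here because both problems share the weight $a$ and admissibility forces $|J|=|\tJ|=a$; the paper therefore writes $\sigma-\tsigma=a\,(|\nabla\tv|-|\nabla v|)/(|\nabla v|\,|\nabla\tv|)$ directly and does not need Theorem \ref{thm F1} at this step, though your version is harmless and would also cover a perturbed weight.
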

{\bf Proof.}
    \begin{align*}
        \Int_\Omega|\sigma-\tsigma|\,dx
        =\Int_\Omega a\Frac{\big||\nabla v|-|\nabla\tv|\big|}{|\nabla v||\nabla\tv|}\,dx
        \leq\Frac{M\sigma_1^2}{m^2}\Int_\Omega|\nabla u-\nabla\tu|+|F-\tF|\,dx
    \end{align*}
    and we apply Theorem \ref{thm F4} as well as $\|F-\tF\|_{L^1(\Omega)}^{3/4}\leq(2k_2|\Omega|)^{3/4}$.
\hfill$\Box$ \\

Lastly, the following theorem follows from Theorem \ref{thm F5} and a calculation similar to the proof of Theorem \ref{thm F6}.

\begin{theorem}\label{thm F7}
Let $n=3$ and suppose that $u$ and $\tu$ are admissible with $u|_{\partial\Omega}=\tu|_{\partial\Omega}=0.$ Suppose $F,\tF\in C^2(\bar\Omega),\,H\in C^1(\bar\Omega),\,\sigma,\tsigma\in C^2(\bar\Omega)$ and satisfy \eqref{FHineq3} and \eqref{sigmaineq3}. If the level sets of $v$ can be foliated to one-dimensional curves in the sense of Definition 3.4 in \cite{MO} and the level sets of $v$ are well-structured in the sense of Definition 4.2 in \cite{MO}, then
\begin{equation}
    \|\sigma-\tsigma\|_{L^1(\Omega)}\leq C\|\,F-\tF\|_{L^1(\Omega)}^{1/4},
\end{equation}
for some constant $C=C(\Omega,m,M,k_2,K,\tK,\sigma_0,\sigma_1,\sigma_2)$ independent of $\tsigma.$
\end{theorem}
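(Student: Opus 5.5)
The argument repeats the proof of Theorem \ref{thm F6}, with Theorem \ref{thm F5} used in place of Theorem \ref{thm F4}. The first step is to write $\sigma$ and $\tsigma$ in terms of $v$ and $\tv$. By admissibility, $J=\sigma\nabla v$ and $\tJ=\tsigma\nabla\tv$. Since $a$ is unperturbed in this section, $|J|=|\tJ|=a$ on $\Omega$; this uses $|\nabla v|,|\nabla\tv|\ge m/\sigma_1>0$ everywhere, so the identity holds pointwise rather than only $|Dv|$-a.e. Hence
\[
\sigma=\frac{a}{|\nabla v|},\qquad \tsigma=\frac{a}{|\nabla\tv|},\qquad
|\sigma-\tsigma|=a\,\frac{\big||\nabla v|-|\nabla\tv|\big|}{|\nabla v|\,|\nabla\tv|}.
\]
Using $a\le M$, the lower bound $|\nabla v|,|\nabla\tv|\ge m/\sigma_1$, the reverse triangle inequality, and $\nabla v-\nabla\tv=(\nabla u-\nabla\tu)+(F-\tF)$, this gives
\[
\|\sigma-\tsigma\|_{L^1(\Omega)}\le \frac{M\sigma_1^2}{m^2}\Big(\|\nabla u-\nabla\tu\|_{L^1(\Omega)}+\|F-\tF\|_{L^1(\Omega)}\Big).
\]

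The second step is to bound the gradient term. The hypotheses here (dimension $n=3$, admissibility, the regularity bounds \eqref{FHineq3} and \eqref{sigmaineq3}, the foliation of Definition 3.4 in \cite{MO}, and well-structured level sets) are exactly those of Theorem \ref{thm F5}. That theorem therefore gives $\|\nabla u-\nabla\tu\|_{L^1(\Omega)}\le C\|F-\tF\|_{L^1(\Omega)}^{1/4}$.

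The last step is to bring the linear term $\|F-\tF\|_{L^1(\Omega)}$ to the exponent $1/4$. Since $\|F\|_{C^2},\|\tF\|_{C^2}\le k_2$, we have $\|F-\tF\|_{L^1(\Omega)}\le 2k_2|\Omega|$. Therefore
\[
\|F-\tF\|_{L^1(\Omega)}=\|F-\tF\|_{L^1(\Omega)}^{3/4}\,\|F-\tF\|_{L^1(\Omega)}^{1/4}\le (2k_2|\Omega|)^{3/4}\,\|F-\tF\|_{L^1(\Omega)}^{1/4}.
\]
Combining the three steps yields the claim. The resulting constant depends on $m,M,\sigma_1,k_2,|\Omega|$ and on the constant from Theorem \ref{thm F5}. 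Strictly, that constant also depends on the foliation constants $c_g,C_g$; these should be understood as absorbed into the constant, as in Theorem \ref{thm F5}.

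The only point needing care is the identity $|J|=|\tJ|=a$ pointwise, which requires the nondegeneracy of $\nabla v$ and $\nabla\tv$ guaranteed by admissibility. Beyond that, the estimate reduces directly to Theorem \ref{thm F5}.
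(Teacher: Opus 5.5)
Your proposal is correct and is exactly the paper's argument: the bound $\|\sigma-\tsigma\|_{L^1(\Omega)}\le \frac{M\sigma_1^2}{m^2}\big(\|\nabla u-\nabla\tu\|_{L^1(\Omega)}+\|F-\tF\|_{L^1(\Omega)}\big)$ from the proof of Theorem \ref{thm F6}, then Theorem \ref{thm F5}, then $\|F-\tF\|_{L^1(\Omega)}^{3/4}\le(2k_2|\Omega|)^{3/4}$. One small correction is that $|J|=a$ holds Lebesgue-a.e. rather than literally pointwise, since $|\nabla v|\ge m/\sigma_1>0$ only makes $|\nabla v|$-a.e. equivalent to a.e.; this is all the $L^1$ estimate needs, and your remark that the constant implicitly depends on $c_g,C_g$ is accurate.
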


\section{Combined Stability with Respect to $a$, $F$, and $H$}
In this section, we combine the stability results established in the previous sections to obtain stability of minimizers of \eqref{mainmin0} under simultaneous perturbations of the parameters $a$, $F$, and $H$. More precisely, we derive quantitative estimates describing the dependence of minimizers on the combined variations of the weight function, the vector field, and the forcing term. These results provide a unified stability framework for the variational problem \eqref{mainmin0}.

\[
\begin{aligned}
I(w)&:=\Int_\Omega \left(a|Dw+F|+Hw\right),\\
I_1(w)&:=\Int_\Omega \left(a|Dw+F|+\tH w\right),\\
I_2(w)&:=\Int_\Omega \left(\ta|Dw+F|+\tH w\right),\\
\tilde I(w)&:=\Int_\Omega \left(\ta|Dw+\tF|+\tH w\right).
\end{aligned}
\]

Let $u$, $u_1$, $u_2$, and $\tu$ be minimizers of $I$, $I_1$, $I_2$, and $\tilde I$ over $BV_0(\Omega)$, respectively.

For $x\in\Omega,$ define
\[
\sigma(x):=\Frac{a(x)}{|Du(x)+F(x)|},\qquad
\sigma_1(x):=\Frac{a(x)}{|Du_1(x)+F(x)|},
\]
\[
\sigma_2(x):=\Frac{\ta(x)}{|Du_2(x)+F(x)|},\qquad
\tsigma(x):=\Frac{\ta(x)}{|D\tu(x)+\tF(x)|}.
\]

Note that
\[
|u-\tu|\leq |u-u_1|+|u_1-u_2|+|u_2-\tu|
\]
and
\[
|\sigma-\tsigma|
\leq
|\sigma-\sigma_1|
+
|\sigma_1-\sigma_2|
+
|\sigma_2-\tsigma|.
\]
Thus, we have introduced the intermediate minimizers so that the perturbations in $a$, $H$, and $F$ are applied one at a time. Consequently, the stability estimates established in the previous sections can be combined to obtain our main results.

\begin{theorem}
      Let $n=2.$ Then, under the assumptions of Theorem 2.11 in \cite{MO}, Theorem \ref{thm a2}, and Theorem \ref{thm F2}, the following estimate holds for some constant $C$ independent of $\tu$ and $\tsigma:$
    $$
        \|u-\tu\|_{L^1(\Omega)}\leq C\left(\|H-\tH\|_{L^\infty(\Omega)}^{\frac{1}{2}}+\|a-\ta\|_{L^\infty(\Omega)}^{\frac{1}{2}}+\max\left(\|f-\tf\|_{W^{1,1}(\Omega)}^{\frac{1}{2}},\,\|f-\tf\|_{W^{1,1}(\Omega)}\right)\right).
    $$
  \end{theorem}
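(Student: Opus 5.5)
The plan is to use the telescoping decomposition
\[
\|u-\tu\|_{L^1(\Omega)}\leq \|u-u_1\|_{L^1(\Omega)}+\|u_1-u_2\|_{L^1(\Omega)}+\|u_2-\tu\|_{L^1(\Omega)},
\]
already recorded above, and to estimate each term by the single-parameter result that matches the one parameter changed in that step. The passage $u\to u_1$ changes only $H$ to $\tH$, with $a$ and $F$ fixed, so Theorem 2.11 in \cite{MO} applies. The passage $u_1\to u_2$ changes only $a$ to $\ta$, with $F$ and $\tH$ fixed, so Theorem \ref{thm a2} applies. The passage $u_2\to\tu$ changes only $F$ to $\tF$, with $\ta$ and $\tH$ fixed, so Theorem \ref{thm F2} applies. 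Adding the three bounds and taking $C$ to be the sum (or the maximum) of the three constants gives the displayed estimate.

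In order, the steps are as follows.
\begin{enumerate}[(i)]
\item Apply Theorem 2.11 of \cite{MO} to the pair $(u,u_1)$. This requires $u,u_1$ to be admissible with zero trace, and $v=u+f$ to satisfy \eqref{crvbnd}. It gives $\|u-u_1\|_{L^1}\leq C_1\|H-\tH\|_{L^\infty}^{1/2}$.
\item Apply Theorem \ref{thm a2} to the pair $(u_1,u_2)$, with $\tH$ playing the role of $H$. The relevant level-set function is now $v_1=u_1+f$, and it must satisfy \eqref{crvbnd}. Lemma \ref{lem:a priori bound} also needs $\|\tH\|_{L^\infty}<m/C_\Omega$, which is part of the assumptions of Theorem \ref{thm a2} as applied here. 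This gives $\|u_1-u_2\|_{L^1}\leq C_2\|a-\ta\|_{L^\infty}^{1/2}$.
\item Apply Theorem \ref{thm F2} to the pair $(u_2,\tu)$, with weight $\ta$ and forcing $\tH$. The bounds \eqref{mbounds} hold for $\ta$ as well, so $m,M$ can be kept. Here $v_2=u_2+f$ must satisfy \eqref{crvbnd}. This gives $\|u_2-\tu\|_{L^1}\leq C_3\max(\|f-\tf\|_{W^{1,1}}^{1/2},\|f-\tf\|_{W^{1,1}})$.
\end{enumerate}

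The main obstacle is bookkeeping rather than a new estimate. Each cited theorem places its admissibility and level-set hypothesis (\eqref{crvbnd}, with constant $K$) on the \emph{first} minimizer of its pair, so the phrase "under the assumptions of" must be read as imposing them on the intermediate minimizers $u_1,u_2$ and their level-set functions $u_1+f$, $u_2+f$. Once this is made explicit, the constants $C_1,C_2,C_3$ depend only on $\Omega,m,M,k_1,K,\sigma_0,\sigma_1$. In particular they do not depend on $\tu$ or $\tsigma$, because none of the three theorems has constants depending on its second minimizer. That yields the stated independence of $C$.
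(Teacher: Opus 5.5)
Your proposal is correct and matches the paper's argument: the paper proves this theorem only through the telescoping inequality via the intermediate minimizers $u_1,u_2$ of $I_1,I_2$, applying the $H$-, $a$-, and $F$-stability results one at a time in the same order you use. Your explicit bookkeeping is a useful clarification: admissibility, \eqref{crvbnd}, and the bound $\|\tH\|_{L^\infty(\Omega)}<m/C_\Omega$ must be imposed on $u_1$, $u_2$ and on $u_1+f$, $u_2+f$, which the paper leaves implicit in the phrase ``under the assumptions of''.
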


  \begin{theorem}
      Let $n=3.$ Then, under the assumptions of Theorem 2.14 in \cite{MO}, Theorem \ref{thm a3}, and Theorem \ref{thm F3}, the following estimate holds for some constant $C$ independent of $\tu$ and $\tsigma:$
    $$
        \|u-\tu\|_{L^1(\Omega)}\leq C\left(\|H-\tH\|_{L^\infty(\Omega)}^{\frac{1}{2}}+\|a-\ta\|_{L^\infty(\Omega)}^{\frac{1}{2}}+\max\left(\|f-\tf\|_{W^{1,1}(\Omega)}^{\frac{1}{2}},\,\|f-\tf\|_{W^{1,1}(\Omega)}\right)\right).
    $$
  \end{theorem}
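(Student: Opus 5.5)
The estimate follows from the telescoping decomposition introduced just before the theorem, with one perturbation applied at a time, in dimension $n=3$. First we write
\[
\|u-\tu\|_{L^1(\Omega)}\leq \|u-u_1\|_{L^1(\Omega)}+\|u_1-u_2\|_{L^1(\Omega)}+\|u_2-\tu\|_{L^1(\Omega)}.
\]
Each term is then controlled by one earlier result.

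\textbf{Term $u-u_1$.} The functionals $I$ and $I_1$ share $a$ and $F$ and differ only in $H$ versus $\tH$. We apply Theorem 2.14 of \cite{MO} (the $n=3$ $L^1$ estimate with respect to $H$) to the pair $(u,u_1)$. Its hypotheses are part of the assumptions: admissibility, the foliation of the level sets of $v=u+f$ in the sense of Definition 3.4 of \cite{MO}, and the constants $c_g,C_g,K$. This gives
\[
\|u-u_1\|_{L^1(\Omega)}\leq C\|H-\tH\|_{L^\infty(\Omega)}^{1/2}.
\]

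\textbf{Term $u_1-u_2$.} The functionals $I_1$ and $I_2$ share $F$ and $\tH$ and differ only in $a$ versus $\ta$. Theorem \ref{thm a3} applies with $H$ replaced by $\tH$, giving
\[
\|u_1-u_2\|_{L^1(\Omega)}\leq C\|a-\ta\|_{L^\infty(\Omega)}^{1/2}.
\]
Here we need $\tH$ to satisfy the smallness condition \eqref{Hbound}, so that Lemma \ref{lem:a priori bound} holds. We also need the foliation hypothesis for the level sets of $u_1+f$, the function that plays the role of $v$ in Theorem \ref{thm a3}. We read "under the assumptions of Theorem \ref{thm a3}" as imposing exactly these conditions on the intermediate minimizers.

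\textbf{Term $u_2-\tu$.} The functionals $I_2$ and $\tilde I$ share $\ta$ and $\tH$ and differ only in $F$ versus $\tF$. Theorem \ref{thm F3}, with weight $\ta$ and forcing $\tH$, gives
\[
\|u_2-\tu\|_{L^1(\Omega)}\leq C\max\left(\|f-\tf\|_{W^{1,1}(\Omega)}^{1/2},\|f-\tf\|_{W^{1,1}(\Omega)}\right).
\]
The relevant $v$ is now $u_2+f$. The constant depends on $m$ and $M$ only through the bounds \eqref{mbounds}, and these hold equally for $\ta$.

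\textbf{Conclusion and main obstacle.} Summing the three bounds and taking $C$ to be the maximum of the three constants gives the claim; all three constants are independent of $\tu$ and $\tsigma$. The main obstacle is bookkeeping rather than analysis. Each invoked theorem needs admissibility, with the same $\sigma_0,\sigma_1,m,M$, and needs the foliation and uniform level-set bounds for the unperturbed minimizer of its own pair, namely $u$, $u_1$, and $u_2$ respectively. We must check that the hypotheses of the three cited theorems, as stated in the theorem, are imposed on these intermediate minimizers. We must also check that the constants stay uniform: $k_1$ must bound $\|F\|_{L^1(\Omega)}$, and $\|\tH\|_{L^\infty(\Omega)}<m/C_\Omega$ must hold. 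Once this is verified, the proof is the triangle inequality.
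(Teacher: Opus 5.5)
Your proposal is correct and matches the paper's argument. The paper obtains this estimate from exactly the telescoping $u\to u_1\to u_2\to\tu$ through $I$, $I_1$, $I_2$, $\tilde I$, perturbing $H$, then $a$, then $F$ one at a time, and invoking Theorem 2.14 of \cite{MO}, Theorem \ref{thm a3}, and Theorem \ref{thm F3} in turn. Your explicit bookkeeping (admissibility, the foliation hypothesis for $u_1+f$ and $u_2+f$, and the smallness of $\tH$ imposed on the intermediate minimizers) is precisely what the paper leaves implicit in the phrase ``under the assumptions of''.
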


  \begin{theorem}
      Let $n=2.$ Then, under the assumptions of Theorem 2.18 in \cite{MO}, Theorem \ref{thm a4}, and Theorem \ref{thm F4}, the following estimate holds for some constant $C$ independent of $\tu$ and $\tsigma:$
    $$
        \|\nabla u-\nabla\tu\|_{L^1(\Omega)}\leq C\left(\|H-\tH\|_{L^\infty(\Omega)}^{\frac{1}{4}}+\|a-\ta\|_{L^\infty(\Omega)}^{\frac{1}{4}}+\|F-\tF\|_{L^1(\Omega)}^{\frac{1}{4}}\right).
    $$
  \end{theorem}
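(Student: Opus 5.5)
We will use the intermediate minimizers $u_1,u_2$ of $I_1,I_2$ introduced above, so that exactly one parameter changes at each step. By the triangle inequality,
\[
\|\nabla u-\nabla\tu\|_{L^1(\Omega)}\leq \|\nabla u-\nabla u_1\|_{L^1(\Omega)}+\|\nabla u_1-\nabla u_2\|_{L^1(\Omega)}+\|\nabla u_2-\nabla\tu\|_{L^1(\Omega)}.
\]
It then suffices to bound each term by the corresponding single-parameter estimate.

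\textbf{Step 1: change of $H$.} The pair $(u,u_1)$ has the same $a$ and $F$, and only $H$ is replaced by $\tH$. Theorem 2.18 in \cite{MO} then gives
\[
\|\nabla u-\nabla u_1\|_{L^1(\Omega)}\leq C\|H-\tH\|_{L^\infty(\Omega)}^{1/4}.
\]

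\textbf{Step 2: change of $a$.} The pair $(u_1,u_2)$ has the same $F$ and the same forcing term $\tH$, and only $a$ is replaced by $\ta$. Theorem \ref{thm a4}, applied with $H$ replaced by $\tH$, gives
\[
\|\nabla u_1-\nabla u_2\|_{L^1(\Omega)}\leq C\|a-\ta\|_{L^\infty(\Omega)}^{1/4}.
\]

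\textbf{Step 3: change of $F$.} The pair $(u_2,\tu)$ has the same weight $\ta$ and forcing term $\tH$, and only $F$ is replaced by $\tF$. Theorem \ref{thm F4}, applied with $a$ replaced by $\ta$ and $H$ by $\tH$, gives
\[
\|\nabla u_2-\nabla\tu\|_{L^1(\Omega)}\leq C\|F-\tF\|_{L^1(\Omega)}^{1/4}.
\]
Theorem \ref{thm F4} holds verbatim for any weight satisfying \eqref{mbounds}. Summing the three bounds and taking the largest of the three constants yields the stated estimate.

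\textbf{Main obstacle: verifying the hypotheses.} The delicate point is that each single-parameter theorem places its hypotheses on specific pairs of minimizers, and these must hold for the intermediate ones. We interpret the phrase ``under the assumptions of Theorem 2.18 in \cite{MO}, Theorem \ref{thm a4}, and Theorem \ref{thm F4}'' as follows:
\begin{itemize}
\item the pairs $(u,u_1)$, $(u_1,u_2)$ and $(u_2,\tu)$ satisfy the respective hypotheses;
\item in particular, $u_1$ and $u_2$ are admissible with associated $\sigma_1$ and $\sigma_2$ obeying the bounds \eqref{sigmaineq} and \eqref{sigmaineq3};
\item the reference functions $v=u+f$, $u_1+f$ and $u_2+f$ satisfy \eqref{crvbnd} and have well-structured level sets in the sense of Definition 4.2 in \cite{MO}.
\end{itemize}
The well-structuredness is needed because each theorem requires it only for the first element of its pair. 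Likewise, $\tH$ must satisfy \eqref{Hbound} and \eqref{FHineq}, so that Lemma \ref{lem:a priori bound} and Lemma \ref{lemma a1} apply with $\tH$ in Step 2.

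With these conventions, the constant $C$ depends only on the structural constants $\Omega,m,M,k_1,k_2,K,\tK,\sigma_0,\sigma_1,\sigma_2$. It does not depend on $\tu$ or $\tsigma$. I expect no further computation to be needed beyond checking that each invoked constant is uniform in this way.
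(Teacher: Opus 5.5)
Your proposal is correct and is essentially the paper's argument. The paper introduces the intermediate minimizers $u_1,u_2$ of $I_1,I_2$ precisely so that $H$, $a$, and $F$ are perturbed one at a time, and it obtains the estimate from the triangle inequality together with Theorem 2.18 in \cite{MO}, Theorem \ref{thm a4}, and Theorem \ref{thm F4}. Your explicit list of hypotheses for the intermediate pairs (admissibility of $u_1,u_2$, the level-set conditions on $u_1+f$ and $u_2+f$, and the bounds on $\tH$) spells out what the paper leaves implicit in the phrase ``under the assumptions of''.
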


  \begin{theorem}
      Let $n=3.$ Then, under the assumptions of Theorem 2.19 in \cite{MO}, Theorem \ref{thm a5}, and Theorem \ref{thm F5}, the following estimate holds for some constant $C$ independent of $\tu$ and $\tsigma:$
    $$
        \|\nabla u-\nabla\tu\|_{L^1(\Omega)}\leq C\left(\|H-\tH\|_{L^\infty(\Omega)}^{\frac{1}{4}}+\|a-\ta\|_{L^\infty(\Omega)}^{\frac{1}{4}}+\|F-\tF\|_{L^1(\Omega)}^{\frac{1}{4}}\right).
    $$
  \end{theorem}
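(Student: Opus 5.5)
The plan is to telescope through the intermediate minimizers $u_1$ and $u_2$ introduced above, so that $a$, $F$ and $H$ change one at a time. The triangle inequality gives
\[
\|\nabla u-\nabla\tu\|_{L^1(\Omega)}\leq \|\nabla u-\nabla u_1\|_{L^1(\Omega)}+\|\nabla u_1-\nabla u_2\|_{L^1(\Omega)}+\|\nabla u_2-\nabla\tu\|_{L^1(\Omega)}.
\]
Each of the three terms will be controlled by a single-parameter $W^{1,1}$ estimate in dimension $n=3$.

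\emph{First term.} The functionals $I$ and $I_1$ differ only in the forcing term ($H$ versus $\tH$), with the same $a$ and $F$. Applying Theorem 2.19 of \cite{MO} to the pair $(u,u_1)$ gives
\[
\|\nabla u-\nabla u_1\|_{L^1(\Omega)}\leq C\|H-\tH\|_{L^\infty(\Omega)}^{1/4}.
\]

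\emph{Second term.} $I_1$ and $I_2$ differ only in the weight ($a$ versus $\ta$), with $F$ and $\tH$ fixed. Theorem \ref{thm a5}, applied with $\tH$ in place of $H$, gives
\[
\|\nabla u_1-\nabla u_2\|_{L^1(\Omega)}\leq C\|a-\ta\|_{L^\infty(\Omega)}^{1/4}.
\]

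\emph{Third term.} $I_2$ and $\tilde I$ differ only in the drift ($F$ versus $\tF$), with weight $\ta$ and forcing $\tH$. Theorem \ref{thm F5}, applied with $\ta$ in place of $a$ and $\tH$ in place of $H$, gives
\[
\|\nabla u_2-\nabla\tu\|_{L^1(\Omega)}\leq C\|F-\tF\|_{L^1(\Omega)}^{1/4}.
\]
Summing the three bounds and taking the largest constant yields the stated estimate.

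The main obstacle is checking that the hypotheses of each single-parameter theorem hold for the relevant pair. Each theorem has a designated reference minimizer whose $v$ must satisfy the geometric assumptions: the bound \eqref{crvbnd}, the foliation of Definition 3.4 in \cite{MO}, and well-structuredness in Definition 4.2 of \cite{MO}. In the first step this is $v=u+f$. In the second it is $u_1+f$, and in the third it is $u_2+f$.

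I will interpret the phrase "under the assumptions of Theorem 2.19 in \cite{MO}, Theorem \ref{thm a5}, and Theorem \ref{thm F5}" as imposing all of the following:
\begin{itemize}
\item admissibility of $u$, $u_1$, $u_2$, $\tu$, with uniform constants $\sigma_0,\sigma_1,\sigma_2$;
\item the $C^2$ bounds \eqref{FHineq3} and \eqref{sigmaineq3} for all four coefficients $\sigma,\sigma_1,\sigma_2,\tsigma$;
\item the level-set structure for the reference functions $u+f$, $u_1+f$, $u_2+f$.
\end{itemize}
Two further checks are needed. The bound $\|\tH\|_{C^1}\leq k_2$, together with the smallness condition \eqref{Hbound} for $\tH$, makes Lemma \ref{lem:a priori bound} available in the second and third steps. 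The bounds $m\leq\ta\leq M$ let Theorem \ref{thm F5} run with weight $\ta$. Both are part of the standing assumptions.

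Once these hypotheses are fixed, the constants $C$ depend only on $\Omega,m,M,k_1,k_2,K,\tK$, the $\sigma$-bounds and $c_g,C_g$. None of them depends on $\tu$ or $\tsigma$, which gives the stated independence.
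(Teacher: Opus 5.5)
Your proof is correct and is the paper's own argument. The paper introduces the same intermediate minimizers $u_1,u_2$ of $I_1,I_2$ so that $H$, $a$, $F$ are perturbed one at a time, then chains Theorem 2.19 of \cite{MO}, Theorem \ref{thm a5} and Theorem \ref{thm F5} by the triangle inequality. Your explicit bookkeeping of which reference function ($u+f$, $u_1+f$, $u_2+f$) must carry the geometric hypotheses is more careful than the paper, which leaves this implicit; one small correction is that Lemma \ref{lem:a priori bound} is needed only in the $a$-step, since the $F$-step lemmas do not use it.
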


  \begin{theorem}
      Let $n=2.$ Then, under the assumptions of Theorem 2.20 in \cite{MO}, Theorem \ref{thm a6}, and Theorem \ref{thm F6}, the following estimate holds for some constant $C$ independent of $\tsigma:$
    $$
        \|\sigma-\tsigma\|_{L^1(\Omega)}\leq C\left(\|H-\tH\|_{L^\infty(\Omega)}^{\frac{1}{4}}+\|a-\ta\|_{L^\infty(\Omega)}^{\frac{1}{4}}+\|F-\tF\|_{L^1(\Omega)}^{\frac{1}{4}}\right).
    $$
  \end{theorem}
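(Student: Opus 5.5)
We decompose the total perturbation into three single-parameter perturbations, using the intermediate minimizers $u_1$ and $u_2$ of $I_1$ and $I_2$ introduced above, together with their coefficients $\sigma_1$ and $\sigma_2$. The triangle inequality
\[
\|\sigma-\tsigma\|_{L^1(\Omega)}\leq \|\sigma-\sigma_1\|_{L^1(\Omega)}+\|\sigma_1-\sigma_2\|_{L^1(\Omega)}+\|\sigma_2-\tsigma\|_{L^1(\Omega)}
\]
reduces the claim to three estimates, each involving a change in only one of $H$, $a$, $F$.

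\emph{Step 1: the $H$-perturbation.} The pair $(u,u_1)$ shares $a$ and $F$ and differs only in $H$ versus $\tH$. The assumptions of Theorem 2.20 in \cite{MO} apply directly, so
\[
\|\sigma-\sigma_1\|_{L^1(\Omega)}\leq C\|H-\tH\|_{L^\infty(\Omega)}^{1/4}.
\]

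\emph{Step 2: the $a$-perturbation.} The pair $(u_1,u_2)$ shares $F$ and $\tH$ and differs only in $a$ versus $\ta$. Theorem \ref{thm a6}, applied with $H$ replaced by $\tH$, gives
\[
\|\sigma_1-\sigma_2\|_{L^1(\Omega)}\leq C\|a-\ta\|_{L^\infty(\Omega)}^{1/4}.
\]

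\emph{Step 3: the $F$-perturbation.} The pair $(u_2,\tu)$ shares $\ta$ and $\tH$ and differs only in $F$ versus $\tF$. Theorem \ref{thm F6}, applied with $a$ replaced by $\ta$ and $H$ by $\tH$, gives
\[
\|\sigma_2-\tsigma\|_{L^1(\Omega)}\leq C\|F-\tF\|_{L^1(\Omega)}^{1/4}.
\]
Summing the three bounds and taking the maximum of the constants gives the stated estimate.

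The main obstacle is checking that the hypotheses "under the assumptions of" the three theorems actually hold for the intermediate pairs. This requires the following.
\begin{enumerate}[(i)]
\item $u_1$ and $u_2$ must be admissible, with coefficients bounded between the constants $\sigma_0$ and $\sigma_1$ and with $C^2$ bounds $\sigma_2$. Note that the paper's notation $\sigma_1,\sigma_2$ for the intermediate coefficients clashes with these constants, so distinct letters should be used.
\item The data must satisfy the uniform bounds \eqref{FHineq} and \eqref{FHineq3} with $\tH$ in place of $H$.
\item The reference function in each comparison must satisfy the level-set hypotheses. These are \eqref{crvbnd} and well-structuredness in the sense of Definition 4.2 in \cite{MO}, applied to $v=u+f$ in Step 1, to $u_1+f$ in Step 2, and to $u_2+f$ in Step 3.
\end{enumerate}

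The cleanest route is to read the hypothesis of the theorem as requiring exactly these conditions for each consecutive pair, and to state this explicitly. Alternatively, one can reorder the chain so that the well-structured function is always the reference minimizer. This is possible because the $F$-stability proof of Theorem \ref{thm F4} only uses the level sets of the first function $v$. Once this bookkeeping is fixed, the constants depend only on the listed structural quantities, and in particular are independent of $\tsigma$.
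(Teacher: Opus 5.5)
Your proposal is correct and follows the paper's argument: insert the intermediate minimizers $u_1,u_2$ of $I_1,I_2$, split $\|\sigma-\tsigma\|_{L^1(\Omega)}$ by the triangle inequality, and apply Theorem 2.20 of \cite{MO}, Theorem~\ref{thm a6} (with $\tH$ in place of $H$) and Theorem~\ref{thm F6} (with $\ta,\tH$ in place of $a,H$) to the consecutive pairs. Your accounting of which hypotheses must hold for the intermediate pairs is exactly what the paper's phrase ``under the assumptions of'' leaves implicit. One caveat on your alternative: no reordering of a three-step chain avoids imposing the level-set hypotheses on at least one intermediate minimizer, because the middle step always compares two intermediates.
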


  \begin{theorem}
      Let $n=3.$ Then, under the assumptions of Theorem 2.21 in \cite{MO}, Theorem \ref{thm a7}, and Theorem \ref{thm F7}, the following estimate holds for some constant $C$ independent of $\tsigma:$
    $$
        \|\sigma-\tsigma\|_{L^1(\Omega)}\leq C\left(\|H-\tH\|_{L^\infty(\Omega)}^{\frac{1}{4}}+\|a-\ta\|_{L^\infty(\Omega)}^{\frac{1}{4}}+\|F-\tF\|_{L^1(\Omega)}^{\frac{1}{4}}\right).
    $$
  \end{theorem}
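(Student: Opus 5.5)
The plan is to use the telescoping decomposition set up at the beginning of this section,
\[
|\sigma-\tsigma|\leq|\sigma-\sigma_1|+|\sigma_1-\sigma_2|+|\sigma_2-\tsigma|,
\]
and integrate over $\Omega$. Each term then involves a single perturbation, and each is controlled by one of the three $n=3$ results invoked in the hypothesis.

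\emph{The $H$-perturbation.} The pair $(u,u_1)$ consists of minimizers of $I$ and $I_1$. These share $a$ and $F$ and differ only in $H$ versus $\tH$. Under the assumptions of Theorem 2.21 in \cite{MO} (the $n=3$ $\sigma$-stability result for $H$-perturbations), we get
\[
\|\sigma-\sigma_1\|_{L^1(\Omega)}\leq C_1\|H-\tH\|_{L^\infty(\Omega)}^{1/4}.
\]

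\emph{The $a$-perturbation.} The pair $(u_1,u_2)$ consists of minimizers of $I_1$ and $I_2$. These share $F$ and $\tH$ and differ only in $a$ versus $\ta$. Theorem \ref{thm a7} is stated for a fixed $H$, and here we apply it with $H$ replaced by $\tH$. This gives
\[
\|\sigma_1-\sigma_2\|_{L^1(\Omega)}\leq C_2\|a-\ta\|_{L^\infty(\Omega)}^{1/4}.
\]

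\emph{The $F$-perturbation.} The pair $(u_2,\tu)$ consists of minimizers of $I_2$ and $\tilde I$. These share $\ta$ and $\tH$ and differ only in $F$ versus $\tF$. Theorem \ref{thm F7}, applied with weight $\ta$ and forcing $\tH$, gives
\[
\|\sigma_2-\tsigma\|_{L^1(\Omega)}\leq C_3\|F-\tF\|_{L^1(\Omega)}^{1/4}.
\]

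Summing the three bounds with $C=C_1+C_2+C_3$ yields the claimed estimate.

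The main point to check is the interpretation of the hypothesis. "Under the assumptions of" the three theorems has to mean that each intermediate pair satisfies the hypotheses of the corresponding theorem, with the parameters relabeled accordingly. Concretely, this requires the following for each pair:
\begin{itemize}
\item both minimizers in the pair are admissible;
\item the relevant $v$-function has level sets that are foliated into one-dimensional curves in the sense of Definition 3.4 in \cite{MO}, and well-structured in the sense of Definition 4.2 in \cite{MO};
\item the regularity bounds \eqref{FHineq}, \eqref{sigmaineq}, \eqref{FHineq3} and \eqref{sigmaineq3} hold.
\end{itemize}
The $v$-functions in question are $u+f$ for the first two pairs and $u_2+f$ for the third. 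One must also check that the constants $m$, $M$, $\sigma_0$, $\sigma_1$, $\sigma_2$, $k_1$, $k_2$, $K$, $\tK$, $c_g$, $C_g$ can be chosen uniformly across the three pairs.

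A secondary point is that Theorem \ref{thm a7} carries a smallness condition on $H$, namely \eqref{Hbound}, through Lemma \ref{lem:a priori bound}. This condition must hold for $\tH$, which is part of the assumed hypotheses.

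With these identifications made explicit, the constant $C$ depends only on the listed structural constants. In particular it is independent of $\tsigma$, as claimed.
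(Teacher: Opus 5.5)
Your proof is correct and is the same as the paper's: introduce the intermediate minimizers $u_1,u_2$ of $I_1,I_2$ so that only one parameter changes at a time, apply the $n=3$ $\sigma$-stability results for $H$, $a$, and $F$ to the three consecutive pairs, and sum the three bounds. One small bookkeeping slip: for the middle pair $(u_1,u_2)$, the level-set hypotheses of Theorem \ref{thm a7} concern $u_1+f$, not $u+f$.
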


  \subsection{Numerical Simulation}

It was proven in \cite{CMO} that the following algorithm converges weakly to a minimizer, $u,$ of \eqref{mainmin0}.

\vspace{0.25in}
\textbf{Algorithm 1}

\vspace{0.3in}

Let $\lambda>0.$ Let $H\in L^\infty(\Omega)$ and initialize $b^0,d^0\in\left(L^2(\Omega)\right)^n.$ For $k\geq0,$
\begin{enumerate}
    \item Solve
    \begin{equation*}\label{Poisson}
        \Delta u^{k+1}=-\nabla\cdot\left(b^k-d^k\right)+\recip{\lambda}H,\enspace u^{k+1}\big|_{\partial\Omega}=0.
    \end{equation*}
    \item Compute
    $$
        d^{k+1}:=
        \begin{cases}
            \max\left\{\left|b^k+\nabla u^{k+1}+F\right|-\frac{a}{\lambda},0\right\}\frac{b^k+\nabla u^{k+1}+F}{\left|b^k+\nabla u^{k+1}+F\right|}-F &\text{if }\left|b^k+\nabla u^{k+1}+F\right|\neq0\\
            -F &\text{if }\left|b^k+\nabla u^{k+1}+F\right|=0
        \end{cases}.
    $$
    \item Let
    $$
        b^{k+1}:=b^k+\nabla u^{k+1}-d^{k+1}.
    $$
\end{enumerate}

In \cite{CMO}, the authors and Weitao Chen wrote MATLAB code to run numerical simulations demonstrating the convergence of Algorithm 1. We use a slightly modified version of the code to run the following simulation. We construct an example with a known minimizer and compare the approximate solution with the known exact solution. The following data gives the numerical errors of numerical simulations with Algorithm 1 (with $\lambda =1$) for mesh size $h=1/100.$ The iterations are stopped when 
\[|u^{k+1}-u^k|/|u^{k+1}|<1\times10^{-7}.\]

We examine the effect of noise in our simulation. The noise model we used is a simple stochastic model $\tH = H + \gamma * R,$ where $R$ is normally distributed pseudo-random matrix of the order as $H$ with mean zero and standard deviation of one, and $\gamma > 0$ is the model standard deviation chosen as $\gamma = \delta * ||H||/||R||,$ where $\delta$
is the noise level. Similarly, we add noise to $a$ and $F$ in order to obtain $\ta$ and $\tF.$ Replacing $H$ with $\tH,$ $a$ with $\ta,$ and $F$ with $\tF$ in Algorithm 1, we obtain the following approximations of $\tu$ and compare them with $u.$

\begin{example}
    Let $\Omega=(0,1)\times(0,1)\subset\R^2$ and $u:=xy(1-x)(1-y)$ so that $u|_{\partial\Omega}=0.$ Let $F:=-\nabla u+(1,x+y)$ so that $\nabla u+F=(1,x+y)$ and let $a:=|\nabla u+F|=\sqrt{1+(x+y)^2}.$ Let $H:=\nabla\cdot(1,x+y)=1$ so that \eqref{EL} holds. 
\end{example}

\begin{table}[ht]
\centering
    
    \label{tab:exampleH1}
\begin{tabular}{ | m{3.8cm}| m{5cm} | m{3.85cm} |} 
  \hline
  \vspace{0.1cm}
  Low Noise $(\delta =0.01)$ & Moderate Noise $(\delta=0.035)$ & High Noise $(\delta =0.06)$ \\ 
  \hline
  \vspace{0.1cm}
  $0.0260$ & $0.0978$ & $0.1718$\\ 
  \hline
  \end{tabular}
  \caption{Relative $L^2$ errors for Algorithm 1 with $h=1/100$ and $Tol=1\times10^{-7}$ and increasing noise levels.}\vspace{0.1in}
  \end{table}

\begin{center}
    \includegraphics[width=4.2in]{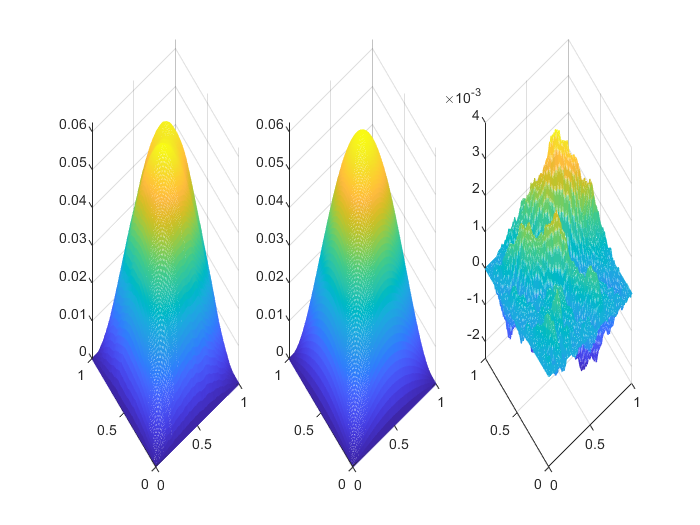}
        \begin{minipage}{15cm}

        \small  Figure 1: Numerical approximation $\tilde{u}^{405}$ (Left), Exact $u$ (Middle), and Error\\ $\tilde{u}^{405}-u$ (Right) with low noise. Maximum error: $0.0036.$

    \end{minipage}
\end{center}

\begin{center}
    \includegraphics[width=4.2in]{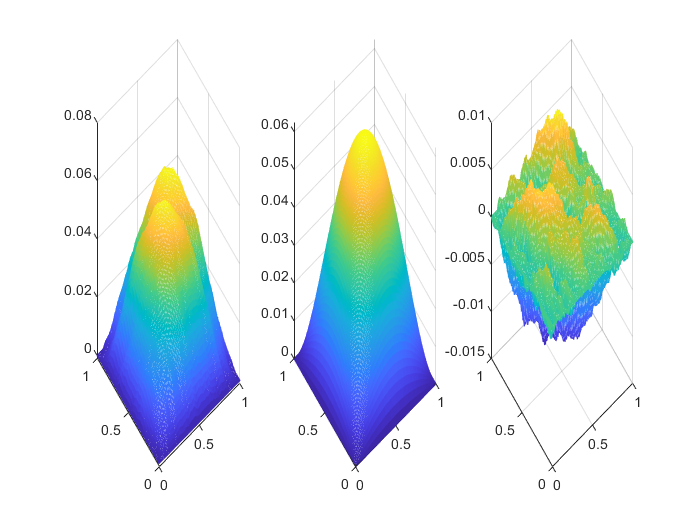}
        \begin{minipage}{15cm}

        \small  Figure 2: Numerical approximation $\tilde{u}^{446}$ (Left), Exact $u$ (Middle), and Error\\ $\tilde{u}^{446}-u$ (Right) with moderate noise. Maximum error: $0.0121.$

    \end{minipage}
\end{center}

\begin{center}
    \includegraphics[width=4.2in]{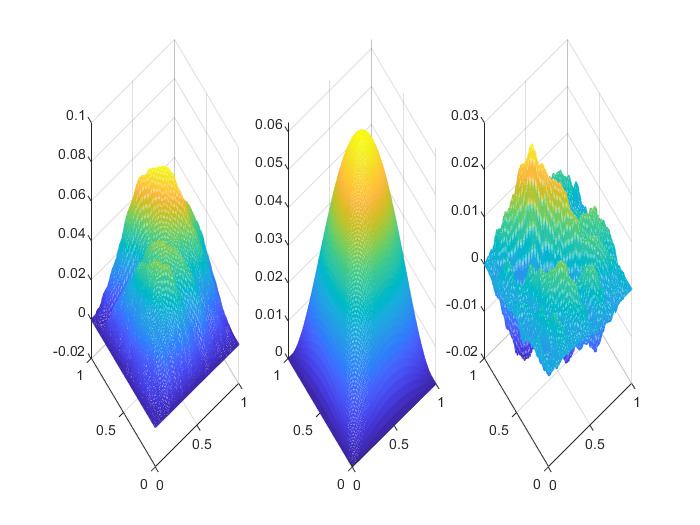}
        \begin{minipage}{15cm}

        \small  Figure 3: Numerical approximation $\tilde{u}^{449}$ (Left), Exact $u$ (Middle), and Error\\ $\tilde{u}^{449}-u$ (Right) with high noise. Maximum error: $0.0262.$

    \end{minipage}
\end{center}

\begin{center}
    \includegraphics[width=4.2in]{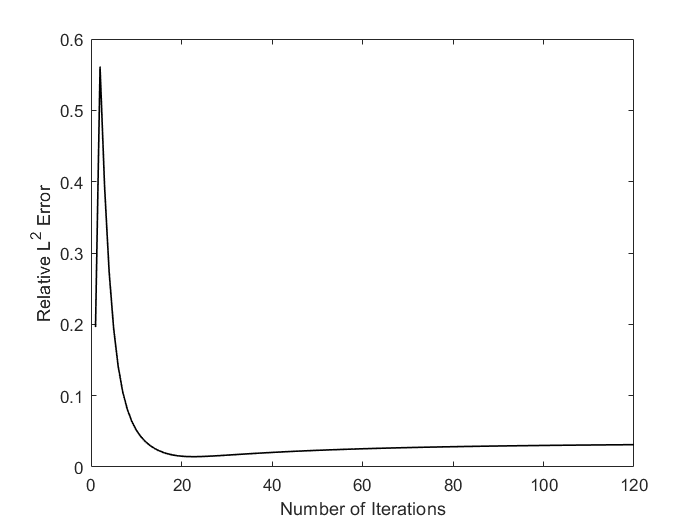}
        \begin{minipage}{15cm}

        \small  Figure 4: Rate of convergence for Algorithm 1 with low noise.

    \end{minipage}
\end{center}

\begin{center}
    \includegraphics[width=4.2in]{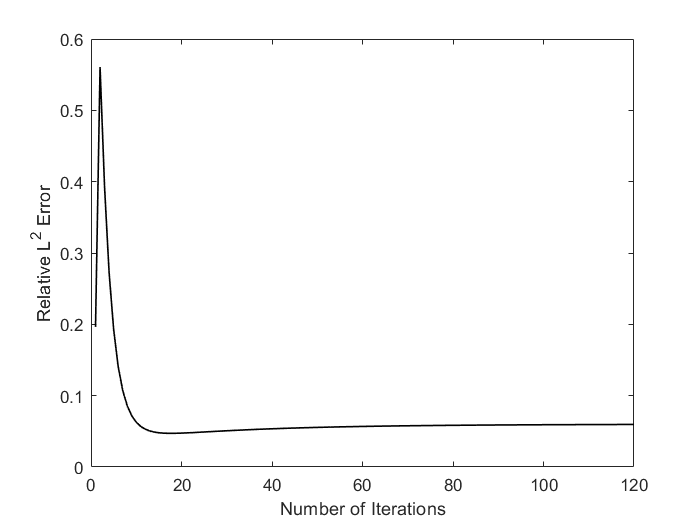}
        \begin{minipage}{15cm}

        \small  Figure 5: Rate of convergence for Algorithm 1 with moderate noise.

    \end{minipage}
\end{center}

\begin{center}
    \includegraphics[width=4.2in]{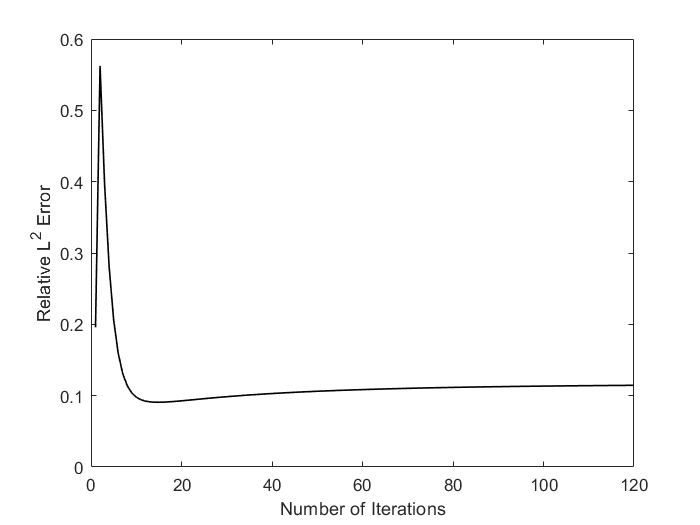}
        \begin{minipage}{15cm}

        \small  Figure 6: Rate of convergence for Algorithm 1 with high noise.

    \end{minipage}
\end{center}

\vspace{1cm}
\noindent
\textbf{Conflict of Interest.} 
The authors declare that they have no conflicts of interest relevant to this article. \\

\noindent
\textbf{Data availability.} This study is theoretical and includes numerical simulations. No external datasets were used. The data generated during the simulations are available from the authors upon reasonable request.

\end{document}